\newcommand{\R}{\mathbb{R}}
\newcommand{\N}{\mathbb{N}}
\newcommand{\ur}[1]{\mathrm{#1}}
\newcommand{\ure}{\ur e}
\newcommand{\eps}{\varepsilon}
\newcommand{\gt}{>}
\newcommand{\lt}{<}
\newcommand{\defs}{\coloneqq}
\newcommand{\sfed}{\eqqcolon}
\newcommand{\ra}{\rightarrow}
\newcommand{\nea}{\nearrow}
\newcommand{\sea}{\searrow}
\newcommand{\ol}{\overline}
\newcommand{\ul}{\underline}
\newcommand{\wh}{\widehat}
\newcommand{\dx}{\,\mathrm{d}x}
\newcommand{\ddt}{\frac{\mathrm{d}}{\mathrm{d}t}}
\newcommand{\embed}{\hookrightarrow}
\newcommand{\hp}{\hphantom}
\newcommand{\pe}{\mathrel{\hp{=}}}
\newcommand{\tmax}{T_{\max}}
\newcommand{\tmaxe}{T_{\max, \eps}}
\newcommand{\intom}{\int_\Omega}
\newcommand{\ombar}{\ol \Omega}
\newcommand{\leb}[1]{{L^{#1}(\Omega)}}
\newcommand{\sob}[2]{{W^{#1, #2}(\Omega)}}
\newcommand{\con}[1]{{C^{#1}(\ombar)}}
\newcommand{\pu}{\mathbbmss p}
\newcommand{\xw}{\left( |x|^{-\frac{(m-1)\alpha}{2}} w^{\frac{p+m-1}{2}} \right)}
\newcommand{\ue}{u_\eps}
\newcommand{\uej}{u_{\eps_j}}
\newcommand{\uet}{u_{\eps t}}
\newcommand{\ve}{v_\eps}
\newcommand{\vej}{v_{\eps_j}}
\newcommand{\vet}{v_{\eps t}}
\newcommand{\Ge}{G_\eps}
\newcommand{\we}{w_\eps}
\newcommand{\ze}{z_\eps}
\renewcommand{\paragraph}[2][.]{\textbf {#2#1}}
\newtheoremstyle{nplain}
  {\topsep}   
  {\topsep}   
  {\itshape}  
  {0pt}       
  {\bfseries} 
  {.}         
  {5pt plus 1pt minus 1pt} 
  {\thmnumber{#2 }\thmname{#1}\thmnote{ (#3)}} 
\newtheoremstyle{ndefinition}
  {\topsep}   
  {\topsep}   
  {}   	      
  {0pt}       
  {\bfseries} 
  {.}         
  {5pt plus 1pt minus 1pt} 
  {\thmnumber{#2 }\thmname{#1}\thmnote{ (#3)}} 
\renewenvironment{proof}[1][\proofname]{\par
  \pushQED{\qed}%
  \normalfont \topsep0\p@\relax
  \trivlist
  \item[\hskip\labelsep\scshape
  #1\@addpunct{.}]\ignorespaces
}{%
  \popQED\endtrivlist\@endpefalse
}
\newtheorem{base}{Base}[section]
\numberwithin{equation}{section}
\theoremstyle{nplain}
\newtheorem{theorem}[base]{Theorem} \newtheorem*{theorem*}{Theroem}
\newtheorem{lemma}[base]{Lemma} \newtheorem*{lemma*}{Lemma}
 \newtheorem*{prop*}{Proposition}
 \newtheorem*{cor*}{Corollary}
\theoremstyle{ndefinition}
 \newtheorem*{definition*}{Definition}
 \newtheorem*{example*}{Example}
 \newtheorem*{cond*}{Condition}
\newtheorem{remark}[base]{Remark} \newtheorem*{remark*}{Remark}
\begin{document}
\setkomafont{title}{\normalfont \Large}
\title{Blow-up profiles in quasilinear fully parabolic Keller--Segel systems}
\author{
Mario Fuest\footnote{fuestm@math.uni-paderborn.de}\\
{\small Institut f\"ur Mathematik, Universit\"at Paderborn,}\\
{\small 33098 Paderborn, Germany}
}
\date{}

\maketitle

\KOMAoptions{abstract=true}
\begin{abstract}
  \noindent
  We examine finite-time blow-up solutions $(u, v)$ to 
  \begin{align} \label{prob:star} \tag{$\star$}
    \begin{cases}
      u_t = \nabla \cdot (D(u, v) \nabla u - S(u, v) \nabla v), \\
      v_t = \Delta v - v + u
    \end{cases}
  \end{align}
  in a ball $\Omega \subset \R^n$, $n \ge 2$,
  where $D$ and $S$ generalize the functions
  \begin{align*}
    D(u, v) = (u+1)^{m-1}
    \quad \text{and} \quad
    S(u, v) = u (u+1)^{q-1}
  \end{align*}
  with $m, q \in \R$.
  We show that if $m \gt \frac{n-2}{n}$ as well as $m-q \gt -\frac1n$
  and $(u, v)$ is a nonnegative, radially symmetric classical solution
  to \eqref{prob:star}
  blowing up at $T_{\textrm{max}} \lt \infty$,
  then there exists a so-called \emph{blow-up profile}
  $U \colon \Omega \setminus \{0\} \ra [0, \infty)$ satisfying
  \begin{align*}
   u(\cdot, t) \ra U \quad \text{in $C_{\textrm{loc}}^2(\ol \Omega \setminus \{0\})$ as $t \nea T_{\textrm{max}}$}.
  \end{align*}
  Moreover, for all $\alpha \gt n$ with
  \begin{align*}
    \alpha \gt \frac{n(n-1)}{(m-q)n + 1}
  \end{align*}
  we can find $C \gt 0$ such that
  \begin{align*}
    U(x) \le C |x|^{-\alpha}
  \end{align*}
  for all $x \in \Omega$.\\[0.5pt]
  \textbf{Key words:} {blow-up profile; nonlinear diffusion; chemotaxis}\\
  \textbf{AMS Classification (2010):} {35B40 (primary); 35K40, 35K65, 92C17 (secondary)}
\end{abstract}

\section{Introduction}
The possibility of (finite-time) blow-up constitutes one of the most striking features of the quasilinear system 
\begin{align} \label{prob:ks} \tag{KS}
  \begin{cases}
    u_t = \nabla \cdot (D(u, v) \nabla u - S(u, v) \nabla v),              & \text{in $\Omega \times (0, T)$}, \\
    v_t = \Delta v - v + u,                                                & \text{in $\Omega \times (0, T)$}, \\
    (D(u, v) \nabla u - S(u, v) \nabla v) \cdot \nu = \partial_\nu v  = 0, & \text{on $\partial \Omega \times (0, T)$}, \\
    u(\cdot, 0) = u_0, v(\cdot, 0) = v_0,                                  & \text{in $\Omega$},
  \end{cases}
\end{align}
proposed by Keller and Segel \cite{KellerSegelTravelingBandsChemotactic1971}
to model chemotaxis, that is, the directed movement of bacteria or cells towards a chemical signal,
and attracting interest of mathematicians for nearly half a century
(see for instance \cite{BellomoEtAlMathematicalTheoryKeller2015} for a recent survey).

Therein $\Omega \subset \R^n$, $n \in \N$, is a smooth, bounded domain,
$T \in (0, \infty]$
and $u_0, v_0 \colon \ombar \ra [0, \infty)$
as well as $D, S \colon [0, \infty]^2 \ra [0, \infty)$  are sufficiently smooth given functions,
the most classical choices being $D \equiv 1$ and $S(u, v) = u$.

For these selections, namely, solutions blowing up in finite time have been constructed in
two- \cite{HerreroVelazquezBlowupMechanismChemotaxis1997} and
higher- \cite{WinklerFinitetimeBlowupHigherdimensional2013} dimensional balls.
On the other hand, if $n = 1$ \cite{OsakiYagiFiniteDimensionalAttractor2001},
if $n = 2$ and $\intom u_0 \lt 4\pi$ (or $\intom u_0 \lt 8\pi$ in the radially symmetric setting)
\cite{NagaiEtAlApplicationTrudingerMoserInequality1997}
or if $n \ge 3$ and $\|u_0\|_{\leb{\frac n2}} + \|v_0\|_{\sob1n}$ is sufficiently small \cite{CaoGlobalBoundedSolutions2014},
all solutions are global in time and remain bounded.
We should also note that if one replaces the second equation in \eqref{prob:ks} by a suitable elliptic counterpart,
finite-time blow-up results have been achieved already in the 1990s
\cite{HerreroEtAlFinitetimeAggregationSingle1997, JagerLuckhausExplosionsSolutionsSystem1992, NagaiBlowupRadiallySymmetric1995}.

Motivated inter alia by the desire to model volume-filling effects,
it has been suggested to consider certain nonlinear functions $D \equiv D(u)$ and $S \equiv S(u)$ instead
\cite{HillenPainterUserGuidePDE2009, PainterHillenVolumefillingQuorumsensingModels2002, WrzosekVolumeFillingEffect2010}
and, in order to account for
immotility in absence of bacteria \cite{FuEtAlStripeFormationBacterial2012, LeyvaEtAlEffectsNutrientChemotaxis2013}
or receptor-binding and saturation effects \cite{KalininEtAlLogarithmicSensingEscherichia2009, HillenPainterUserGuidePDE2009},
one might also (need to) choose functions $D$ and $S$ explicitly depending on $v$.

For the sake of exposition, we will for now confine ourselves with the prototypical choices
$D(u, v) = (u + 1)^{m-1}$ and $S(u, v) = u (u + 1)^{q-1}$ for certain $m, q \in \R$,
but remark that all the works cited below allow for more general choices of $D$ and $S$ as well.

Regarding the question of global-in-time boundedness, the number $\frac{n-2}{n}$ is critical:
If $\Omega \subset \R^n$, $n \in \N$, is a smooth, bounded domain and $m-q \gt \frac{n-2}{n}$,
then all solutions to \eqref{prob:ks} are global in time and bounded
\cite{HorstmannWinklerBoundednessVsBlowup2005, IshidaEtAlBoundednessQuasilinearKeller2014, TaoWinklerBoundednessQuasilinearParabolic2012}.
Conversely, if $\Omega \subset \R^n$, $n \ge 2$, is a ball and $m-q \lt \frac{n-2}{n}$,
there exist initial data such that the corresponding solution blows up in either finite or infinite time
\cite{HorstmannWinklerBoundednessVsBlowup2005, WinklerDoesVolumefillingEffect2009}.

If in addition to $m-q \lt \frac{n-2}{n}$ one assumes $n \ge 3$
as well as either $m \ge 1$ (and hence $q \gt \frac2n \gt 0$) or $m \in \R$ and $q \ge 1$,
finite-time blow-up is possible
\cite{CieslakStinnerFinitetimeBlowupGlobalintime2012, CieslakStinnerFiniteTimeBlowupSupercritical2014, CieslakStinnerNewCriticalExponents2015},
while for $q \le 0$ solutions are always global in time \cite{WinklerGlobalClassicalSolvability2019}.
Whether solutions may blow up in finite time given $m - q \lt \frac{n-2}{2}$ and $q \gt 0$ but $q \lt 1$ or $m \lt 1$
is, to the best of our knowledge, still an open question.

The picture is more complete
if one replaces the second equation in \eqref{prob:ks} with a suitable elliptic equation.
Again solutions are global and bounded provided that $m - q \gt \frac{n-2}{2}$
and in the radial symmetric setting there exist unbounded solutions if $m - q \lt \frac{n-2}{2}$.
Additionally, it is known for which parameters finite-time blow-up may occur:
If $q \le 0$, these solutions are always global, while for $q \gt 0$ finite-time blow-up is possible
\cite{LankeitInfiniteTimeBlowup2017, WinklerDjieBoundednessFinitetimeCollapse2010}.
An obvious conjecture, stated for instance in \cite{WinklerGlobalClassicalSolvability2019},
is that the same holds true for the fully parabolic system \eqref{prob:ks}.

A natural next step is to examine the qualitative behavior of (finite- or infinite-time) blow-up solutions in more detail.
While far from exhaustive, some results in this regard have been obtained for the classical Keller--Segel system, that is,
for $D \equiv 1$ and $S(u, v) = u$.

In the two-dimensional settings some blow-up solutions collapse to a Dirac-type singularity
(see \cite{HerreroVelazquezBlowupMechanismChemotaxis1997, NagaiEtAlChemotacticCollapseParabolic2000}
or also \cite{SenbaSuzukiChemotacticCollapseParabolicelliptic2001} for similar results for the parabolic--elliptic case).
Additionally, for all $n \ge 2$, temporal blow-up rates (even for $S(u, v) = u^q, q \in (0, 2)$) have been established
\cite{MizoguchiSoupletNondegeneracyBlowupPoints2014}
and it is known that $\{u^\frac{n}{2}(\cdot, t) \colon t \in (0, \tmax)\}$ cannot be equi-integrable,
where $\tmax$ denotes the blow-up time \cite{CaoInterpolationInequalityIts2017}.

Quite recently, the questions whether spatial blow-up profiles exist,
that is, whether $U \defs \lim_{t \nea \tmax} u(\cdot, t)$, $\tmax$ again denoting the blow-up time, is meaningful in some sense,
and, if this is indeed the case, properties of $U$
have been studied.

Choosing $\Omega$ to be a ball in two or more dimensions, $D \equiv 1$ and $S(u, v) = u$,
it has been shown in \cite{WinklerBlowupProfilesLife}
that for all nonnegative, radially symmetric solutions blowing up at $\tmax \lt \infty$
there exists a blow-up profile $U$ in the sense that
$u(\cdot, t) \ra U$ in $C_{\mathrm{loc}}^2(\ombar \setminus \{0\})$ as $t \nea \tmax$.
Moreover, an upper estimate is available for $U$: For any $\eta \gt 0$ one can find $C \gt 0$ with
\begin{align*}
  U(x) \le C |x|^{-n(n-1) - \eta} \quad \text{for all $x \in \Omega$}.
\end{align*}

If one simplifies \eqref{prob:ks} by not only setting $D \equiv 1$ and $S(u, v) = u$
but also replacing the second equation therein with $0 = \Delta v - \frac{1}{|\Omega|} \intom u_0 + u$,
more detailed information is available.
In \cite{SoupletWinklerBlowupProfilesParabolicelliptic2018},
the authors consider $\Omega \defs B_R(0) \subset \R^n$, $R \gt 0, n \ge 3$,
and construct a large class of initial data
for which the corresponding solutions $(u, v)$ blow up in finite time.
The blow-up profile $U \defs \lim_{t \nea \tmax} u(\cdot, t)$ exists pointwise and
\begin{align*}
  U(x) \le C |x|^{-2} \quad \text{for all $x \in \Omega$}
\end{align*}
holds for some $C \gt 0$, wherein the exponent $2$ is optimal.
Furthermore, the same paper also provides certain lower bounds for $U$.

Up to now, however, in the case of nonlinear diffusion there seems to be nearly no information available
regarding behavior of finite-time blow-up solutions to \eqref{prob:ks} at their blow-up time.
The present paper aims to be a first step towards closing this gap.

\paragraph{Main results}
At first, we will deal with (a slight generalization of) the first sub-problem in \eqref{prob:ks}
and derive pointwise estimates for its solutions.

\begin{theorem} \label{th:pw_scalar}
  Let $\Omega \subset \R^n$, $n \ge 2$, be a smooth, bounded domain with $0 \in \Omega$
  as well as
  \begin{align} \label{eq:pw_scalar:params}
    m, q \in \R,
    K_{D,1}, K_{D,2}, K_S, K_f, M, L, \beta \gt 0,
    \theta \gt n,
    \pu \ge 1
  \end{align}
  be such that
  \begin{align} \label{eq:pw_scalar:cond_m_q}
    m-q \in \left(\frac{\pu}\theta -\frac{\pu}n, \frac{\pu}{\theta} + \frac{\beta \pu-\pu}{n} \right]
    \quad \text{and} \quad
    m \gt \frac{n-2\pu}{n}.
  \end{align}

  Then for any
  \begin{align} \label{eq:pw_scalar:cond_alpha}
    \alpha \gt\frac{\beta}{m-q + \frac{\pu}{n} - \frac{\pu}{\theta}}
  \end{align}
  we can find $C \gt 0$ with the following property:

  Suppose that for some $T \in (0, \infty]$ the function $u \in C^0(\ombar \times [0, T)) \cap C^{2, 1}(\ombar \times (0, T))$
  is nonnegative, fulfills
  \begin{align} \label{eq:pw_scalar:p}
    \sup_{t \in (0, T)} \intom u^{\pu} \le M
  \end{align}
  and is a classical solution of
  \begin{align} \label{prob:scalar_eq}
    \begin{cases}
      u_t \le \nabla \cdot (D(x, t, u) \nabla u + S(x, t, u) f(x, t)), & \text{in $\Omega \times (0, T)$}, \\
      (D(x, t, u) \nabla u + S(x, t, u) f) \cdot \nu \le 0,            & \text{on $\partial \Omega \times (0, T)$}, \\
      u(\cdot, 0) \le u_0,                                             & \text{in $\Omega$},
    \end{cases}
  \end{align}
  where
  \begin{align} \label{eq:pw_scalar:reg_dsf}
    D, S \in C^1(\ombar \times (0, T) \times [0, \infty)), \quad
    f \in C^1(\ombar \times (0, T); \R^n)
    \quad \text{and} \quad
    u_0 \in \con0
  \end{align}
  satisfy (with $Q_T \defs \Omega \times (0, T)$)
  \begin{align} 
    \label{eq:pw_scalar:D1}
      &\inf_{(x, t) \in Q_T} D(x, t, \rho) \ge K_{D,1} \rho^{m-1}, \\
    \label{eq:pw_scalar:D2}
      &\sup_{(x, t) \in Q_T} D(x, t, \rho) \le K_{D,2} \max\{\rho, 1\}^{m-1}, \\
    \label{eq:pw_scalar:S}
      &\sup_{(x, t) \in Q_T} |S(x, t, \rho)| \le K_S \max\{\rho, 1\}^q
  \end{align}
  for all $\rho \gt 0$
  and
  \begin{align} \label{eq:pw_scalar:f}
   \sup_{t \in (0, T)} \intom |x|^{\theta \beta} |f(x, t)|^\theta \dx \le K_f
  \end{align}
  as well as
  \begin{align} \label{eq:pw_scalar:u0}
    u_0(x) \le L |x|^{-\alpha} \text{ for all $x \in \Omega$.}
  \end{align}
  Then
  \begin{align} \label{eq:pw_scalar:u_est}
    u(x, t) \le C |x|^{-\alpha}
    \quad \text{for all $x \in \Omega$ and $t \in (0, T)$}.
  \end{align}
\end{theorem}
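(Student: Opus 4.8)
The plan is to establish \eqref{eq:pw_scalar:u_est} by proving time-uniform bounds for weighted Lebesgue norms of $u$ and then sending the integrability exponent to infinity. Concretely, since with $w \defs |x|^\alpha u$ the quantity $\xw$ from the preamble equals $|x|^{\frac{\alpha p}{2}} u^{\frac{p+m-1}{2}}$, it is natural to control the family
\[
  y_p(t) \defs \intom |x|^{\alpha p} u^p \dx, \qquad p \ge \pu,
\]
uniformly in $t \in (0,T)$, with a bound that grows at most geometrically in $p$; since $y_p^{1/p} = \||x|^\alpha u(\cdot,t)\|_{\leb p}$ and $\Omega$ is bounded, this forces $\sup_{t} \||x|^\alpha u(\cdot,t)\|_{L^\infty} \le C$, which is exactly \eqref{eq:pw_scalar:u_est}. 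The base level $p = \pu$ is supplied by the mass bound \eqref{eq:pw_scalar:p}, and the initial value $y_p(0)$ is finite for every $p$ thanks to \eqref{eq:pw_scalar:u0}. The whole scheme is a Moser-type iteration, and I would carry out the testing rigorously by first regularizing the singular weight $|x|$ near the origin and passing to the limit at the end.

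Second, I would derive the differential inequality driving the iteration by testing \eqref{prob:scalar_eq} with $|x|^{a} u^{p-1}$, the exponent $a = a(p)$ being chosen so that the diffusion-induced dissipation takes the form $-c(p) \intom |\nabla \xw|^2$. Upon integrating by parts, the boundary integral is nonpositive by the no-flux condition in \eqref{prob:scalar_eq}, the genuinely dissipative term is produced by the lower bound \eqref{eq:pw_scalar:D1} on $D$ (using $u^{p+m-3}|\nabla u|^2 \sim |\nabla u^{\frac{p+m-1}{2}}|^2$), and the cross term coming from differentiating the weight is absorbed into this dissipation by Young's inequality together with the upper bound \eqref{eq:pw_scalar:D2}.

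Third --- and this is the heart of the matter --- I would estimate the drift term $\intom \nabla(|x|^a u^{p-1}) \cdot S f$. Here the growth bound \eqref{eq:pw_scalar:S} on $S$ reduces matters to integrals of the schematic form $\intom |x|^{?} u^{?} |\nabla u|\,|f|$ and $\intom |x|^{?} u^{?} |f|$; applying H\"older's inequality with exponent $\theta$ isolates the factor $|x|^{\theta\beta}|f|^\theta$, which is bounded by $K_f$ via \eqref{eq:pw_scalar:f}, from the remaining powers of $u$ and $\nabla u$. After splitting off the gradient with Young's inequality and invoking a weighted Gagliardo--Nirenberg--Sobolev inequality, the surviving powers of $u$ can be dominated by a small multiple of $\intom |\nabla \xw|^2$ plus a multiple of a \emph{lower}-level quantity $y_{p'}$ with $p' \lt p$. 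This yields, for each $p$, an autonomous differential inequality for $y_p$ whose right-hand side is controlled once the preceding level is known, so the bounds propagate upward through the iteration.

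The step I expect to be most delicate is the uniform (in $p$) bookkeeping in this drift estimate: because $f$ is only controlled in the weighted $L^\theta$-sense rather than pointwise, the H\"older/interpolation balance must be arranged so that neither the constants nor the interpolation exponents degrade as $p \ra \infty$. It is precisely here that the hypotheses are used --- the restriction $m \gt \frac{n-2\pu}{n}$ in \eqref{eq:pw_scalar:cond_m_q} guarantees that the Sobolev boost raises the integrability level strictly above the base exponent $\pu$ (so that the first step is admissible), the window for $m-q$ in \eqref{eq:pw_scalar:cond_m_q} keeps the power of $u$ in the drift term within reach of the dissipation, and the lower bound \eqref{eq:pw_scalar:cond_alpha} on $\alpha$ is exactly what makes the weighted H\"older estimate against $\intom |x|^{\theta\beta}|f|^\theta$ closeable. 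With the per-step gain in integrability bounded below by a factor exceeding $1$, a standard Moser iteration then converges and yields the asserted pointwise bound \eqref{eq:pw_scalar:u_est}.
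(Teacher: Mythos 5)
Your proposal follows essentially the same route as the paper's proof: testing with $|x|^{\alpha p}u^{p-1}$ (equivalently $|x|^\alpha w^{p-1}$ with $w=|x|^\alpha u$), isolating $f$ via H\"older's inequality with exponent $\theta$ against the weighted bound \eqref{eq:pw_scalar:f}, absorbing the remaining terms into the dissipation $\intom|\nabla\xw|^2$ by a Gagliardo--Nirenberg/Ehrling argument anchored at the mass bound \eqref{eq:pw_scalar:p}, and closing with a Moser-type iteration in which each level is controlled by a strictly lower one --- which is exactly the structure of Lemmas~\ref{lm:ddt_wp}--\ref{lm:w_bdd_l_infty}. You also correctly identify where each hypothesis enters (in particular $m\gt\frac{n-2\pu}{n}$ for the Sobolev step and \eqref{eq:pw_scalar:cond_alpha} for closing the weighted H\"older estimate, which is the content of Lemma~\ref{lm:params}), so the plan is sound and matches the paper's argument.
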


\begin{remark} \label{rm:pu_1}
  For $\pu = 1$ the condition \eqref{eq:pw_scalar:p} in Theorem~\ref{th:pw_scalar} can be replaced by
  \begin{align*}
    \intom u_0 \le M
  \end{align*}
  as integrating the PDI in \eqref{prob:scalar_eq} over $\Omega$
  and integrating by parts (all boundary terms are nonpositive because of the second condition in \eqref{prob:scalar_eq})
  assert $\intom u(\cdot, t) \le \intom u_0$ for all $t \in (0, \tmax)$.
\end{remark}

As a second step, we then apply this result to radially symmetric solutions to \eqref{prob:ks} and obtain
\begin{theorem} \label{th:pw_ks}
  Let $n \ge 2$, $R \gt 0$ and $\Omega \defs B_R(0)$
  as well as
  \begin{align} \label{eq:pw_ks:params}
    m, q \in \R,
    K_{D,1}, K_{D,2}, K_S, \gt 0, M, L \gt 0
  \end{align}
  such that
  \begin{align} \label{eq:pw_ks:cond_m_q}
    m-q \in \left(-\frac1n, \frac{n-2}{n} \right]
    \quad \text{and} \quad
    m \gt \frac{n-2}{n}.
  \end{align}

  For any
  \begin{align} \label{eq:pw_ks:cond_alpha}
    \alpha \gt \ul \alpha \defs \frac{n (n-1)}{(m-q)n + 1}
  \end{align}
  and any $\beta \gt n-1$,
  there exists $C \gt 0$ with the following property:
  Let $T \in (0, \infty]$.
  Any nonnegative and radially symmetric classical solution
  $(u, v) \in \left( C^0(\ombar \times [0, T)) \cap C^{2, 1}(\ombar \times (0, T))\right)^2$ of \eqref{prob:ks}
  fulfills
  \eqref{eq:pw_scalar:u_est} and $|\nabla v(x, t)| \le C |x|^{-\beta}$
  for all $x \in \Omega$ and $t \in (0, T)$,
  provided
  \begin{align} \label{eq:pw_ks:reg}
    D, S \in C^1([0, \infty)^2), \quad
    u_0 \in \con 0
    \quad \text{and} \quad
    v_0 \in \sob1\infty
  \end{align}
  satisfy
  \begin{align} 
    \label{eq:pw_ks:D1}
      &\inf_{\sigma \ge 0} D(\rho, \sigma) \ge K_{D,1} \rho^{m-1}, \\
    \label{eq:pw_ks:D2}
      &\sup_{\sigma \ge 0} D(\rho, \sigma) \le K_{D,2} \max\{\rho, 1\}^{m-1} \quad \text{and} \\
    \label{eq:pw_ks:S}
      &\sup_{\sigma \ge 0} |S(\rho, \sigma)| \le K_S \max\{\rho, 1\}^q
  \end{align}
  for all $\rho \ge 0$
  as well as \eqref{eq:pw_scalar:u0},
  \begin{align} \label{eq:pw_ks:v_0}
    \intom u_0 \le M
    \quad \text{and} \quad
    \|v_0\|_{\sob1\infty} \le L.
  \end{align}
\end{theorem}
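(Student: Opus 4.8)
The plan is to apply Theorem~\ref{th:pw_scalar} to the first equation of \eqref{prob:ks}, treating the second solution component as a coefficient. Concretely, I set $f \defs -\nabla v$ and, writing the $v$-dependence of diffusion and sensitivity as a dependence on $(x,t)$, put $\tilde D(x,t,\rho) \defs D(\rho, v(x,t))$ and $\tilde S(x,t,\rho) \defs S(\rho, v(x,t))$, so that the first line of \eqref{prob:ks} becomes the equality version of \eqref{prob:scalar_eq}. Since $D, S \in C^1([0,\infty)^2)$ and $v \in C^{2,1}$, the regularity \eqref{eq:pw_scalar:reg_dsf} holds, and \eqref{eq:pw_ks:D1}--\eqref{eq:pw_ks:S} turn into \eqref{eq:pw_scalar:D1}--\eqref{eq:pw_scalar:S} upon taking the infimum respectively supremum over the range of $v \ge 0$. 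The no-flux and initial conditions in \eqref{prob:ks} give the corresponding inequalities in \eqref{prob:scalar_eq}, and \eqref{eq:pw_scalar:u0} is exactly the assumed bound on $u_0$. Throughout I intend to work with $\pu = 1$: integrating the first equation over $\Omega$ and using the no-flux condition yields $\intom u(\cdot, t) \le \intom u_0 \le M$, so that \eqref{eq:pw_scalar:p} holds in the form provided by Remark~\ref{rm:pu_1}; integrating the second equation likewise gives $\sup_{t} \intom v(\cdot, t) \le \max\{\intom v_0, M\}$, a bound I will need below.

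Two things then remain: to produce the gradient bound feeding \eqref{eq:pw_scalar:f} (which is also the second assertion of the theorem), and to choose $\theta$ so that the parameter conditions of Theorem~\ref{th:pw_scalar} are met. For the former I would first prove the pointwise estimate $|\nabla v(x,t)| \le C|x|^{-\beta_0}$ for some $\beta_0 > n-1$ fixed close to $n-1$. Granting this, \eqref{eq:pw_scalar:f} with weight exponent $\beta_0$ is immediate, since then $|x|^{\theta \beta_0}|\nabla v|^\theta \le C^\theta$ and hence $\sup_t \intom |x|^{\theta\beta_0}|\nabla v|^\theta \le C^\theta |\Omega| \sfed K_f$; moreover, as $\Omega$ is bounded, $|x|^{-\beta_0} \le R^{\beta - \beta_0}|x|^{-\beta}$ for every $\beta \ge \beta_0$, so the estimate $|\nabla v| \le C|x|^{-\beta}$ claimed for the given (arbitrary) $\beta > n-1$ follows a fortiori. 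To establish the pointwise gradient bound I would use the variation-of-constants representation $v(\cdot,t) = e^{t(\Delta-1)}v_0 + \int_0^t e^{(t-s)(\Delta-1)}u(\cdot,s)\ds$ together with the smoothing properties of the Neumann heat semigroup and the mass bound $\|u(\cdot,s)\|_{\leb1} \le M$. Splitting the spatial convolution at $|y| = |x|/2$, the part carried by mass near the origin is damped by the Gaussian factor $e^{-c|x|^2/(t-s)}$ and, after integration in time, contributes the expected order $|x|^{-(n-1)}$; the self-interaction part is where radial symmetry is essential, as the field generated at radius $|x|$ by a radially symmetric density is controlled by the enclosed mass rather than being pointwise singular.

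I expect this gradient estimate to be the main obstacle. The naive radial identity $r^{n-1}v_r(r,t) = \int_0^r \rho^{n-1}(v_t + v - u)(\rho,t)\drho$, obtained from integrating $(\rho^{n-1}v_r)_\rho = \rho^{n-1}\Delta v$, reduces the $u$- and $v$-contributions to their ($L^1$-bounded) masses but leaves the term $\int_0^r \rho^{n-1}v_t\,\drho = \partial_t \int_0^r \rho^{n-1}v\,\drho$, whose pointwise-in-time control cannot be read off from the boundedness of $\int_0^r \rho^{n-1}v\,\drho$ alone. This is precisely the point at which radial symmetry, via the semigroup representation, must be exploited, and it is also where the strict inequality $\beta > n-1$ earns its keep: it provides the room to absorb the borderline (logarithmically singular) time integration that a bound at the critical exponent $n-1$ would face.

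Finally I would carry out the parameter bookkeeping. With $\pu = 1$ the conditions \eqref{eq:pw_scalar:cond_m_q} read $m - q \in (\frac1\theta - \frac1n, \frac1\theta + \frac{\beta_0 - 1}{n}]$ and $m > \frac{n-2}{n}$. The second is assumed; for the first, the hypothesis $m - q > -\frac1n$ lets me pick $\theta$ large enough that $\frac1\theta - \frac1n < m-q$, while $\frac{\beta_0 - 1}{n} > \frac{n-2}{n} \ge m - q$ (using $\beta_0 > n-1$) secures the upper endpoint. It remains to meet \eqref{eq:pw_scalar:cond_alpha}, i.e.\ $\alpha > \frac{\beta_0}{m - q + \frac1n - \frac1\theta}$. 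As $\theta \ra \infty$ and $\beta_0 \sea n-1$ the right-hand side tends to $\frac{n-1}{m-q+\frac1n} = \frac{n(n-1)}{(m-q)n+1} = \ul\alpha$; since the hypothesis grants $\alpha > \ul\alpha$, I can fix $\beta_0 > n-1$ close enough to $n-1$ and $\theta$ large enough that all of the above hold simultaneously. Theorem~\ref{th:pw_scalar} (with weight exponent $\beta_0$) then yields \eqref{eq:pw_scalar:u_est}, completing the proof.
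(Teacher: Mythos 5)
Your reduction to Theorem~\ref{th:pw_scalar} (with $\pu = 1$, $f \defs -\nabla v$, $\tilde D(x,t,\rho) \defs D(\rho,v(x,t))$, $\tilde S(x,t,\rho) \defs S(\rho,v(x,t))$) and your parameter bookkeeping at the end coincide with what the paper does (via Lemma~\ref{lm:pw_ks_g}). The gap is in the step you yourself flag as the main obstacle: you propose to prove the \emph{pointwise} bound $|\nabla v(x,t)| \le C|x|^{-\beta_0}$ \emph{first}, using only the mass bound $\|u(\cdot,s)\|_{\leb1}\le M$, radial symmetry and semigroup smoothing, and then feed it into \eqref{eq:pw_scalar:f}. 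No such estimate is available at that stage: $L^1$ control of a radial source alone does not yield an $L^\infty$ bound on $\nabla v$ away from the origin. The obstruction is that the Duhamel contributions are singular when the mass of $u(\cdot,s)$ concentrates on an annulus near the evaluation radius, and this annulus may move with $s$: taking $u(\cdot,s)$ to be (a smooth approximation of) a shell of mass $M$ at radius $|x| + \sqrt{t-s}$, each time slice contributes a gradient of size roughly $\frac{M}{|x|^{n-1}(t-s)}$ at $x$, all with the same sign, and $\int_0^t \frac{\ds}{t-s}$ diverges. Your shell-theorem intuition (``the field at radius $|x|$ is controlled by the enclosed mass'') is an elliptic fact; the time-integrated heat kernel reproduces it only for sources concentrated at a \emph{fixed} radius, not for time-dependent ones. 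Nor does the slack $\beta_0 \gt n-1$ help: the divergence sits at the evaluation radius itself and is insensitive to the spatial weight near the origin.

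This is precisely why the paper's Lemma~\ref{lm:bdd_v} (quoted from \cite[Lemma~3.4]{WinklerBlowupProfilesLife}) is stated in two regimes: for finite $\theta$ the weighted bound $\sup_t \intom |x|^{\theta\beta}|\nabla v|^\theta \le C$ does follow from the mass bound alone, whereas the pointwise conclusion ($\theta = \infty$) additionally \emph{requires} a pointwise majorant $g(x,t)\le K|x|^{-\tilde\alpha}$ for the source. The repair is to reverse your order, which is what the paper does in Lemma~\ref{lm:pw_ks_g}: first apply Lemma~\ref{lm:bdd_v} with finite $\theta \gt n$ and $\tilde\beta \in (n-1,\beta)$ to obtain exactly the hypothesis \eqref{eq:pw_scalar:f} --- note that Theorem~\ref{th:pw_scalar} never needs a pointwise bound on $f$, only a weighted $L^\theta$ bound with finite $\theta$; then invoke Theorem~\ref{th:pw_scalar} to get \eqref{eq:pw_scalar:u_est}; and only then apply Lemma~\ref{lm:bdd_v} with $\theta=\infty$, using the just-proved pointwise bound on $u$ as the required source majorant, to conclude $|\nabla v(x,t)|\le C|x|^{-\beta}$. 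With that reordering, your choices of $\theta$ large and $\beta_0$ (alias $\tilde\beta$) close to $n-1$, and your verification of \eqref{eq:pw_scalar:cond_m_q} and \eqref{eq:pw_scalar:cond_alpha}, go through verbatim.
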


\begin{remark}
  \begin{enumerate}
    \item[(i)]
      Let us briefly discuss the conditions in \eqref{eq:pw_ks:cond_m_q}.
      On the one hand, observe that $m-q \sea -\frac1n$ implies $\ul \alpha \nea \infty$.
      On the other hand, \cite{TaoWinklerBoundednessQuasilinearParabolic2012} proves that all solutions to \eqref{prob:ks}
      for a large class of functions $D, S$ are global in time and bounded,
      provided $m, q \in \R$ satisfy $m-q \gt \frac{n-2}{n}$.
      In both cases a statement of the form \eqref{eq:pw_scalar:u_est} would not be very interesting.
      (However, for $m-q \gt \frac{n-2}{n}$ the statement still holds if one sets $\ul \alpha \defs n$
      because if \eqref{eq:pw_scalar:S} is fulfilled for some $q \in \R$ then also for all larger $q$.)

      The second condition in \eqref{eq:pw_ks:cond_m_q}, however, is purely needed for technical reasons
      and we conjecture that Theorem~\ref{th:pw_ks} holds even without this restriction,
      albeit the constant $C$ may then depend on $T$ as well.
      
    \item[(ii)]
      In \cite[Corollary~2.3]{FreitagBlowupProfilesRefined2018},
      it has been shown that \eqref{eq:pw_scalar:u_est}
      cannot hold for any
      \begin{align*}
        \alpha \lt \ol \alpha \defs \min\left\{
          \frac2{(1+q-m)_+},
          \frac1{(q-m)_+}
        \right\}.
      \end{align*}
      As $m-q \lt \frac{n-2}{n}$ implies $\ul \alpha \gt n \gt \ol \alpha$,
      we do not know whether \eqref{eq:pw_ks:cond_alpha} is in general optimal.
      However, in the case of $m-q = \frac{n-2}{n}$ (and $m \gt \frac{n-2}{n}$) we have $\ul \alpha = n = \ol \alpha$,
      hence at least in this extremal case the condition $\alpha \gt \ol \alpha$ is, up to equality, optimal.
  \end{enumerate}
\end{remark}

The third and final step will then consist of proving that $\lim_{t \nea \tmax} u(\cdot, t)$ and $\lim_{t \nea \tmax} v(\cdot, t)$
exist in an appropriate sense provided the diffusion mechanism in the first equation in \eqref{prob:ks} is nondegenerate.
\begin{theorem} \label{th:profile_ks}
  Let $n \ge 2$, $R \gt 0$, $\Omega \defs B_R(0)$
  and suppose that the parameters in \eqref{eq:pw_ks:params} and the functions in \eqref{eq:pw_ks:reg}
  comply with \eqref{eq:pw_scalar:u0}, \eqref{eq:pw_ks:cond_m_q} and \eqref{eq:pw_ks:D1} -- \eqref{eq:pw_ks:v_0}.
  Furthermore, suppose also that there is $\eta \gt 0$ with
  \begin{align} \label{eq:profile_ks:non_degen}
    D \ge \eta
    \quad \text{in $[0, \infty)^2$}.
  \end{align}

  Then for any nonnegative and radially symmetric classical solution
  $(u, v)$ blowing up in finite time
  in the sense that there is $\tmax \lt \infty$ such that
  \begin{align*}
    \limsup_{t \nea \tmax} \|u(\cdot, t)\|_{\leb \infty} = \infty,
  \end{align*}
  there exist nonnegative, radially symmetric $U, V \in C^2(\Omega \setminus \{0\})$ such that
  \begin{align} \label{eq:profile_ks:u_v_conv}
    u(\cdot, t) \ra U
    \quad \text{and} \quad
    v(\cdot, t) \ra V
    \qquad \text{in $C_{\mathrm{loc}}^2(\ol \Omega \setminus \{0\})$ as $t \nea \tmax$}.
  \end{align}
  Moreover, for any $\alpha \gt \ul \alpha$ (with $\ul \alpha$ as in \eqref{eq:pw_ks:cond_alpha}) and any $\beta \gt n-1$
  we can find $C \gt 0$ with the property that
  \begin{align} \label{eq:profile_ks:profile}
    U(x) \le C |x|^{-\alpha}
    \quad \text{and} \quad
    |\nabla V(x)| \le C |x|^{-\beta}
    \qquad \text{for all $x \in \Omega$}.
  \end{align}
\end{theorem}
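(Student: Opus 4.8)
The plan is to combine the uniform spatial decay estimates already furnished by Theorem~\ref{th:pw_ks} with parabolic regularity away from the origin and a soft weak-convergence argument to identify the limits. First I would invoke Theorem~\ref{th:pw_ks}, whose hypotheses are exactly those assumed here, to fix, for the given $\alpha \gt \ul\alpha$ and $\beta \gt n-1$, a constant $C \gt 0$ with
\[
  u(x, t) \le C |x|^{-\alpha}
  \quad\text{and}\quad
  |\nabla v(x, t)| \le C |x|^{-\beta}
  \qquad \text{for all $x \in \Omega$ and $t \in (0, \tmax)$,}
\]
where, crucially, $C$ does not depend on $t$. In particular, for every $\delta \in (0, R)$ the pair $(u, v)$ is bounded on $(\ombar \setminus B_\delta(0)) \times (0, \tmax)$, so that the assumed blow-up is confined to the origin; once convergence has been shown, letting $t \nea \tmax$ pointwise in these inequalities will immediately give the bounds \eqref{eq:profile_ks:profile} for $U$ and $V$.

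Next I would establish uniform-in-time higher regularity on $K_\delta \defs \ombar \setminus B_\delta(0)$ for each $\delta \in (0, R)$. On $K_\delta \times (0, \tmax)$ the functions $u$ and $\nabla v$ are bounded and, by the nondegeneracy assumption \eqref{eq:profile_ks:non_degen} together with the upper bound \eqref{eq:pw_ks:D2} and the boundedness of $u$, the diffusion coefficient obeys $\eta \le D(u, v) \le C'$ there. Reading the first equation in the divergence form $u_t = \nabla \cdot (D(u, v) \nabla u - S(u, v) \nabla v)$ as a uniformly parabolic equation with bounded principal part and bounded inhomogeneity $S(u, v) \nabla v$, interior and (via the no-flux condition) boundary Hölder estimates provide a uniform bound for $u$ in $C^{\vt, \vt/2}(K_\delta \times (t_0, \tmax))$ for some $\vt \in (0, 1)$ and any $t_0 \in (0, \tmax)$, where the estimate up to $K_\delta$ is obtained by applying the interior bounds on $K_{\delta/2}$. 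Then $D(u, v)$ and $S(u, v)$ are Hölder continuous, the linear equation $v_t = \Delta v - v + u$ furnishes a uniform $C^{2+\vt, 1+\vt/2}$ bound for $v$, and a standard Schauder bootstrap upgrades $u$ to a uniform-in-time bound in $C^{2+\vt, 1+\vt/2}(K_\delta \times (t_0, \tmax))$. By Arzelà--Ascoli, the families $\{u(\cdot, t)\}$ and $\{v(\cdot, t)\}$ are therefore precompact in $C^2(K_\delta)$ for every $\delta$, hence in $C^2_\loc(\ombar \setminus \{0\})$.

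To turn precompactness into genuine convergence it remains to single out a unique limit. For an arbitrary $\varphi \in C_c^\infty(\ombar \setminus \{0\})$ (not required to vanish on $\partial\Omega$) I would use the first equation and integrate by parts to get
\[
  \ddt \intom u(\cdot, t)\, \varphi
  = - \intom \bigl( D(u, v) \nabla u - S(u, v) \nabla v \bigr) \cdot \nabla \varphi ,
\]
the boundary term vanishing by the no-flux condition in \eqref{prob:ks}. Since $\nabla\varphi$ is supported away from the origin, the right-hand side is bounded uniformly in $t$ by the estimates above, so $t \mapsto \intom u(\cdot, t)\,\varphi$ is Lipschitz on $(t_0, \tmax)$ and, as $\tmax \lt \infty$, has a limit as $t \nea \tmax$. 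Every $C^2_\loc$-subsequential limit of $u(\cdot, t)$ must agree with $\varphi \mapsto \lim_{t \nea \tmax} \intom u(\cdot, t)\,\varphi$, so precompactness forces $u(\cdot, t) \ra U$ in $C^2_\loc(\ombar \setminus \{0\})$ for a single $U$. The identical argument applied to $\ddt \intom v(\cdot, t)\,\psi = \intom (\Delta v - v + u)\psi$ yields $v(\cdot, t) \ra V$ in $C^2_\loc(\ombar \setminus \{0\})$. Nonnegativity, radial symmetry and $C^2$-regularity of $U$ and $V$ pass to the limit, completing \eqref{eq:profile_ks:u_v_conv}, and the pointwise bounds \eqref{eq:profile_ks:profile} follow as noted above.

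I expect the decisive difficulty to lie in the uniform-in-time regularity bootstrap: one must guarantee that the Hölder and Schauder constants stay bounded as $t \nea \tmax$, which is precisely where the nondegeneracy \eqref{eq:profile_ks:non_degen} (ensuring uniform parabolicity on each $K_\delta$) and the $t$-independence of the decay estimates from Theorem~\ref{th:pw_ks} are indispensable. By contrast, the final identification of the limits is comparatively soft, resting only on compactness together with the one-sided Lipschitz control of $t \mapsto \intom u(\cdot, t)\,\varphi$.
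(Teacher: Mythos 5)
Your argument is correct, but it follows a genuinely different route from the paper's. The paper never works with $(u,v)$ near $t = \tmax$ directly: it regularizes \eqref{prob:ks} by truncating the sensitivity and the source with $\Ge$, obtains \emph{global} solutions $(\ue,\ve)$ (Lemmas~\ref{lm:ue_ve_local_ex}--\ref{lm:ue_ve_global_ex}), proves $\eps$-uniform $C^{2+\gamma,1+\gamma/2}$ bounds away from the origin (Lemma~\ref{lm:ue_ve_c2_bdd}, using exactly the Porzio--Vespri/Lieberman/Schauder package you cite), passes to a limit $(\wh u,\wh v)$ along $\eps_j \sea 0$ (Lemma~\ref{lm:ue_ve_conv}), and identifies $\wh u = u$, $\wh v = v$ on $[0,\tmax)$ via the uniqueness result of Lemma~\ref{lm:unique}; then $U \defs \wh u(\cdot,\tmax)$, $V \defs \wh v(\cdot,\tmax)$, and \eqref{eq:profile_ks:u_v_conv} follows from continuity of $\wh u,\wh v$ in time across $\tmax$. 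You instead run the same regularity machinery directly on $(u,v)$ over $(\ombar\setminus B_\delta(0))\times(t_0,\tmax)$ --- which is legitimate, since interior-type parabolic estimates hold up to the terminal time slice and all structural constants (uniform parabolicity $\eta \le D(u,v)$ from \eqref{eq:profile_ks:non_degen}, the $t$-independent decay bounds of Theorem~\ref{th:pw_ks}) do not deteriorate as $t \nea \tmax$ --- and then identify the limit through the Lipschitz bound on $t \mapsto \intom u(\cdot,t)\varphi$. Your route is shorter and more self-contained: it needs neither the approximation scheme nor the appendix's uniqueness lemma, and it uses the nondegeneracy assumption only for uniform parabolicity. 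What the paper's detour buys is information beyond the statement of the theorem: since $(\ue,\ve)$ are global, the limit $(\wh u,\wh v)$ is defined on all of $(0,\infty)$, so one obtains a continuation of the solution past the blow-up time (``life beyond blow-up''), and Lemma~\ref{lm:unique} is of independent interest; moreover, $\tmax$ is then an interior time for every object involved, so no estimates at a terminal time are ever required. Two details you should make explicit: in the bootstrap for $u$, a gradient-H\"older step (Lieberman's conormal estimate) must precede the Schauder step, because the $u$-equation is quasilinear in divergence form --- the paper's Lemma~\ref{lm:ue_ve_c2_bdd} records this order --- and once the uniform $C^{2+\vt,1+\vt/2}$ bound up to $\tmax$ is in hand, the H\"older-in-time control of $D^2_x u$ already makes $u(\cdot,t)$ Cauchy in $C^2_\loc$ as $t \nea \tmax$, so your weak identification step, though correct, could even be dispensed with.
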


\begin{remark}
  Obviously, Theorem~\ref{th:profile_ks} is only of interest if, given $S$ and $D$,
  there are indeed initial data leading to finite-time blow-up.
  Therefore, we stress that, for instance,
  the choices $D(\rho, \sigma) \defs (\rho + 1)^{m-1}$ and $S(\rho, \sigma) \defs \rho (\rho + 1)^{q-1}$ for $\rho, \sigma \ge 0$
  and $m \in \R, q \ge 0$ satisfying \eqref{eq:pw_ks:cond_m_q} as well as $q \ge 1$ or $m \ge 1$
  not only comply with \eqref{eq:pw_ks:reg} -- \eqref{eq:pw_ks:S} and \eqref{eq:profile_ks:non_degen} for certain parameters
  but also allow for finite-time blow-up
  \cite{CieslakStinnerFinitetimeBlowupGlobalintime2012, CieslakStinnerNewCriticalExponents2015}. 
  That is, there exist initial data $(u_0, v_0) \in \con0 \times \sob1\infty$ such that the corresponding solution to \eqref{prob:ks}
  blows up in finite time.
  As \eqref{eq:pw_scalar:u0} and \eqref{eq:pw_ks:v_0} are then obviously fulfilled for certain $L, M \gt 0$,
  we may indeed apply Theorem~\ref{th:profile_ks}.
\end{remark}

\begin{remark}
  As a final remark, let us point out that Theorem~\ref{th:profile_ks}
  includes the result in \cite[Corollary~1.4]{WinklerBlowupProfilesLife},
  as in the case of $m=1$ and $q=1$ we have $\ul \alpha = n(n-1)$.
\end{remark}

\paragraph{Plan of the paper}
The reasoning from \cite{WinklerBlowupProfilesLife},
where estimates on blow-up profiles to solutions to \eqref{prob:ks} with $D \equiv 1$ and $S(u, v) = u$ have been derived,
is to consider $w \defs \zeta^\alpha u$ with $\zeta(x) \approx |x|$
and to make use of semi-group arguments as well as $L^p$-$L^q$ estimates in order to derive an $L^\infty$ bound for $w$
which in turn implies the desired estimate of the form \eqref{eq:pw_scalar:u_est} for $u$.
However, through their mere nature, these methods are evidently inadequate to handle equations with nonlinear diffusion.

The present paper is built upon the belief that, generally,
an iterative testing procedure should be as strong as semi-group arguments.
While the latter method may be quite elegant,
the former has the distinct advantage of being applicable not only to equations with linear diffusion
but also to \eqref{prob:scalar_eq}.

Indeed, iteratively testing with $w^{p_j-1}$ for certain $1 \le p_j \nea \infty$
allows us to obtain an $L^\infty$ bound for $w$ at the end of Section~\ref{sec:pw_scalar}---%
provided the critical assumption \eqref{eq:pw_scalar:cond_alpha} is fulfilled.

Applying Theorem~\ref{th:pw_scalar} to solutions of \eqref{prob:ks} mainly consists of adequately estimating $f \defs -\nabla v$.
To that end we may basically rely on the results in \cite{WinklerBlowupProfilesLife}.
It probably should also be noted that this is the only part where we explicitly make use of the radially symmetric setting.

Finally, the existence of blow-up profiles
is shown in Section~\ref{sec:ex_profiles}
by considering global solutions $(\ue, \ve)$, $\eps \in (0, 1)$, to suitably approximative problems
which converge (along a subsequence) on all compact sets in $\ombar \setminus \{0\} \times (0, \infty)$ to $(\wh u, \wh v)$
for certain functions $\wh u, \wh v \colon \ombar \times [0, \infty) \ra [0, \infty)$.
We then prove that these functions coincide which $u$ and $v$ on $\ombar \times [0, \tmax)$
such that we may set $U \defs \wh u(\cdot, \tmax)$ as well as $V \defs \wh v(\cdot, \tmax)$
and make use of regularity of $\wh u$ and $\wh v$.

In order to identify $(\wh u, \wh v)$ with $(u, v)$ we crucially need uniqueness of solutions to \eqref{prob:ks}
which we show in Lemma~\ref{lm:unique}---provided that the first equation is nondegenerate.
As this might potentially be of independent interest,
we choose to prove uniqueness for a class of systems slightly generalizing \eqref{prob:ks}.

\section{Pointwise estimates for subsolutions to parabolic equations in divergence form} \label{sec:pw_scalar}
Unless otherwise stated, we assume throughout this section
that $\Omega \subset \R^n$, $n \ge 2$, is a smooth, bounded domain with $0 \in \Omega$,
set $R \defs \sup_{x \in \Omega} |x|$
and suppose that the parameters (all henceforth fixed) in \eqref{eq:pw_scalar:params} as well as $\alpha$
comply with \eqref{eq:pw_scalar:cond_m_q} and \eqref{eq:pw_scalar:cond_alpha}.
Moreover, we may also assume
\begin{align} \label{eq:cond_m_q_alpha_beta}
  (m-q) \alpha \lt \beta,
\end{align}
since whenever \eqref{eq:pw_scalar:f} is fulfilled for some $\beta \gt 0$,
then also for all $\tilde \beta \gt \beta$ (provided one replaces $K_f$ by $\max\{R, 1\}^{\tilde \beta - \beta} K_f$).

In order to simplify the notation,
we also fix $T \in (0, \infty]$ and
functions in \eqref{eq:pw_scalar:reg_dsf} satisfying \eqref{eq:pw_scalar:p} and \eqref{eq:pw_scalar:D1} -- \eqref{eq:pw_scalar:u0}
as well as a nonnegative classical solution $u \in C^0(\ombar \times [0, T)) \cap C^{2, 1}(\ombar \times (0, T))$
of \eqref{prob:scalar_eq},
but emphasize that all constants below
only depend on the parameters in \eqref{eq:pw_scalar:params} as well as on $\alpha$.

Our goal, which will be achieved in Lemma~\ref{lm:w_bdd_l_infty} below, is to prove an $L^\infty$ bound for the function
\begin{align} \label{eq:def_w}
  w \colon \ombar \times [0, T) \ra \R, \quad
  (x, t) \mapsto |x|^\alpha u(x, t)
\end{align}
which in turn directly implies the desired estimate \eqref{eq:pw_scalar:u_est}.

To this end, we will rely on a testing procedure to obtain $L^p$ bounds for all $p \in (1, \infty)$.
Due to an iteration technique, this will then be improved to an $L^\infty$ bound---%
hence the constants in the following proofs need also to be independent of $p$.

In order to prepare said testing procedure, we will need
\begin{lemma} \label{lm:D_S_est}
  Let $s \in \R$ and $0 \le g \in C^0(\ombar \times (0, T) \times (0, \infty))$ 
  with
  \begin{align} \label{eq:D_S_est:cond}
    \sup_{(x, t) \in \Omega \times (0, T)} g(x, t, \rho) \le K_g \max\{\rho, 1\}^s
  \end{align}
  for all $\rho \ge 0$ and some $K_g \gt 0$.

  For any $\mu \in \R$, $\gamma \in \R$ and $\kappa \gt 0$,
  there exist $p_0 \ge 1$ and $C \gt 0$
  such that for all $p \ge p_0$ we have
  \begin{align} \label{eq:D_S_est:statement}
    \intom \left( g(x, t, u) |x|^\mu w^{p+\gamma} \right)^\kappa
    \le C \left( 1 + \intom \left( |x|^{\mu - \alpha s} w^{p+\gamma+s} \right)^\kappa \right)
  \end{align}
  in $(0, T)$.
\end{lemma}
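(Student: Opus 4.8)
The lemma is a technical preparation for a testing procedure. We have a nonnegative function $g$ with growth $g(x,t,\rho) \le K_g \max\{\rho,1\}^s$, and we want to bound an integral involving $g(x,t,u)$, weighted by $|x|^\mu w^{p+\gamma}$ and raised to the power $\kappa$, by a similar integral where $g$ has been "absorbed" into the weight — trading the factor $g^\kappa \approx u^{s\kappa}$ for a factor $|x|^{-\alpha s \kappa} w^{s\kappa}$ (since $w = |x|^\alpha u$, we have $u = |x|^{-\alpha} w$). The key structural fact is precisely that $u^s = |x|^{-\alpha s} w^s$, which is exactly the exchange appearing between the two sides of \eqref{eq:D_S_est:statement}. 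So the whole content is: estimate $g(x,t,u)^\kappa$ pointwise, then split into the regions where $u \ge 1$ and $u \le 1$.

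**The plan.** First I would fix $\mu, \gamma, \kappa$ and apply the growth bound \eqref{eq:D_S_est:cond} pointwise to get $g(x,t,u)^\kappa \le K_g^\kappa \max\{u,1\}^{s\kappa}$, so that the left-hand side of \eqref{eq:D_S_est:statement} is dominated by $K_g^\kappa \intom (\max\{u,1\}^s |x|^\mu w^{p+\gamma})^\kappa$. Then I would split $\Omega$ into $\Omega_1 \defs \{x : u(x,t) \ge 1\}$ and $\Omega_2 \defs \{x : u(x,t) < 1\}$. On $\Omega_1$, $\max\{u,1\}^s = u^s = |x|^{-\alpha s} w^s$ (using $w = |x|^\alpha u$), so the integrand becomes exactly $(|x|^{\mu - \alpha s} w^{p+\gamma+s})^\kappa$, which is precisely the integrand in the second term on the right-hand side; this region contributes directly to the desired integral with no further work. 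The remaining task is the low-density region $\Omega_2$, where $\max\{u,1\} = 1$ and the integrand reduces to $(|x|^\mu w^{p+\gamma})^\kappa$, which must be absorbed into the constant term $C$ on the right.

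**The main obstacle.** The real work is controlling $\intom_{\Omega_2} (|x|^\mu w^{p+\gamma})^\kappa$ by a $p$-independent constant. Here I would use that on $\Omega_2$ we have $w = |x|^\alpha u < |x|^\alpha \le \max\{R,1\}^\alpha$, so $w$ is bounded on $\Omega_2$ independently of $t$; hence $w^{(p+\gamma)\kappa} \le \max\{R,1\}^{\alpha(p+\gamma)\kappa}$ — but this grows with $p$, so a cruder bound will not suffice for a $p$-independent constant. The fix is to choose $p_0$ large enough: I would instead bound $w^{(p+\gamma+s)\kappa}$ from below on a suitable sub-region, or more simply observe that since $w < \max\{R,1\}^\alpha$ on $\Omega_2$, one can write $w^{(p+\gamma)\kappa} \le w^{(p+\gamma+s)\kappa} \cdot w^{-s\kappa}$ and control the stray power of $w$; whether $s$ is positive or negative dictates which direction the comparison runs. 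A clean route is: because $|x|^\mu$ and $|x|^{\mu-\alpha s}$ differ only by the bounded factor $|x|^{\alpha s}$ (as $|x| \le R$), and $w \le \max\{R,1\}^\alpha$ on $\Omega_2$, for $p \ge p_0$ with $p_0$ chosen depending on $s, \gamma, \kappa$ we get that the $\Omega_2$-integrand is dominated by a fixed multiple of $(|x|^{\mu-\alpha s} w^{p+\gamma+s})^\kappa$ plus an absolutely integrable $t$-independent function of $|x|$ over $\Omega$, the latter being finite since $\Omega$ is bounded. I would verify finiteness of $\intom |x|^{\mu\kappa}\dx$ and $\intom |x|^{(\mu-\alpha s)\kappa}\dx$; if the exponents are too negative this fails, but the statement only claims existence of \emph{some} $C$ and $p_0$, and by enlarging $p_0$ one forces enough positive powers of $|x|$ via $w \le \max\{R,1\}^\alpha$ to secure integrability near the origin.

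**Assembling the estimate.** Combining the two regions, the $\Omega_1$ contribution gives exactly $\intom (|x|^{\mu-\alpha s} w^{p+\gamma+s})^\kappa$ (extending the integral from $\Omega_1$ to all of $\Omega$ only increases it, as the integrand is nonnegative), while the $\Omega_2$ contribution is absorbed into the constant $1$ after multiplication by $C$. This yields \eqref{eq:D_S_est:statement} with a constant $C$ depending on $K_g, \mu, \gamma, \kappa, s, \alpha, R, n$ but crucially not on $p \ge p_0$ nor on $t$. I expect the only delicate point to be verifying that $p_0$ can be chosen so that all the competing powers of $|x|$ (coming from $\mu$, $\alpha s$, and the boundedness of $w$ on $\Omega_2$) remain integrable and $p$-uniform simultaneously; the rest is a routine case distinction on the sign of $s$ and a direct substitution using $u = |x|^{-\alpha} w$.
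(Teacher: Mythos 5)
Your overall skeleton coincides with the paper's: apply \eqref{eq:D_S_est:cond} pointwise, split into $\{u \ge 1\}$ and $\{u \lt 1\}$, and note that on $\{u \ge 1\}$ the substitution $u^s = |x|^{-\alpha s} w^s$ turns the integrand exactly into the target one (the case $s \le 0$ being immediate). The genuine gap lies in the only nontrivial step, the set $\{u \lt 1\}$. There a pointwise domination of $\left(|x|^\mu w^{p+\gamma}\right)^\kappa$ by $C \left(|x|^{\mu-\alpha s} w^{p+\gamma+s}\right)^\kappa$ is impossible: dividing, it would require $1 \le C u^{s\kappa}$ on $\{u \lt 1\}$, while $u$ may be arbitrarily small (even zero). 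None of the mechanisms you propose closes this. Writing $w^{(p+\gamma)\kappa} \le w^{(p+\gamma+s)\kappa} \cdot w^{-s\kappa}$ does not help, since the stray factor $w^{-s\kappa}$ is unbounded on $\{u \lt 1\}$ and the available estimate $w \lt |x|^\alpha$ there bounds it from \emph{below}, not above. Your ``clean route'' invokes only that $|x|^{\alpha s}$ is bounded and that $w \le \max\{R,1\}^\alpha$ on $\{u \lt 1\}$; these reduce matters to precisely the same gap between $w^{p+\gamma}$ and $w^{p+\gamma+s}$, so the claimed domination is asserted rather than proved. Filling it with $\xi^A \le 1 + \xi^B$ leaves a term $|x|^{(\mu - \alpha s)\kappa}$, which fails to be integrable when $(\mu-\alpha s)\kappa \le -n$ --- a case the lemma must cover since $\mu \in \R$ and $\kappa \gt 0$ are arbitrary. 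Finally, your remark that enlarging $p_0$ ``forces enough positive powers of $|x|$ via $w \le \max\{R,1\}^\alpha$'' cannot work as stated: that bound produces no powers of $|x|$ at all; the inequality that does, namely $w \lt |x|^\alpha$ on $\{u \lt 1\}$, appears in your text but is never used where it is needed.

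The paper closes the gap with a mechanism absent from your proposal: Young's inequality with the $p$-dependent exponents $\frac{p+\gamma+s}{p+\gamma}$ and $\frac{p+\gamma+s}{s}$, applied to the \emph{whole} weighted integrand, so that the leftover is the pure constant $\frac{s}{p+\gamma+s}|\Omega|$ while the main term becomes $\left(|x|^{\mu \cdot \frac{p+\gamma+s}{p+\gamma}} w^{p+\gamma+s}\right)^\kappa$; since $\mu \cdot \frac{p+\gamma+s}{p+\gamma} \ra \mu \gt \mu - \alpha s$ as $p \nea \infty$, for all $p$ beyond some $p_2$ the exponent of $|x|$ can be compared with $(\mu-\alpha s)\kappa$ on $B_1(0)$ (where larger exponents give smaller values) and on $\ombar \setminus B_1(0)$ (where all powers are comparable with $p$-uniform constants). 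For what it is worth, an elementary variant of your plan can be rescued: on $\{u \lt 1\} \cap B_1(0)$ one has $\left(|x|^\mu w^{p+\gamma}\right)^\kappa = |x|^{(\mu + \alpha(p+\gamma))\kappa} u^{(p+\gamma)\kappa} \le 1$ as soon as $p$ is large enough that $\mu + \alpha(p+\gamma) \ge 0$, and on $\{u \lt 1\} \setminus B_1(0)$ the inequality $\xi^A \le 1 + \xi^B$ suffices because $|x|$ is there bounded above and below; but this argument, whose key point is precisely the use of $u \lt 1$ near the origin, is not the one you wrote down.
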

\begin{proof}
  For any $p \gt p_1 \defs -\gamma + \frac{|\mu|}{\alpha}$,
  all integrals in \eqref{eq:D_S_est:statement} are finite by \eqref{eq:def_w}.

  As in the case of $s \le 0$ the statement follows directly by \eqref{eq:D_S_est:cond} and \eqref{eq:def_w}
  (for $p_0 \defs \max\{1, p_1\}$ and $C \defs K_g$),
  we may assume $s \gt 0$.
  Then \eqref{eq:D_S_est:cond} only implies
  \begin{align*}
        \intom \left( g(x, t, u) |x|^\mu w^{p+\gamma} \right)^\kappa
    \le K_g \int_{\{u \ge 1\}} \left( |x|^{\mu - \alpha s} w^{p+\gamma+s} \right)^\kappa
        + K_g \int_{\{u \lt 1\}} \left( |x|^\mu w^{p+\gamma} \right)^\kappa
  \end{align*}
  for all $p \ge p_1$ in $(0, T)$.

  Since $s \gt 0$, we may therein employ Young's inequality (with exponents $\frac{p+\gamma+s}{p+\gamma}, \frac{p+\gamma+s}{s}$)
  to obtain
  \begin{align*}
        \int_{\{u \lt 1\}} \left( |x|^\mu w^{p+\gamma} \right)^\kappa
    \le \frac{p+\gamma}{p+\gamma+s} \intom \left( |x|^{\mu \cdot \frac{p+\gamma+s}{p+\gamma}} w^{p+\gamma+s} \right)^\kappa
        + \frac{s}{p+\gamma+s} |\Omega|
  \end{align*}
  for all $p \ge p_1$ in $(0, T)$.

  As
  \begin{align*}
    \lim_{p \nea \infty} \mu \cdot \frac{p+\gamma+s}{p+\gamma} = \mu \gt \mu - \alpha s
  \end{align*}
  since $\alpha \gt 0$ and $s \gt 0$,
  we may find $p_2 \gt 1$ such that
  $\mu \cdot \frac{p+\gamma+s}{p+\gamma} \gt \mu - \alpha s$ for all $p \gt p_2$.

  Therefore, for $x \in B_1(0)$ and $p \ge p_2$
  \begin{align*}
        |x|^{(\mu \cdot \frac{p+\gamma+s}{p+\gamma}) \kappa }
    \le |x|^{(\mu - \alpha s) \kappa}
  \end{align*}
  while for $x \in \ombar \setminus B_1(0)$ we have for any $p \gt 1$
  \begin{align*}
        |x|^{(\mu \cdot \frac{p+\gamma+s}{p+\gamma}) \kappa}
    \le \max\left\{1, R^{(\mu \cdot \frac{p+\gamma+s}{p+\gamma}) \kappa }\right\}
    \le c_1
    \le c_1 \max\{1, R^{-(\mu - \alpha s) \kappa}\} |x|^{(\mu - \alpha s) \kappa}
  \end{align*}
  for some $c_1 \gt 0$.

  Since $\frac{p+\gamma}{p+\gamma+s} \le 1$ and $\frac{s}{p+\gamma+s} \le 1$,
  we arrive at \eqref{eq:D_S_est:statement} by setting $p_0 \defs \max\{p_1, p_2\}$
  and $C \gt 0$ appropriately.
\end{proof}

We may now initiate the aforementioned testing procedure and obtain a first estimate for the quantity $\ddt \intom w^p$ in $(0, T)$.

\begin{lemma} \label{lm:ddt_wp}
  There exist $C_1, C_2 \gt 0$ and $p_0 \gt 1$ such that for all $p \ge p_0$
  \begin{align} \label{eq:ddt_wp:statement}
          \frac1{p^2} \ddt \intom w^p
          + C_1 \intom |x|^{-(m-1)\alpha} w^{p+m-3} |\nabla w|^2
    &\le  C_2 \sum_{i=1}^3 \left( \intom \left( |x|^{-\mu_i} w^{p+\gamma_i} \right)^{\kappa_i} \right)^\frac1{\kappa_i}
          + C_2
  \end{align}
  in $(0, T)$,
  where
  \begin{align} 
    \mu_1 &\defs (m-1)\alpha + 2, &
    \mu_2 &\defs (2q - m - 1)\alpha + 2\beta, &
    \mu_3 &\defs (q-1)\alpha + 1 + \beta, \vphantom{\frac{\theta}{\theta}} \label{eq:ddt_wp:def_mu} \\
    \gamma_1 &\defs m - 1, &
    \gamma_2 &\defs 2q - m -1, &
    \gamma_3 &\defs q - 1,  \label{eq:ddt_wp:def_gamma} \vphantom{\frac{\theta}{\theta}} \\
    \kappa_1 &\defs 1, &
    \kappa_2 &\defs \frac{\theta}{\theta-2} \text{ and} &
    \kappa_3 &\defs \frac{\theta}{\theta-1}. \label{eq:ddt_wp:def_kappa}
  \end{align}
\end{lemma}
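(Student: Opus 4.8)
The plan is to test the differential inequality in \eqref{prob:scalar_eq} against the nonnegative weight $p|x|^\alpha w^{p-1}$ and to rewrite everything in terms of $w=|x|^\alpha u$. Since $w_t=|x|^\alpha u_t$, multiplying the first line of \eqref{prob:scalar_eq} by $p|x|^\alpha w^{p-1}$ and integrating should give
\[
  \ddt\intom w^p=p\intom |x|^\alpha w^{p-1}u_t\le p\intom |x|^\alpha w^{p-1}\,\nabla\cdot\big(D(x,t,u)\nabla u+S(x,t,u)f\big),
\]
provided finiteness of all integrals is first secured, which—as in the proof of Lemma~\ref{lm:D_S_est}—holds once $p$ is large. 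An integration by parts is then carried out: the outer boundary term is nonpositive by the second line of \eqref{prob:scalar_eq} together with $w^{p-1}\ge0$, and since $w$ and $\nabla w$ vanish at the origin at rate $|x|^\alpha$ (recall $\alpha>n$), excising a ball $B_\delta(0)$ and letting $\delta\sea0$ shows that no inner boundary contribution survives. I therefore expect $\ddt\intom w^p\le-p\intom\nabla(|x|^\alpha w^{p-1})\cdot(D\nabla u+Sf)$.

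The next step is to insert $\nabla u=|x|^{-\alpha}\nabla w-\alpha|x|^{-\alpha-2}xw$ and expand. For the diffusion part this produces a nonnegative term $p(p-1)\intom Dw^{p-2}|\nabla w|^2$, which—after invoking the lower bound \eqref{eq:pw_scalar:D1} in the form $D\ge K_{D,1}|x|^{-(m-1)\alpha}w^{m-1}$—becomes exactly the good gradient integrand $|x|^{-(m-1)\alpha}w^{p+m-3}|\nabla w|^2$ on the left; a radial term proportional to $\intom Dw^p|x|^{-2}$; and a mixed term $\intom Dw^{p-1}|x|^{-1}\nabla w\cdot x/|x|$ that is linear in $\nabla w$. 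The mixed term is handled by Young's inequality, contributing a small multiple of the good gradient term plus a further multiple of $\intom Dw^p|x|^{-2}$. Applying the upper bound \eqref{eq:pw_scalar:D2} and Lemma~\ref{lm:D_S_est} with $g=D$, $s=m-1$, $\mu=-2$, $\gamma=0$, $\kappa=1$ then turns $\intom Dw^p|x|^{-2}$ into $C(1+\intom|x|^{-\mu_1}w^{p+\gamma_1})$, which is the $i=1$ summand.

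For the cross-diffusion part I would treat the two pieces of $\nabla(|x|^\alpha w^{p-1})$ separately. The piece carrying $\nabla w$, namely $p(p-1)\intom Sw^{p-2}|x|^\alpha f\cdot\nabla w$, is again split by Young's inequality into a small multiple of the good gradient term plus a multiple of $\intom S^2|f|^2w^{p-m-1}|x|^{\alpha(m+1)}$; into the latter I insert $|f|^2=(|x|^{\theta\beta}|f|^\theta)^{2/\theta}|x|^{-2\beta}$, apply Hölder with exponents $\tfrac\theta2$ and $\kappa_2=\tfrac{\theta}{\theta-2}$, use \eqref{eq:pw_scalar:f}, and finally Lemma~\ref{lm:D_S_est} with $g=S^2$, $s=2q$, $\kappa=\kappa_2$, landing on the $i=2$ summand. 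The purely radial piece $p\alpha\intom Sw^{p-1}|x|^{\alpha-1}f\cdot x/|x|$, bounded by $\intom|S|w^{p-1}|x|^{\alpha-1}|f|$, is treated by Hölder with exponents $\theta$ and $\kappa_3=\tfrac{\theta}{\theta-1}$ (writing $|f|=(|x|^{\theta\beta}|f|^\theta)^{1/\theta}|x|^{-\beta}$), \eqref{eq:pw_scalar:f} and Lemma~\ref{lm:D_S_est} with $g=|S|$, $s=q$, $\kappa=\kappa_3$, yielding the $i=3$ summand. In every case a short computation verifies that the exponents $\mu-\alpha s$ and $\gamma+s$ returned by Lemma~\ref{lm:D_S_est} coincide with $-\mu_i$ and $\gamma_i$ of \eqref{eq:ddt_wp:def_mu}--\eqref{eq:ddt_wp:def_kappa}.

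The main obstacle I anticipate is bookkeeping the dependence on $p$ so that the final constants are $p$-independent after division by $p^2$. All prefactors scale like $p^2$, $p(p-1)$, $p(p-2)$ or $p$, hence stay bounded after dividing by $p^2$ once $p$ is large; the good gradient term retains a coefficient bounded below (e.g.\ $\ge\tfrac14K_{D,1}$) as long as the Young parameters are fixed small, uniformly in $p$, so that the absorbed gradient pieces never exceed half of $\tfrac{p(p-1)}{p^2}K_{D,1}$. Taking $p_0$ to be the maximum of the thresholds from the three invocations of Lemma~\ref{lm:D_S_est} and from the finiteness and absorption requirements, and collecting constants into $C_1,C_2$, then gives \eqref{eq:ddt_wp:statement}.
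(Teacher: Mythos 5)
Your proposal is correct and follows essentially the same route as the paper's proof: the same test function $|x|^\alpha w^{p-1}$ (up to the immaterial factor $p$), the same Young absorptions into the dissipative term, and the same three invocations of Lemma~\ref{lm:D_S_est} with identical parameter choices $(g,s,\mu,\gamma,\kappa)$, so that the exponents in \eqref{eq:ddt_wp:def_mu}--\eqref{eq:ddt_wp:def_kappa} emerge exactly as in the paper. The only cosmetic difference is your excision of a ball around the origin, which is not actually needed: since $\alpha \gt n \ge 2$ and $p$ is large, the test function $|x|^\alpha w^{p-1} = |x|^{\alpha p} u^{p-1}$ is $C^1$ on all of $\ombar$, so the integration by parts is standard.
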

\begin{proof}
  As
  \begin{align*}
      \nabla u
    = \nabla (|x|^{-\alpha} w)
    = |x|^{-\alpha} \nabla w - \alpha |x|^{-\alpha-1} w \nabla |x|,
  \end{align*}
  in $\ombar \times (0, T)$,
  testing the PDI in \eqref{prob:scalar_eq} with $|x|^\alpha w^{p-1}$
  and integrating by parts gives
  \begin{align*}
          \frac1p \ddt \intom w^p
    &=    \intom w_t w^{p-1} \\
    &=    \intom u_t (|x|^\alpha w^{p-1}) \\
    &\le  - \intom (D(x, t, u) \nabla u + S(x, t, u) f) \cdot \nabla( |x|^\alpha w^{p-1} ) \\
    &\pe  + \int_{\partial \Omega} |x|^\alpha w^{p-1} (D(x, t, u) \nabla u + S(x, t, u) f) \cdot \nu
  \intertext{in $(0, T)$, wherein the boundary term is nonpositive because of the second line in \eqref{prob:scalar_eq}.
  Therefore,}
          \frac1p \ddt \intom w^p
    &\le  - (p-1) \intom D(x, t, u) w^{p-2} |\nabla w|^2 \\
    &\pe  + \alpha (p-1) \intom D(x, t, u) |x|^{-1} w^{p-1} \nabla w \cdot \nabla |x| \\
    &\pe  - \alpha \intom D(x, t, u) |x|^{-1} w^{p-1} \nabla w \cdot \nabla |x| \\
    &\pe  + \alpha^2 \intom D(x, t, u) |x|^{-2} w^p |\nabla |x||^2 \\
    &\pe  - (p-1) \intom S(x, t, u) |x|^\alpha w^{p-2} f \cdot \nabla w \\
    &\pe  - \alpha \intom S(x, t, u) |x|^{\alpha-1} w^{p-1} f \cdot \nabla |x|
  \end{align*}
  in $(0, T)$.

  Therein is by Young's inequality
  \begin{align*}
    &\pe  \alpha (p-2) \intom D(x, t, u) |x|^{-1} w^{p-1} \nabla w \cdot \nabla |x| \\
    &\le  \frac{p-1}{2} \intom D(x, t, u) w^{p-2} |\nabla w|^2
          + \frac{\alpha^2 (p-2)^2}{2(p-1)} \intom D(x, t, u) |x|^{-2} w^p |\nabla |x||^2
  \end{align*}
  in $(0, T)$.

  As $|\nabla |x|| = 1$ for all $x \in \Omega \setminus \{0\}$
  and using \eqref{eq:pw_scalar:D1},
  we may therefore find $c_1, c_2, c_3, c_4 \gt 0$ such that
  for all $p \ge 2$
  \begin{align} \label{eq:ddt_wp:c1_c4}
          \frac1{p^2} \ddt \intom w^p
    &\le  - c_1 \intom |x|^{-(m-1)\alpha} w^{p+m-3} |\nabla w|^2 \notag \\
    &\pe  + c_2 \intom  D(x, t, u) |x|^{-2} w^p \notag \\
    &\pe  + c_3 \intom |S(x, t, u)| |x|^\alpha w^{p-2} |f \cdot \nabla w| \notag \\
    &\pe  + c_4 \intom |S(x, t, u)| |x|^{\alpha-1} w^{p-1} |f|
  \end{align}
  holds in $(0, T)$.

  By Lemma~\ref{lm:D_S_est} (with $s = m-1$, $g = D$, $\mu = -2$, $\gamma = 0$, $\kappa = 1$) and \eqref{eq:pw_scalar:D2}
  there are $c_5 \gt 0$ and $p_1 \ge 1$ such that
  \begin{align} \label{eq:ddt_wp:c5}
        \intom  D(x, t, u) |x|^{-2} w^p
    \le c_5 \intom |x|^{-(m-1)\alpha - 2} w^{p+m-1} + c_5
  \end{align}
  for all $p \ge p_1$ in $(0, T)$.

  Furthermore, by employing Young's inequality we may find $c_6 \gt 0$ such that
  \begin{align} \label{eq:ddt_wp:c6}
    &\pe  \intom |S(x, t, u)| |x|^\alpha w^{p-2} |f \cdot \nabla w| \notag \\
    &\le  \frac{c_1}{2c_3} \intom |x|^{-(m-1)\alpha} w^{p+m-3} |\nabla w|^2
          + c_6 \intom |S(x, t, u)|^2 |x|^{(m+1)\alpha} w^{p-m-1} |f|^2
  \end{align}
  for all $p \ge 1$ in $(0, T)$.
  Therein is by Hölder's inequality
  (with exponents $\frac{\theta}{2}, \frac{\theta}{\theta-2}$; note that $\theta \gt n \ge 2$ by \eqref{eq:pw_scalar:params})
  and \eqref{eq:pw_scalar:f}
  \begin{align} \label{eq:ddt_wp:kf_1}
          \intom |S(x, t, u)|^2 |x|^{(m+1)\alpha} w^{p-m-1} |f|^2
    &\le  K_f^\frac{2}{\theta}
          \left(
            \intom \left( |S(x, t, u)|^2 |x|^{(m+1)\alpha - 2\beta} w^{p-m-1} \right)^\frac{\theta}{\theta-2}
          \right)^\frac{\theta-2}{\theta}
  \end{align}
  for all $p \ge 1$ in $(0, T)$.

  Herein we again make use of Lemma~\ref{lm:D_S_est}
  (with $s = 2q $, $g = S^2$, $\mu = (m+1)\alpha - 2\beta$, $\gamma = -m-1$, $\kappa = \frac{\theta}{\theta-2}$)
  and \eqref{eq:pw_scalar:S} to obtain $p_2 \ge 1$ and $c_7 \gt 0$ such that
  \begin{align} \label{eq:ddt_wp:c7}
          \intom \left( |S(x, t, u)|^2 |x|^{(m+1)\alpha - 2\beta} w^{p-m-1} \right)^\frac{\theta}{\theta-2}
    &\le  c_7 \intom \left( |x|^{-(2q-m-1)\alpha - 2\beta} w^{p+2q-m-1} \right)^\frac{\theta}{\theta-2}
          + c_7
  \end{align}
  holds for all $p \ge p_2$ in $(0, T)$.

  Once more employing Hölder's inequality, \eqref{eq:pw_scalar:f},
  Lemma~\ref{lm:D_S_est} (with $s = q$, $g = |S|$, $\mu = \alpha - 1 - \beta$, $\gamma = - 1$, $\kappa = \frac{\theta}{\theta-1}$)
  and \eqref{eq:pw_scalar:S},
  we see that
  \begin{align} \label{eq:ddt_wp:c8}
          \intom |S(x, t, u)| |x|^{\alpha-1} w^{p-1} |f|
    &\le  K_f^\frac1\theta \left(
            \intom \left( |S(x, t, u)| |x|^{\alpha-1 - \beta} w^{p-1} \right)^\frac{\theta}{\theta-1}
          \right)^\frac{\theta-1}{\theta} \notag \\
    &\le  c_8 \left(
            \intom \left( |x|^{-[(q-1) \alpha + 1 + \beta]} w^{p+q-1} \right)^\frac{\theta}{\theta-1}
          \right)^\frac{\theta-1}{\theta}
          + c_8
  \end{align}
  holds for all $p \ge p_3$ in $(0, T)$ for certain $p_3 \ge 1$ and $c_8 \gt 0$.
  
  Finally, by plugging \eqref{eq:ddt_wp:c5} -- \eqref{eq:ddt_wp:c8} into \eqref{eq:ddt_wp:c1_c4},
  we obtain the desired estimate \eqref{eq:ddt_wp:statement}
  for $p_0 \defs \max\{p_1, p_2, p_3\}$ and certain $C_1, C_2 \gt 0$.
\end{proof}

Before estimating the terms on the right hand side of \eqref{eq:ddt_wp:statement} against the dissipative term therein,
we have a deeper look at the parameters in \eqref{eq:ddt_wp:def_mu} -- \eqref{eq:ddt_wp:def_kappa}.
Precisely due to \eqref{eq:pw_scalar:cond_alpha}, our condition on $\alpha$, they allow for the following
\begin{lemma} \label{lm:params}
  Let $i \in \{1, 2, 3\}$
  as well as $\mu_i$ and $\kappa_i$
  as in \eqref{eq:ddt_wp:def_mu} and \eqref{eq:ddt_wp:def_kappa}, respectively.

  Then
  \begin{align} \label{eq:params:def_lambda}
    \lambda_i \defs \frac{\alpha \pu}{\kappa_i (\mu_i - (m-1) \alpha)_+}
  \end{align}
  fulfills
  \begin{align*} 
    \lambda_i \in (1, \infty)
    \quad \text{as well as} \quad
    \frac{2\kappa_i \lambda_i}{\lambda_i-1} \lt \frac{2n}{n-2}.
  \end{align*}
\end{lemma}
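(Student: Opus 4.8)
The plan is to reduce both assertions to elementary affine inequalities in $\alpha$ and then to play these off against the standing hypotheses \eqref{eq:pw_scalar:cond_m_q}, \eqref{eq:pw_scalar:cond_alpha} and \eqref{eq:cond_m_q_alpha_beta}. First I would record the three relevant differences explicitly: with $\mu_i$ as in \eqref{eq:ddt_wp:def_mu} one computes $\mu_1 - (m-1)\alpha = 2$, $\mu_2 - (m-1)\alpha = 2(\beta - (m-q)\alpha)$ and $\mu_3 - (m-1)\alpha = \beta - (m-q)\alpha + 1$. Since \eqref{eq:cond_m_q_alpha_beta} gives $(m-q)\alpha \lt \beta$, each of these is strictly positive; hence the positive parts in \eqref{eq:params:def_lambda} are never triggered, and $\lambda_i = \frac{\alpha \pu}{\kappa_i(\mu_i - (m-1)\alpha)}$ is a well-defined, finite, positive number for every $i$.

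The single fact driving everything else is the inequality $\alpha \pu \gt n$. I would obtain it by combining the upper bound on $m-q$ in \eqref{eq:pw_scalar:cond_m_q} — which rearranges to $m - q + \frac{\pu}{n} - \frac{\pu}{\theta} \le \frac{\beta\pu}{n}$ — with the lower bound \eqref{eq:pw_scalar:cond_alpha}, yielding
\[
  \alpha \gt \frac{\beta}{m-q+\frac{\pu}{n}-\frac{\pu}{\theta}} \ge \frac{\beta}{\beta\pu/n} = \frac{n}{\pu},
\]
where the denominators are positive thanks to the lower bound on $m-q$ in \eqref{eq:pw_scalar:cond_m_q}. Together with $n \ge 2$ this in particular gives $\alpha\pu \gt 2$.

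For $\lambda_i \in (1,\infty)$, finiteness is already clear, so only $\lambda_i \gt 1$, i.e. $\alpha\pu \gt \kappa_i(\mu_i - (m-1)\alpha)$, remains. For $i=1$ this reads $\alpha\pu \gt 2$, which we just noted. For $i=2$ and $i=3$, inserting the values of $\kappa_i$ from \eqref{eq:ddt_wp:def_kappa} and the differences above and dividing by $2\theta$ respectively $\theta$, the condition $\lambda_i \gt 1$ becomes $\alpha(m-q+\frac{\pu}{2}-\frac{\pu}{\theta}) \gt \beta$ and $\alpha(m-q+\pu-\frac{\pu}{\theta}) \gt \beta+1$, respectively. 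The first follows from \eqref{eq:pw_scalar:cond_alpha} since $\frac{\pu}{2} \ge \frac{\pu}{n}$ (as $n\ge 2$); the second follows by writing its left-hand side as the left-hand side of \eqref{eq:pw_scalar:cond_alpha} plus $\alpha\pu(1-\frac1n)$ and noting $\alpha\pu(1-\frac1n)\ge 1$, which holds because $\alpha\pu \gt n \ge \frac{n}{n-1}$.

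Finally, for $\frac{2\kappa_i\lambda_i}{\lambda_i-1} \lt \frac{2n}{n-2}$ I would first simplify, using the formula for $\lambda_i$, to
\[
  \frac{2\kappa_i\lambda_i}{\lambda_i-1} = \frac{2\alpha\pu\kappa_i}{\alpha\pu - \kappa_i(\mu_i-(m-1)\alpha)},
\]
whose denominator is positive precisely because $\lambda_i \gt 1$. For $n=2$ the right-hand side $\frac{2n}{n-2}$ is read as $+\infty$ and there is nothing to prove; for $n \ge 3$ both sides are positive, so I would cross-multiply and simplify. The resulting affine inequality in $\alpha$ is, for $i=2$, once again \emph{exactly} \eqref{eq:pw_scalar:cond_alpha}; for $i=1$ it is just $\alpha\pu \gt n$; and for $i=3$ it is $\alpha(m-q+\frac{2\pu}{n}-\frac{\pu}{\theta}) \gt \beta+1$, which follows from \eqref{eq:pw_scalar:cond_alpha} together with $\alpha\pu\gt n$ as before. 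No step is more than elementary algebra, so there is no genuine obstacle; the only point requiring care is recognizing the right combination of hypotheses — deriving $\alpha\pu \gt n$ and observing that \eqref{eq:pw_scalar:cond_alpha} has been calibrated to make the case $i=2$ tight (leaving no slack there), while the cases $i=1,3$ are then comfortably satisfied.
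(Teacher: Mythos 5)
Your proof is correct and follows essentially the same route as the paper's: both rest on deriving $\alpha \pu > n$ by combining the upper bound in \eqref{eq:pw_scalar:cond_m_q} with \eqref{eq:pw_scalar:cond_alpha}, on using \eqref{eq:cond_m_q_alpha_beta} to rule out the positive parts and guarantee finiteness, and on reducing both assertions to elementary inequalities in $\alpha$ that \eqref{eq:pw_scalar:cond_alpha} is calibrated to satisfy (tightly for $i=2$). The only difference is organizational—the paper bounds $\kappa_i \lambda_i$ from below by explicit constants and invokes monotonicity of $\xi \mapsto \frac{2\xi}{\xi/\kappa_i - 1}$, whereas you cross-multiply $\frac{2\alpha\pu\kappa_i}{\alpha\pu - \kappa_i(\mu_i-(m-1)\alpha)} < \frac{2n}{n-2}$ directly into affine inequalities—but the underlying computations coincide.
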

\begin{proof}
  Plugging \eqref{eq:ddt_wp:def_mu} into \eqref{eq:params:def_lambda} yields
  \begin{align*}
    \lambda_1 = \frac{\alpha \pu}{2\kappa_1}, \quad
    \lambda_2 = \frac{\alpha \pu}{\kappa_2(2\beta - 2(m-q) \alpha)_+}
    \quad \text{and} \quad
    \lambda_3 = \frac{\alpha \pu}{\kappa_3(1 + \beta - (m-q) \alpha)_+},
  \end{align*}
  hence $\lambda_i \lt \infty$, $i \in \{1, 2, 3\}$,
  since $(m-q) \alpha \lt \beta$ and $\kappa_i \gt 0$, $i \in \{1, 2, 3\}$
  by \eqref{eq:cond_m_q_alpha_beta} and \eqref{eq:ddt_wp:def_kappa}, respectively.

  As $m-q \le \frac{\pu}{\theta} + \frac{\beta \pu-\pu}{n}$ by \eqref{eq:pw_scalar:cond_m_q},
  we furthermore have
  \begin{align*}
        \alpha
    \gt \frac{\beta}{m-q + \frac{\pu}{n} - \frac{\pu}{\theta}}
    \ge \frac{\beta}{\frac{\beta \pu}{n}}
    =   \frac{n}{\pu}
  \end{align*}
  by \eqref{eq:pw_scalar:cond_alpha}.

  Since $\lambda_1 = \frac{\alpha \pu}{2}$ and $\alpha \gt \frac{n}{\pu}$,
  we immediately obtain $\lambda_1 \gt 1$ and
  \begin{align*}
        \frac{2\kappa_1 \lambda_1}{\lambda_1-1}
    =   \frac{2\alpha \pu}{\alpha \pu - 2}
    \lt \frac{2n}{n-2}.
  \end{align*}

  By \eqref{eq:pw_scalar:cond_alpha},
  we have $\alpha \gt \frac{\beta}{m-q + \frac{\pu}{n} - \frac{\pu}{\theta}}$
  and thus due to \eqref{eq:pw_scalar:cond_m_q} also 
  \begin{align*} 
    (m-q) \alpha \gt \beta - \frac{\alpha \pu}{n} + \frac{\alpha \pu}{\theta}.
  \end{align*}

  Therefore, we may further compute
  \begin{align*}
          \kappa_2 \lambda_2 
    &=    \frac{\alpha \pu}{2\beta - 2(m-q) \alpha}
     \gt  \frac{\alpha \pu}{2(\frac{\alpha \pu}{n}-\frac{\alpha \pu}{\theta})}
     =    \frac{n\theta}{2(\theta-n)},
  \end{align*}
  hence $\lambda_2 \gt \frac{(\theta-2)n}{2(\theta-n)} \ge \frac{2(\theta-2)}{2(\theta-2)} = 1$
  since $n \ge 2$
  and
  (as $(\kappa_2, \infty) \ni \xi \mapsto \frac{2\xi}{\frac\xi{\kappa_2} - 1}$ is strictly decreasing)
  \begin{align*}
          \frac{2 \kappa_2 \lambda_2}{\lambda_2-1} 
    &=    \frac{2 \kappa_2 \lambda_2}{\frac{\kappa_2 \lambda_2}{\kappa_2} - 1}
     \lt  \frac{2 n \theta}{\frac{\theta-2}{\theta} n \theta - 2(\theta - n)}
     =    \frac{2 n \theta}{n(\theta-2) -2\theta + 2n} 
     =    \frac{2 n}{n-2}.
  \end{align*}

  Similarly, we see that
  \begin{align*}
          \kappa_3 \lambda_3
    &=    \frac{\alpha \pu}{1 + \beta - (m-q) \alpha} 
     \gt  \frac{\alpha \pu}{1 + \frac{\alpha \pu}{n} - \frac{\alpha \pu}{\theta}}
     \gt  \frac{\alpha \pu}{\frac{2\alpha \pu}{n} - \frac{\alpha \pu}{\theta}}
     =    \frac{n\theta}{2\theta-n}
  \end{align*}
  since $1 \lt \frac{\alpha \pu}{n}$,
  thus
  $\lambda_3 \gt \frac{(\theta-1)n}{2\theta-n} \ge \frac{2\theta-2}{2\theta-2} = 1$
  and
  \begin{align*}
          \frac{2 \kappa_3 \lambda_3}{\lambda_3-1} 
    &=    \frac{2 \kappa_3 \lambda_3}{\frac{\kappa_3 \lambda_3}{\kappa_3} - 1}
     \lt  \frac{2 n \theta}{\frac{\theta-1}{\theta} n \theta - 2\theta + n}
     =    \frac{2 n \theta}{n(\theta-1) - 2\theta + n} 
     =    \frac{2 n}{n-2}.
  \end{align*}

  This clearly proves the lemma.
\end{proof}

Another important ingredient will be
\begin{lemma} \label{lm:lp_bdd_w}
  Throughout $(0, T)$
  \begin{align*} 
    \intom |x|^{-\alpha \pu} w^{\pu} \le M
  \end{align*}
  holds.
\end{lemma}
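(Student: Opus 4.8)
The plan is to recognize that this lemma is nothing more than a rewriting of the a priori hypothesis \eqref{eq:pw_scalar:p} in terms of the weighted quantity $w$. Recall from \eqref{eq:def_w} that $w(x, t) = |x|^\alpha u(x, t)$, so that raising to the power $\pu$ gives $w^{\pu} = |x|^{\alpha \pu} u^{\pu}$ pointwise on $\ombar \times [0, T)$. The exponent $\alpha \pu$ here is chosen precisely so that the weight $|x|^{-\alpha \pu}$ in the integrand cancels the factor $|x|^{\alpha \pu}$ coming from $w^{\pu}$.

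Concretely, I would simply compute, for each fixed $t \in (0, T)$,
\begin{align*}
  \intom |x|^{-\alpha \pu} w^{\pu}
  = \intom |x|^{-\alpha \pu} \left( |x|^\alpha u \right)^{\pu}
  = \intom |x|^{-\alpha \pu} |x|^{\alpha \pu} u^{\pu}
  = \intom u^{\pu},
\end{align*}
where the middle step uses $\pu \ge 1$ and the nonnegativity of $|x|^\alpha$ and $u$ to move the power inside, and the last weight cancellation is exact since $-\alpha \pu + \alpha \pu = 0$.

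It then remains only to invoke the standing assumption \eqref{eq:pw_scalar:p}, namely $\sup_{t \in (0, T)} \intom u^{\pu} \le M$, which gives $\intom u^{\pu} \le M$ for every $t \in (0, T)$ and hence the claimed bound throughout $(0, T)$. There is no genuine obstacle in this argument: it is a direct substitution of the definition of $w$ together with the imposed control on $u$, and the only point worth flagging is that the precise exponent $-\alpha \pu$ in the weight was engineered for exactly this cancellation. This identity is what later lets the $\pu$-integrability of $u$ feed into the iterative testing scheme as a uniform $L^{\pu}$-type bound for the weighted function $w$.
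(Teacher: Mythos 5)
Your proposal is correct and is exactly the paper's own argument: the paper's proof simply states that the bound is an immediate consequence of \eqref{eq:def_w} and \eqref{eq:pw_scalar:p}, which is precisely the weight-cancellation computation you wrote out. (Minor remark: the identity $(|x|^\alpha u)^{\pu} = |x|^{\alpha\pu} u^{\pu}$ needs only nonnegativity of the base, not $\pu \ge 1$.)
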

\begin{proof}
  This is an immediate consequence of \eqref{eq:def_w} and \eqref{eq:pw_scalar:p}.
\end{proof}

As further preparation, we state a quantitative Ehrling-type lemma.
Since this will be also used in the proof of the quite general Lemma~\ref{lm:unique} below
we neither require $n \ge 2$ nor $0 \in \Omega$.
\begin{lemma} \label{lm:ehrling}
  Let $\Omega \subset \R^n$, $n \in \N$, be a smooth, bounded domain
  and $0 \lt s \lt r \lt \frac{2n}{(n-2)_+}$.

  Then there exist $a \in (0, 1)$ and $C \gt 0$ such that for all $\eps \gt 0$ we have
  \begin{align*}
        \|\varphi\|_{\leb r}
    \le \eps \|\nabla \varphi\|_{\leb 2}
        + C \min\{1, \eps\}^{-\frac{a}{1-a}} \|\varphi\|_{\leb s}
    \quad \text{for all $\varphi \in \sob12$}.
  \end{align*}
  Here and below
  we set $\|\varphi\|_{\leb q} \defs \left( \intom |\varphi|^q \right)^\frac1q$ even for $q \in (0, 1)$.
\end{lemma}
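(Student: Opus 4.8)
The plan is to deduce the estimate from the Gagliardo--Nirenberg inequality combined with a Young's inequality whose free parameter is tuned to $\eps$, the latter being what produces the explicit $\eps$-dependence. First I would fix the interpolation exponent: for the homogeneous inequality $\|\varphi\|_{\leb r} \le C \|\nabla \varphi\|_{\leb 2}^a \|\varphi\|_{\leb s}^{1-a}$ to be scaling-consistent, $a$ must be
\begin{align*}
  a \defs \frac{\frac1s - \frac1r}{\frac1s + \frac1n - \frac12},
\end{align*}
and the two hypotheses are exactly what place $a$ in $(0,1)$: $s \lt r$ forces the numerator positive so $a \gt 0$, while $r \lt \frac{2n}{(n-2)_+}$ forces $a \lt 1$ (and $s \lt \frac{2n}{(n-2)_+}$ keeps the denominator positive). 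With this $a$ fixed, the Gagliardo--Nirenberg inequality on the bounded domain $\Omega$ yields, at least when $s \ge 1$, a constant $C_{\mathrm{GN}} \gt 0$ with
\begin{align*}
  \|\varphi\|_{\leb r} \le C_{\mathrm{GN}} \left( \|\nabla \varphi\|_{\leb 2}^a \|\varphi\|_{\leb s}^{1-a} + \|\varphi\|_{\leb s} \right)
  \qquad \text{for all $\varphi \in \sob12$.}
\end{align*}

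Next I would split according to the size of $\eps$. For $\eps \gt 1$ there is nothing to prove: then $\min\{1, \eps\} = 1$, so the asserted bound, with coefficient $C$ in front of $\|\varphi\|_{\leb s}$ and the larger coefficient $\eps$ in front of $\|\nabla \varphi\|_{\leb 2}$, is weaker than the case $\eps = 1$ and hence implied by it. For $\eps \in (0, 1]$ I treat the two summands above separately. To the genuinely interpolating term I apply Young's inequality with exponents $\frac1a$ and $\frac1{1-a}$ and a free factor chosen so that the coefficient of $\|\nabla \varphi\|_{\leb 2}$ becomes precisely $\eps$, giving
\begin{align*}
  C_{\mathrm{GN}} \|\nabla \varphi\|_{\leb 2}^a \|\varphi\|_{\leb s}^{1-a}
  \le \eps \|\nabla \varphi\|_{\leb 2} + C \eps^{-\frac{a}{1-a}} \|\varphi\|_{\leb s},
\end{align*}
which for $\eps \le 1$ is exactly the desired form since $\eps^{-a/(1-a)} = \min\{1, \eps\}^{-a/(1-a)}$. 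The leftover summand $C_{\mathrm{GN}} \|\varphi\|_{\leb s}$ is then absorbed at no cost, because $\min\{1, \eps\}^{-a/(1-a)} \ge 1$. Collecting constants settles the regime $\eps \le 1$, and together with the reduction above, all $\eps \gt 0$.

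The one point demanding care is that $s$ is only assumed positive and may lie in $(0, 1)$, where $\|\cdot\|_{\leb s}$ is not a norm and the textbook Gagliardo--Nirenberg inequality does not directly supply the displayed estimate. I would circumvent this by first establishing the statement with $\leb s$ replaced by $\leb 1$---which is legitimate once $r \gt 1$, and $r \gt 1$ may be assumed after enlarging $r$ within $(s, \frac{2n}{(n-2)_+})$ and using $\|\varphi\|_{\leb r} \le C \|\varphi\|_{\leb{r'}}$ on the bounded domain---and then, in the case $s \lt 1$, interpolating $\|\varphi\|_{\leb 1} \le \|\varphi\|_{\leb s}^{\lambda} \|\varphi\|_{\leb r}^{1-\lambda}$ with $1 = \frac{\lambda}{s} + \frac{1-\lambda}{r}$ so that $\lambda \in (0,1)$; a further Young step reabsorbs the factor $\|\varphi\|_{\leb r}^{1-\lambda}$ into the left-hand side. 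Since the existentially quantified $a$ need not be the one displayed above, these reductions cost nothing structurally. This bookkeeping around the sub-unit exponent $s \lt 1$, rather than the interpolation or the Young estimate in themselves, is the main technical obstacle.
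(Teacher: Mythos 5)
Your proof is correct, and its core --- fixing the scaling exponent $a = \frac{\frac1s - \frac1r}{\frac1s + \frac1n - \frac12}$, invoking Gagliardo--Nirenberg, and tuning Young's inequality with exponents $\frac1a, \frac1{1-a}$ so that the gradient term carries coefficient exactly $\eps$ --- is precisely the paper's argument; the explicit case split at $\eps = 1$ is handled the same way there, just left implicit in the $\min\{1,\eps\}$ formulation. Where you genuinely diverge is the treatment of sub-unit exponents. The paper sidesteps this issue entirely by citing a version of the Gagliardo--Nirenberg inequality that remains valid for $r, s \in (0,1)$, namely Lemma~2.3 of \cite{LiLankeitBoundednessChemotaxisHaptotaxis2016}, so no case distinction on $s$ is needed and the whole proof is a few lines. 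You instead assume only the textbook inequality (exponents $\ge 1$) and bootstrap: first prove the estimate with $\leb1$ in place of $\leb s$ (enlarging $r$ above $1$ if necessary, which is harmless on a bounded domain), then interpolate $\|\varphi\|_{\leb 1} \le \|\varphi\|_{\leb s}^{\lambda}\|\varphi\|_{\leb r}^{1-\lambda}$ --- legitimate for $s \lt 1$ since it is nothing but H\"older --- and reabsorb $\|\varphi\|_{\leb r}^{1-\lambda}$ via a further Young step; the absorption is valid because $\|\varphi\|_{\leb r} \lt \infty$ for $\varphi \in \sob12$ by Sobolev embedding. This costs an extra layer of bookkeeping and changes the value of $a$, but, as you observe, the lemma only asserts existence of \emph{some} $a \in (0,1)$, and since $a \mapsto \frac{a}{1-a}$ maps $(0,1)$ onto $(0,\infty)$, any positive power of $\min\{1,\eps\}$ can be recast in the required form. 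In short: your route is more elementary and self-contained, buying independence from the generalized interpolation inequality; the paper's route is shorter and produces the natural interpolation exponent directly.
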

\begin{proof}
  The conditions $s \lt r \lt \frac{2n}{(n-2)_+}$
  imply that
  \begin{align*}
          a
    \defs \frac{\frac1s - \frac1r}{\frac 1s + \frac1n - \frac12}
    =     \frac{\frac{r - s}{rs}}{\frac{2n + 2s - ns}{2ns}}
    =     \frac{2nr - 2ns}{2nr + 2rs - nsr}
    =     \frac{r - s}{r - \frac{n-2}{2n} \cdot r s}
  \end{align*}
  satisfies $a \in (0, 1)$.

  Hence we may invoke the Gagliardo--Nirenberg inequality
  (which holds even for $r, s \in (0, 1)$, see for instance \cite[Lemma~2.3]{LiLankeitBoundednessChemotaxisHaptotaxis2016})
  to obtain $c_1 \gt 0$ with the property that
  \begin{align*}
          \|\varphi\|_{\leb r}
    &\le  c_1 \|\nabla \varphi\|_{\leb 2}^{a} \|\varphi\|_{\leb s}^{1-a}
          + c_1 \|\varphi\|_{\leb s}
    \quad \text{for all $\varphi \in \sob12$}.
  \end{align*}
   
  Therein we have by Young's inequality (with exponents $\frac1a, \frac1{1-a}$)
  for all $\eps \in (0, 1)$ and all $\varphi \in \sob12$
  \begin{align*}
          \|\nabla \varphi\|_{\leb 2}^{a} \|\varphi\|_{\leb s}^{1-a}
    &=    \left(\frac{\eps}{a c_1} \|\nabla \varphi\|_{\leb 2}\right)^{a}
          \cdot \left( \left(\frac{\eps}{a c_1}\right)^{-\frac{a}{1-a}} \|\varphi\|_{\leb s}\right)^{1-a} \\
    &\le  \frac{\eps}{c_1} \|\nabla \varphi\|_{\leb 2}
          + c_2 \eps^{-\frac{a}{1-a}} \|\varphi\|_{\leb s},
  \end{align*}
  where $c_2 \defs (1-a) (a c_1)^{\frac{a}{1-a}}$.

  This already implies the statement for $C \defs c_1 (1 + c_2)$.
\end{proof}

In order to be able to apply Lemma~\ref{lm:ehrling},
we first rewrite the dissipative term in \eqref{eq:ddt_wp:statement}.
\begin{lemma} \label{lm:diss}
  There are $c_1, c_2 \gt 0$ and $p_0 \ge 1$ such that for all $p \ge p_0$ we have
  \begin{align*} 
    \left( \Omega \ni x \mapsto |x|^{-\frac{(m-1)\alpha}{2}} w^{\frac{p+m-1}{2}}(x, t) \right) \in \sob12
  \end{align*}
  for all $t \in (0, \tmax)$
  and
  \begin{align*}
          - p^2 \intom |x|^{-(m-1)\alpha} w^{p+m-3} |\nabla w|^2
    &\le  - c_1 \intom \left| \nabla \xw \right|^2
          + c_2 \left( \intom \left( |x|^{-\mu_1} w^{p+\gamma_1} \right)^{\kappa_1} \right)^\frac1{\kappa_1}
  \end{align*}
  in $(0, T)$,
  where $\mu_1$, $\gamma_1$ and $\kappa_1$
  are as in \eqref{eq:ddt_wp:def_mu}, \eqref{eq:ddt_wp:def_gamma} and \eqref{eq:ddt_wp:def_kappa}, respectively.
\end{lemma}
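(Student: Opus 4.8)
The plan is to reduce everything to the single function $\xi \defs |x|^{-\frac{(m-1)\alpha}{2}} w^{\frac{p+m-1}{2}}$ appearing in the statement. Writing $a \defs \frac{(m-1)\alpha}{2}$ and $b \defs \frac{p+m-1}{2}$ and recalling \eqref{eq:def_w}, I would first note that
\[
  \xi = |x|^{-a} (|x|^\alpha u)^b = |x|^{\frac{\alpha p}{2}} u^b
  \quad \text{in $\ombar \setminus \{0\}$.}
\]
Since $u(\cdot, t) \in \con2$ for each fixed $t$, the function $\xi$ is continuous on $\ombar$ and, as soon as $p$ is large enough that $b \ge 1$, continuously differentiable on $\ombar \setminus \{0\}$ with $|\nabla \xi| \le c |x|^{\frac{\alpha p}{2} - 1}$ near the origin. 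Because $n \ge 2$ forces $\frac{\alpha p}{2} - 1 \gt -\frac n2$, this classical gradient belongs to $\leb 2$, and as a single point has zero $W^{1,2}$-capacity in dimension $n \ge 2$, I conclude $\xi \in \sob12$, which is the first claim.

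For the inequality, I would differentiate $\xi$ away from the origin, obtaining
\[
  \nabla \xi = b |x|^{-a} w^{b-1} \nabla w - a |x|^{-a-1} w^b \nabla |x|,
\]
and, using $|\nabla |x|| = 1$, expand the square into the pointwise identity
\[
  |\nabla \xi|^2
  = b^2 |x|^{-(m-1)\alpha} w^{p+m-3} |\nabla w|^2
  - 2ab |x|^{-(m-1)\alpha - 1} w^{p+m-2} \nabla w \cdot \nabla |x|
  + a^2 |x|^{-(m-1)\alpha - 2} w^{p+m-1}.
\]
Integrating over $\Omega$ relates $\intom |\nabla \xi|^2$, the dissipative term, a cross term, and $\intom |x|^{-(m-1)\alpha - 2} w^{p+m-1}$; by \eqref{eq:ddt_wp:def_mu}--\eqref{eq:ddt_wp:def_kappa} this last integral is exactly $\left( \intom ( |x|^{-\mu_1} w^{p+\gamma_1} )^{\kappa_1} \right)^{1/\kappa_1}$.

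The cross term I would control by Young's inequality applied pointwise (again with $|\nabla |x|| = 1$), namely $2|a|b\, |x|^{-(m-1)\alpha-1} w^{p+m-2} |\nabla w| \le b^2 |x|^{-(m-1)\alpha} w^{p+m-3} |\nabla w|^2 + a^2 |x|^{-(m-1)\alpha-2} w^{p+m-1}$, which after integration gives
\[
  \intom |\nabla \xi|^2
  \le 2 b^2 \intom |x|^{-(m-1)\alpha} w^{p+m-3} |\nabla w|^2
  + 2 a^2 \intom |x|^{-(m-1)\alpha - 2} w^{p+m-1}.
\]
Rearranging for the dissipative term and multiplying by $-p^2$ then yields the asserted estimate with $c_1 \defs 1$, as soon as $\frac{p^2}{2b^2} = \frac{2p^2}{(p+m-1)^2} \ge 1$ and $\frac{p^2 a^2}{b^2} = \frac{4 a^2 p^2}{(p+m-1)^2}$ stays bounded; both hold for all $p$ beyond a threshold $p_0$, since these ratios tend to $2$ and $4a^2$ as $p \nea \infty$, and the second bound fixes $c_2$ (any positive constant suffices in the degenerate case $m = 1$, where $a = 0$).

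The calculations themselves are routine; the genuinely delicate points are the $\sob12$-membership across the origin and the decision to handle the cross term by Young's inequality rather than by integration by parts. Integration by parts would also reproduce the $\mu_1$-integral, but only at the expense of a boundary contribution $\int_{\partial \Omega} w^{p+m-1} |x|^{-(m-1)\alpha-1} \frac{x \cdot \nu}{|x|}$ which, on a general domain and without any a priori control of $w$ on $\partial \Omega$, cannot be discarded; the purely pointwise Young route sidesteps this entirely.
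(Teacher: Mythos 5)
Your proof is correct, and its skeleton is the same as the paper's: introduce $\xi \defs |x|^{-\frac{(m-1)\alpha}{2}}w^{\frac{p+m-1}{2}}$, expand $\nabla\xi$, and trade the dissipative term for $-\intom|\nabla\xi|^2$ plus exactly the $\mu_1$-integral, absorbing the factors $\frac{p^2}{(p+m-1)^2}$ into constants once $p$ is large. The genuine difference is the handling of the cross term, and here your version is in fact the rigorous one. With your $a=\frac{(m-1)\alpha}{2}$, $b=\frac{p+m-1}{2}$, the paper asserts the exact identity
\begin{align*}
  -\intom |x|^{-(m-1)\alpha}w^{p+m-3}|\nabla w|^2
  = -\frac{1}{b^2}\intom|\nabla\xi|^2
    + \frac{a^2}{b^2}\intom|x|^{-(m-1)\alpha-2}w^{p+m-1},
\end{align*}
which amounts to expanding $|\nabla\xi|^2$ and silently discarding the mixed term $-2ab\,|x|^{-(m-1)\alpha-1}w^{p+m-2}\nabla w\cdot\nabla|x|$; as an identity this is false in general (it holds only for $m=1$, i.e.\ $a=0$), since the mixed term has no sign and its integral does not vanish. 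Your pointwise Young estimate repairs exactly this gap: it yields the same inequality with the same $c_2$ and with $c_1$ merely halved (your $c_1=1$ for large $p$ versus the paper's $\frac{16}{9}$), which is immaterial for the statement. Your closing remark is also on target: integrating the mixed term by parts instead would produce $a(n-2a-2)\intom|x|^{-2a-2}w^{2b}$ plus a boundary integral weighted by $x\cdot\nu$, and since $w$ is not controlled on $\partial\Omega$ a priori (and $\Omega$ is a general smooth domain here, not necessarily star-shaped), that boundary term cannot be discarded, so Young is the right tool. The one place you work harder than necessary is the $\sob12$-membership: writing $\xi=|x|^{\frac{\alpha p}{2}}u^{\frac{p+m-1}{2}}$, both factors are $C^1$ up to the origin as soon as $p>\max\{\frac{2}{\alpha},\,3-m\}$, so $\xi\in\con1\subset\sob12$ directly (this is what the paper does); your continuity-plus-$L^2$-gradient-plus-capacity argument is valid but not needed.
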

\begin{proof}
  Note first that for $x \in \Omega$ and $t \in (0, T)$ we have
  \begin{align*}
      |x|^{-\frac{(m-1)\alpha}{2}} w^{\frac{p+m-1}{2}}(x, t)
    = |x|^{\frac{\alpha p}{2}} u^{\frac{p+m-1}{2}}(x, t),
  \end{align*}
  hence
  \begin{align*}
    \left( \Omega \ni x \mapsto |x|^{-\frac{(m-1)\alpha}{2}} w^{\frac{p+m-1}{2}}(x, t) \right) \in \con1 \subset \sob12
  \end{align*}
  for all $p \gt p_1 \defs \max\{\frac{2}{\alpha}, 3-m\}$ and all $t \in (0, \tmax)$.
  
  Thus, for $p \ge p_1$, we may calculate
  \begin{align*}
    &\pe  - \intom |x|^{-(m-1)\alpha} w^{p+m-3} |\nabla w|^2 \\
    &=    - \frac{4}{(p+m-1)^2} \intom \left| \nabla\left( |x|^{-\frac{(m-1)\alpha}{2}} w^{\frac{p+m-1}{2}} \right) \right|^2 \\
    &\pe  + \frac{((m-1)\alpha)^2}{(p+m-1)^2} \intom |x|^{-(m-1)\alpha - 2} w^{p+m-1} |\nabla |x||^2 
  \end{align*}
  in $(0, T)$.

  Because of $|\nabla |x|| \equiv 1$ in $\Omega \setminus \{0\}$ and by the definition of $\mu_1, \gamma_1$ and $\kappa_1$
  we have therein
  \begin{align*}
      \intom |x|^{-(m-1)\alpha - 2} w^{p+m-1} |\nabla |x||^2 
    = \left( \intom \left( |x|^{-\mu_1} w^{p+\gamma_1} \right)^{\kappa_1} \right)^\frac1{\kappa_1}
  \end{align*}
  in $(0, T)$ for all $p \ge 1$.

  Moreover, setting $p_2 \defs 2|m-1|$, we have $\frac94 p^2 \ge (p+m-1)^2 \ge \frac14 p^2$ for all $p \ge p_2$,
  so that the statement follows for
  $c_1 \defs \frac{16}{9}$,
  $c_2 \defs 4((m-1)\alpha)^2$
  and $p_0 \defs \max\{1, p_1, p_2\} + 1$.
\end{proof}

A first application of Lemma~\ref{lm:lp_bdd_w} and Lemma~\ref{lm:ehrling} shows
that the dissipative term $\intom \left|\nabla \xw \right|^2$
can be basically turned into $\intom w^p$.
This is the only place where we (directly) need the second condition in \eqref{eq:pw_scalar:cond_m_q},
namely that $m \gt \frac{n-2 \pu}{n}$.

\begin{lemma} \label{lm:wp_dissipitative}
  For given $\eps \gt 0$ and $s \in (0, 2)$,
  we may find $C \gt 0$ and $p_0 \ge 1$ such that
  \begin{align} \label{eq:wp_dissipitative:statement}
    \intom w^p \le \eps \intom \left| \nabla \xw \right|^2 + C \left( \intom \xw^s \right)^\frac2s + C
  \end{align}
  for all $p \ge p_0$ in $(0, T)$.
\end{lemma}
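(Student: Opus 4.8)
The plan is to establish the interpolation inequality \eqref{eq:wp_dissipitative:statement} by recognizing $\intom w^p$ as an $L^r$-norm (raised to a power) of the function $\varphi \defs \xw$ and then applying the Ehrling-type estimate from Lemma~\ref{lm:ehrling} with the low norm controlled via Lemma~\ref{lm:lp_bdd_w}. Concretely, since
\begin{align*}
  \varphi^{\frac{2p}{p+m-1}} = |x|^{-(m-1)\alpha \cdot \frac{p}{p+m-1}} w^p,
\end{align*}
we have $\intom w^p \le \intom |x|^{(m-1)\alpha \cdot \frac{p}{p+m-1}} \varphi^{\frac{2p}{p+m-1}}$, and near the origin the weight $|x|^{(m-1)\alpha \cdot \frac{p}{p+m-1}}$ is bounded by a constant when $m \ge 1$ (and otherwise the weight is handled by the bound on $\Omega$, after splitting into $B_1(0)$ and its complement as in Lemma~\ref{lm:D_S_est}). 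Thus up to a constant $c_1 \gt 0$ independent of $p$ we obtain $\intom w^p \le c_1 \|\varphi\|_{L^r(\Omega)}^r$ with exponent $r \defs \frac{2p}{p+m-1}$.

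\textbf{Applying Ehrling with the right triple of exponents.}
First I would verify that $r = \frac{2p}{p+m-1} \lt \frac{2n}{(n-2)_+}$ for all large $p$: since $r \nea 2$ as $p \nea \infty$ and $\frac{2n}{(n-2)_+} \gt 2$ for $n \ge 2$, this holds once $p \ge p_0$. Next, given the prescribed $s \in (0,2)$, I must guarantee $s \lt r$; because $r \nea 2 \gt s$, this again holds for $p$ large (increasing $p_0$ if necessary). With $s \lt r \lt \frac{2n}{(n-2)_+}$ in hand, Lemma~\ref{lm:ehrling} yields $a \in (0,1)$ and a constant with
\begin{align*}
  \|\varphi\|_{L^r(\Omega)} \le \tilde\eps \|\nabla \varphi\|_{L^2(\Omega)} + C_0 \min\{1, \tilde\eps\}^{-\frac{a}{1-a}} \|\varphi\|_{L^s(\Omega)}
\end{align*}
for every $\tilde\eps \gt 0$. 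The role of the hypothesis $m \gt \frac{n-2\pu}{n}$ enters precisely here: it is what forces the low exponent $s$ to stay compatible with the $\pu$-dependent integrability supplied by Lemma~\ref{lm:lp_bdd_w}, so that the resulting power of $\|\varphi\|_{L^s}$ can ultimately be absorbed into a bounded quantity.

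\textbf{Closing the estimate.}
Raising the Ehrling inequality to the power $r$ and using the elementary bound $(a+b)^r \le 2^r(a^r + b^r)$, I would write $\|\varphi\|_{L^r}^r \le 2^r \tilde\eps^r \|\nabla\varphi\|_{L^2}^r + 2^r C_0^r \min\{1,\tilde\eps\}^{-\frac{ar}{1-a}} \|\varphi\|_{L^s}^r$. The gradient term appears to the power $r \lt 2$, so a further application of Young's inequality (with exponents $\frac2r, \frac2{2-r}$) converts $\|\nabla\varphi\|_{L^2}^r$ into $\delta \|\nabla\varphi\|_{L^2}^2 + C(\delta)$; choosing $\tilde\eps$ and $\delta$ small relative to the target $\eps$ absorbs everything into $\eps \intom |\nabla\varphi|^2$ plus a constant. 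The surviving low-norm term contributes $C \big(\intom \varphi^s\big)^{r/s}$, which I would bound by $C\big(\intom\varphi^s\big)^{2/s} + C$ using once more that $r \lt 2$ together with Young's inequality (since $(\intom\varphi^s)^{1/s}$ is finite and the exponent $r/s$ is dominated by $2/s$). This produces exactly the right-hand side of \eqref{eq:wp_dissipitative:statement}.

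\textbf{Main obstacle.}
The delicate point is uniformity of all constants in $p$. The exponent $r = \frac{2p}{p+m-1}$ varies with $p$, and so do the Gagliardo--Nirenberg constant and the interpolation exponent $a$ emerging from Lemma~\ref{lm:ehrling}; I must check that as $p \nea \infty$ these stabilize (using $r \nea 2$) rather than degenerating, so that a single $C$ works for all $p \ge p_0$. I expect this $p$-uniformity—rather than any single inequality—to be the real work, since the entire iteration scheme downstream relies on constants that do not blow up with $p$.
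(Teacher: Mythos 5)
Your argument has a fatal gap in the case $m \lt 1$, which is expressly permitted by \eqref{eq:pw_scalar:cond_m_q} (since $\pu \ge 1$ gives $\frac{n-2\pu}{n} \le \frac{n-2}{n} \lt 1$) and is in fact the only nontrivial case. Writing $\varphi \defs \xw$ and $r \defs \frac{2p}{p+m-1}$, the identity is $w^p = |x|^{\frac{(m-1)\alpha p}{p+m-1}} \varphi^{r}$, and for $m \lt 1$ this weight has a \emph{negative} exponent, i.e.\ it blows up at the origin. It cannot be bounded by a constant on $\Omega$, and the splitting into $B_1(0)$ and its complement borrowed from Lemma~\ref{lm:D_S_est} does not help: that device bounds a \emph{less} singular weight by a \emph{more} singular one near $0$, which is the opposite of what you need. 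Worse, since $\alpha$ may be arbitrarily large, $(1-m)\alpha$ can exceed $n$, so the weight need not even be integrable; no constant $c_1$ with $\intom w^p \le c_1 \|\varphi\|_{\leb r}^r$ exists in general. The paper resolves precisely this case by pairing the singular weight with part of $w$ via H\"older's inequality against the mass bound of Lemma~\ref{lm:lp_bdd_w}:
\begin{align*}
  \intom w^p
  \le \left( \intom |x|^{-\alpha \pu} w^{\pu} \right)^\frac1\lambda
      \left( \intom \xw^{r} \right)^\frac{\lambda-1}{\lambda}
  \le M^\frac1\lambda \left( \intom \xw^{r} \right)^\frac2r
\end{align*}
with $\lambda \defs \frac{\pu}{1-m}$ and the \emph{fixed}, $p$-independent exponent $r \defs \frac{2\pu}{m-1+\pu} \gt 2$; the hypothesis $m \gt \frac{n-2\pu}{n}$ is exactly what guarantees $r \lt \frac{2n}{n-2}$, so that Lemma~\ref{lm:ehrling} applies with the pair $(s, r)$. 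Your stated role for this hypothesis (``compatibility of $s$ with the $\pu$-dependent integrability'') is not an argument, and tellingly Lemma~\ref{lm:lp_bdd_w} never actually enters any inequality in your proposal.

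A secondary issue: even for $m \ge 1$, where your reduction is valid, your exponent $r$ depends on $p$, so the constants produced by Lemma~\ref{lm:ehrling} depend on $p$ as well; you flag this as ``the real work'' but do not carry it out. The paper sidesteps it entirely: for $m \ge 1$ it first uses $w^p \le w^{p+m-1} + 1$ and the then-bounded weight to get $\intom w^p \le R^{(m-1)\alpha} \intom \xw^2 + |\Omega|$, and applies Lemma~\ref{lm:ehrling} with the fixed pair $(s, 2)$, making all constants automatically $p$-independent and rendering your extra Young-inequality steps for the gradient term and the low norm unnecessary.
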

\begin{proof}
  Fix $\eps \gt 0$ as well as $s \in (0, 2)$ arbitrarily 
  and $p_0$ as given by Lemma~\ref{lm:diss}.
  We divide the proof in two parts.

  \emph{Case 1: $m \ge 1$.}
    Young's inequality and Lemma~\ref{lm:ehrling} (with $r = 2 \lt \frac{2n}{n-2}$) imply
    \begin{align*}
            \intom w^p
      &\le  \intom w^{p+m-1} + |\Omega| \\
      &\le  R^{(m-1)\alpha} \intom \xw ^2 + |\Omega| \\
      &\le  \eps \intom \left| \nabla \xw \right|^2 + c_1 \left( \intom \xw^s \right)^\frac2s + |\Omega|
    \end{align*}
    in $(0, T)$ for some $c_1 \gt 0$
    and thus \eqref{eq:wp_dissipitative:statement} for $C \defs \max\{c_1, |\Omega|\}$.

  \emph{Case 2: $m \lt 1$.}
    Since \eqref{eq:pw_scalar:cond_m_q} and $n \ge 2$ assert
    $m \gt \frac{n-2\pu}{n} \ge 1 - \pu$,
    we have $r \defs \frac{2\pu}{m-1+\pu} \in (2, \frac{2n}{n-2})$
    and $\lambda \defs \frac{\pu}{1-m} \in (1, \infty)$.
    We then obtain
    \begin{align}
            \intom w^p \label{eq:wp_dissipitative:m_lt_1_1}
      &\le  \left( \intom |x|^{-\alpha \pu} w^{\pu}  \right)^\frac1\lambda
            \left( \intom |x|^\frac{\alpha \pu}{\lambda-1} w^\frac{p \lambda - \pu}{\lambda-1} \right)^\frac{\lambda-1}\lambda
       \le  M^\frac1\lambda
            \left( \intom \xw^r \right)^\frac{\lambda-1}\lambda
    \end{align}
    for all $p \ge 1$ in $(0, T)$
    by Hölder's inequality as well as Lemma~\ref{lm:lp_bdd_w} and because of
    \begin{align*} 
        \frac{\alpha \pu}{\lambda-1} \cdot \left(-\frac{2}{(m-1)\alpha r} \right)
      = \frac{(m-1)\alpha \pu}{m-1+\pu} \cdot \frac{m-1+\pu}{(m-1)\alpha \pu}
      = 1
    \end{align*}
    as well as
    \begin{align*}
        \frac{p\lambda - \pu}{\lambda - 1} \cdot \frac{2}{(p+m-1) r}
      = \frac{(m-1) (\frac{p\pu}{m-1} + \pu)}{m-1+\pu} \cdot \frac{m-1+\pu}{(p+m-1)\pu}
      = 1.
    \end{align*}

    Noting that $\frac{r(\lambda-1)}{\lambda} = 2$, we again employ Lemma~\ref{lm:ehrling} to see that
    \begin{align} \label{eq:wp_dissipitative:m_lt_1_2}
      &\pe \left( \intom \xw^r \right)^\frac{(\lambda-1)}{\lambda} \notag \\
      &=  \left( \intom \xw^r \right)^\frac2r \notag \\
      &\le \frac{\eps}{M^\frac1\lambda} \intom \left| \nabla \xw \right|^2 + c_2 \left( \intom \xw^s \right)^\frac2s
    \end{align}
    holds in $(0, T)$ for some $c_2 \gt 0$.
    The desired estimate \eqref{eq:wp_dissipitative:statement} is then a direct consequence
    of \eqref{eq:wp_dissipitative:m_lt_1_1} and \eqref{eq:wp_dissipitative:m_lt_1_2}.
\end{proof}

We are now prepared to prove
\begin{lemma} \label{lm:lp_ls_bound}
  For any $0 \lt s \lt s_0 \defs \min\{\frac{2n}{n-2}, \frac{1}{(m-1)_+}\}$,
  we can find $C \gt 0$, $p_0 \gt 1$ and $\nu \ge 1$
  such that for all $p \ge p_0$
  \begin{align} \label{eq:lp_ls_bound:functional}
        \ddt \intom w^p + \intom w^p
    \le C p^\nu + C p^\nu \left(\intom w^{(p+m-1)s-1} \right)^\frac1s
  \end{align}
  in $(0, T)$.
\end{lemma}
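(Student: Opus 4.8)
The plan is to turn the differential inequality of Lemma~\ref{lm:ddt_wp} into the asserted functional inequality by first installing a genuine dissipative term through Lemma~\ref{lm:diss} and then absorbing the three right-hand side contributions of \eqref{eq:ddt_wp:statement}, as well as the extra summand $\intom w^p$, into this dissipation up to lower order terms. Concretely, I would multiply \eqref{eq:ddt_wp:statement} by $p^2$ and invoke Lemma~\ref{lm:diss} to replace the weighted gradient term by $-c\intom|\nabla\xw|^2$ with a $p$-independent $c>0$ (the additional summand produced by Lemma~\ref{lm:diss} being of the same type as the $i=1$ term, hence folded into it). For $p$ large this yields
\begin{align*}
  \ddt \intom w^p + c \intom \left| \nabla \xw \right|^2
  \le C p^2 \sum_{i=1}^3 \left( \intom \left( |x|^{-\mu_i} w^{p+\gamma_i} \right)^{\kappa_i} \right)^{\frac1{\kappa_i}} + C p^2
\end{align*}
in $(0,T)$, which is the starting point for the absorption.

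Next I would estimate each of the three terms separately. Writing the integrand as $|x|^{-\mu_i\kappa_i}w^{(p+\gamma_i)\kappa_i}$ and recalling $u^{\pu}=|x|^{-\alpha\pu}w^{\pu}$, I would apply Hölder's inequality so as to peel off a factor $u^{\pu}$ (whose integral is controlled by $M$ via Lemma~\ref{lm:lp_bdd_w}) and a remaining power of $\xw$. The exponents in this split depend on $p$, but as $p\nea\infty$ they converge to the quantities $\lambda_i$ and $r_i\defs\frac{2\kappa_i\lambda_i}{\lambda_i-1}$ of Lemma~\ref{lm:params}; since that lemma provides $\lambda_i\in(1,\infty)$ and $r_i<\frac{2n}{n-2}$ with \emph{strict} inequalities, continuity guarantees that for all $p\ge p_0$ the split is an admissible Hölder pairing and $r_i<\frac{2n}{n-2}$ persists. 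One thus obtains
\begin{align*}
  \left( \intom \left( |x|^{-\mu_i} w^{p+\gamma_i} \right)^{\kappa_i} \right)^{\frac1{\kappa_i}}
  \le M^{b_i} \left( \intom \xw^{r_i} \right)^{\frac{\theta_i}{r_i}},
\end{align*}
where $b_i\in(0,1)$ is bounded and a short computation gives that the effective power $\theta_i\to 2$ while in fact $\theta_i<2$ (because $\mu_i>\alpha\gamma_i$ holds for each $i=1,2,3$). Now Lemma~\ref{lm:ehrling}, with high exponent $r_i<\frac{2n}{n-2}$ and a small low exponent $\sigma_i\le s$, together with Young's inequality, lets me dominate the right-hand side by $\eps\intom|\nabla\xw|^2$ plus a multiple of $\|\xw\|_{\leb{\sigma_i}}^{\theta_i}$; choosing $\eps\sim p^{-2/\theta_i}$ keeps the Ehrling constant polynomial in $p$ while rendering all gradient contributions absorbable into $c\intom|\nabla\xw|^2$. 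Finally, adding $\intom w^p$ on the left and estimating it through Lemma~\ref{lm:wp_dissipitative} (with an exponent in $(0,2)$, again absorbing the ensuing gradient term), and passing to the common low exponent $s$ via $\|\xw\|_{\leb\sigma}\le C\|\xw\|_{\leb s}$ on the bounded domain $\Omega$, I arrive at
\begin{align*}
  \ddt \intom w^p + \intom w^p
  \le C p^\nu \left( \intom \xw^s \right)^{\frac{2}{s}} + C p^\nu
\end{align*}
for a suitable $\nu\ge1$.

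It then remains to convert the low order term $\left(\intom\xw^s\right)^{2/s}$ into the stated quantity $\left(\intom w^{(p+m-1)s-1}\right)^{1/s}$. Writing out $\intom\xw^s=\intom |x|^{-\frac{(m-1)\alpha s}{2}}w^{\frac{(p+m-1)s}{2}}$, for $m<1$ the weight is bounded, while for $m\ge1$ I would once more peel off a factor $u^{\pu}$ through Hölder (using Lemma~\ref{lm:lp_bdd_w}) to neutralise the singular weight; in either case a concluding Hölder/Jensen step extracts $\intom w^{(p+m-1)s-1}$ with an exponent $d<\tfrac12$. Here the constraint $s<s_0\le\frac{1}{(m-1)_+}$ enters decisively: it forces $(m-1)s<1$, which is exactly what yields $d<\tfrac12$ and hence $d\,\theta_i/s<1/s$, so that $\left(\intom\xw^s\right)^{2/s}\le C\left(\intom w^{(p+m-1)s-1}\right)^{1/s}+C$ and \eqref{eq:lp_ls_bound:functional} follows.

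The step I expect to be the main obstacle is the bookkeeping of the $p$-dependence. Every estimate above carries $p$-dependent exponents and constants, yet the whole content of the lemma is that the final constant grows only polynomially in $p$; this dictates the calibration $\eps\sim p^{-2/\theta_i}$ in the Ehrling step and demands a verification that all exponents ($\lambda_i>1$, $r_i<\frac{2n}{n-2}$, $\theta_i<2$, and $d<\tfrac12$) hold \emph{uniformly} for $p\ge p_0$. This uniformity is not automatic: it rests on the strict inequalities furnished by Lemma~\ref{lm:params} together with the convergence of the $p$-dependent exponents to their limiting values, so that a single threshold $p_0$ suffices for all three terms at once.
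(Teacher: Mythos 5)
Your proposal is correct and follows essentially the same route as the paper's proof: starting from Lemma~\ref{lm:ddt_wp} combined with Lemma~\ref{lm:diss}, peeling off a mass factor controlled by Lemma~\ref{lm:lp_bdd_w} via $p$-dependent H\"older exponents whose limits are exactly the quantities of Lemma~\ref{lm:params} (so that the strict inequalities there yield a uniform threshold $p_0$), absorbing through Lemma~\ref{lm:ehrling} and Lemma~\ref{lm:wp_dissipitative} at a cost polynomial in $p$, and finally using $(m-1)s\lt 1$ together with Lemma~\ref{lm:lp_bdd_w} to convert $\left(\intom \xw^s\right)^{2/s}$ into $\left(\intom w^{(p+m-1)s-1}\right)^{1/s}$. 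The small deviations---keeping the exponent $\theta_i \lt 2$ in the Ehrling step rather than first reducing it to exactly $2$ via $\xi^A \le 1+\xi^B$, and your calibration $\eps \sim p^{-2/\theta_i}$ in place of the paper's $\eps \sim p^{-1}$---are cosmetic.
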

\begin{proof}
  By Lemma~\ref{lm:ddt_wp} and Lemma~\ref{lm:diss} there are $c_1, c_2 \gt 0$ and $p_1 \gt 1$ such that for all $p \ge p_1$
  \begin{align} \label{eq:lp_ls_bound:xw_0}
          \ddt \intom w^p
          + c_1 \intom \left|\nabla \xw \right|^2
    &\le  c_2 p^2 \sum_{i=1}^3 \left( \intom \left( |x|^{-\mu_i} w^{p+\gamma_i} \right)^{\kappa_i} \right)^\frac1{\kappa_i}
          + c_2 p^2
  \end{align}
  holds throughout $(0, T)$,
  where $\mu_i, \gamma_i, \kappa_i$, $i \in \{1, 2, 3\}$,
  are given by \eqref{eq:ddt_wp:def_mu}, \eqref{eq:ddt_wp:def_gamma} and \eqref{eq:ddt_wp:def_kappa}, respectively.

  Our goal is to estimate the terms on the right hand side in \eqref{eq:lp_ls_bound:xw_0} against the dissipative term therein.
  As a starting point, we use Hölder's inequality and Lemma~\ref{lm:lp_bdd_w}
  to compute for $\lambda \gt 1$, $p \ge 1$ and $i \in \{1, 2, 3\}$
  \begin{align} \label{eq:lp_ls_bound:xw_est_1}
          \left( \intom \left( |x|^{-\mu_i} w^{p+\gamma_i} \right)^{\kappa_i} \right)^\frac1{\kappa_i}
    &=    \left(
            \intom |x|^{-\frac{\alpha \pu}{\lambda}} w^\frac{\pu}{\lambda}
              \cdot |x|^{-\mu_i \kappa_i + \frac{\alpha \pu}{\lambda}} w^{(p+\gamma_i)\kappa_i - \frac{\pu}{\lambda}}
          \right)^\frac1{\kappa_i} \notag \\
    &\le  M^\frac1{\kappa_i \lambda}
          \left( \intom
            |x|^\frac{-\mu_i \kappa_i \lambda + \alpha \pu}{\lambda - 1}
            w^\frac{(p+\gamma_i)\kappa_i \lambda - \pu}{\lambda -1}
          \right)^\frac{\lambda-1}{\kappa_i \lambda}
  \end{align}
  in $(0, T)$.

  For $p \in (1, \infty)$ and $i \in \{1, 2, 3\}$ set
  \begin{align*}
    \lambda_i(p) \defs
    \begin{cases}
      \dfrac{\alpha p \pu}{\kappa_i [p(\mu_i - (m-1) \alpha) + (m-1) (\mu_i - \alpha \gamma_i) ]_+}, & p \lt \infty, \\[0.9em]
      \dfrac{\alpha \pu}{\kappa_i (\mu_i - (m-1) \alpha)_+}, & p = \infty,
    \end{cases}
  \end{align*}
  then $\lim_{p \nea \infty} \lambda_i(p) = \lambda_i(\infty)$.
  Lemma~\ref{lm:params} asserts $\lambda_i(\infty) \in (1, \infty)$,
  hence there is $p_2 \ge p_1$ such that also $\lambda_i(p) \in (1, \infty)$ for all $p \ge p_2$.

  Setting furthermore
  \begin{align*}
          b_i(p)
    \defs 2 \cdot \frac{\alpha p - (\mu_i - \alpha \gamma_i)}{\alpha p},
    \quad i \in \{1, 2, 3\},
  \end{align*}
  and choosing $\lambda = \lambda_i(p)$ in \eqref{eq:lp_ls_bound:xw_est_1},
  we obtain
  \begin{align} \label{eq:lp_ls_bound:xw_1}
          \left( \intom \left( |x|^{-\mu_i} w^{p+\gamma_i} \right)^{\kappa_i} \right)^\frac1{\kappa_i}
     \le  \max\{M, 1\}
          \left( \intom \left(
            |x|^\frac{-(m-1)\alpha}{2}
            w^\frac{p+m-1}{2}
          \right)^{\frac{\kappa_i \lambda_i(p)}{\lambda_i(p)-1} b_i(p)}
          \right)^\frac{\lambda_i(p)-1}{\kappa_i \lambda_i(p)}
  \end{align}
  in $(0, T)$ for all $p \ge p_2$ and $i \in \{1, 2, 3\}$
  since
  \begin{align*}
    &\pe  \frac{-\mu_i \kappa_i \lambda_i(p) + \alpha \pu}{\lambda_i(p)-1}
          \cdot \frac{2(\lambda_i(p)-1)}{-(m-1)\alpha \kappa_i \lambda_i(p) b_i(p)} \\
    &=    \frac{-\mu_i + \frac{\alpha \pu}{\kappa_i \lambda_i(p)}}{-(m-1)\alpha}
          \cdot \frac{\alpha p}{\alpha p - (\mu_i - \alpha \gamma_i)} \\
    &=    \frac{-\mu_i \alpha p + \alpha[p(\mu_i - (m-1) \alpha) + (m-1) (\mu_i - \alpha \gamma_i)]}{-(m-1)\alpha (\alpha p - (\mu_i - \alpha \gamma_i))}
    =     1
  \end{align*}
  and
  \begin{align*}
    &\pe  \frac{(p+\gamma_i)\kappa_i \lambda_i(p) - \pu}{\lambda_i(p) - 1} \cdot \frac{2(\lambda_i(p)-1)}{(p+m-1) \kappa_i \lambda_i(p) b_i(p)} \\
    &=    \frac{(p+\gamma_i) - \frac{\pu}{\kappa_i \lambda_i(p)}}{p+m-1}
          \cdot \frac{\alpha p}{\alpha p - (\mu_i - \alpha \gamma_i)} \\
    &=    \frac{(p+\gamma_i)\alpha p - [p(\mu_i - (m-1) \alpha) + (m-1) (\mu_i - \alpha \gamma_i)] }{(p+m-1)(\alpha p - (\mu_i - \alpha \gamma_i))}
    =     1 
  \end{align*}
  for all $p \ge p_2$ and $i \in \{1, 2, 3\}$.

  Lemma~\ref{lm:params} further asserts
  \begin{align*}
        \lim_{p \nea \infty} \frac{2\kappa_i \lambda_i(p)}{\lambda_i(p)-1}
    =   \frac{2\kappa_i \lambda_i(\infty)}{\lambda_i(\infty)-1}
    \lt \frac{2n}{n-2}
  \end{align*}
  for all $i \in \{1, 2, 3\}$.
  As moreover \eqref{eq:ddt_wp:def_mu} and \eqref{eq:ddt_wp:def_gamma} entail
  \begin{align*}
    \mu_i - \alpha \gamma_i = 
    \begin{cases}
      2, & i = 1, \\
      2\beta, & i = 2, \\
      1 + \beta, & i = 3
    \end{cases}
  \end{align*}
  and hence $\beta_i(p) \lt 2$ for all $p \ge 1$ and $i \in \{1, 2, 3\}$,
  we may choose $p_3 \ge p_2$ and $r \in (s, \frac{2n}{n-2})$ such that still
  \begin{align*}
    \frac{\kappa_i \lambda_i(p)}{\lambda_i(p)-1} b_i(p) \le r
  \end{align*}
  for all $i \in \{1, 2, 3\}$ and all $p \ge p_3$.

  By Hölder's inequality and the elementary inequality $\xi^A \le 1 + \xi^B$ for $\xi \ge 0$ and $0 \lt A \lt B$,
  we have
  \begin{align} \label{eq:lp_ls_bound:xw_2}
          \left(
            \intom \xw^{\frac{\kappa_i \lambda_i(p)}{\lambda_i(p) - 1} b_i(p)} 
          \right)^\frac{\lambda_i(p)-1}{\kappa_i \lambda_i(p)} 
    &\le  \max\{|\Omega|, 1\} \left( \intom \xw^r \right)^\frac{b_i(p)}r \notag \\
    &\le  c_3 + c_3 \left( \intom \xw^r \right)^\frac2r
  \end{align}
  in $(0, T)$ for all $p \ge p_3$ and $i \in \{1, 2, 3\}$, where $c_3 \defs \max\{|\Omega|, 1\}$.

  Herein we may now finally apply Lemma~\ref{lm:ehrling} together with Young's inequality to obtain $c_4 \gt 0$ such that
  \begin{align} \label{eq:lp_ls_bound:xw_3}
    &\pe  \left( \intom \xw^r \right)^\frac2r \notag \\
    &\le  \frac{c_1}{6c_2c_3 p^2 \max\{M, 1\}} \intom \left|\nabla \xw \right|^2
          + c_4 p^\frac{2a}{1-a} \left( \intom \xw^s \right)^\frac2s
  \end{align}
  in $(0, T)$
  for all $p \ge p_3$.

  By combining \eqref{eq:lp_ls_bound:xw_0}, \eqref{eq:lp_ls_bound:xw_1} -- \eqref{eq:lp_ls_bound:xw_3}
  and Lemma~\ref{lm:wp_dissipitative} (with $\eps = \frac{c_1}{2}$)
  we may find $c_5 \gt 0$ such that
  \begin{align} \label{eq:lp_ls_bound:xw_4}
          \ddt \intom w^p
    +     \intom w^p
    &\le  c_5 p^\frac{4a}{1-a} 
    +     c_5 p^\frac{4a}{1-a} \left( \intom \xw^s \right)^\frac2s
  \end{align}
  in $(0, T)$ for all $p \ge p_3$.

  The assumption $s \le \frac{1}{(m-1)_+}$
  implies $\alpha - (m-1) \alpha s \ge 0$,
  thus again by Hölder's inequality and Lemma~\ref{lm:lp_bdd_w}
  \begin{align*} 
          \left( \intom \xw^s \right)^2
    &\le  \left( \intom |x|^{-\alpha} w \right)
          \left( \intom |x|^{-(m-1)\alpha s + \alpha} w^{(p+m-1)s - 1} \right) \notag \\
    &\le  M^\frac1\pu |\Omega|^\frac{\pu-1}{\pu} R^{\alpha - (m-1)\alpha s}
          \intom w^{(p+m-1)s - 1} 
  \end{align*}
  in $(0, T)$, which together with \eqref{eq:lp_ls_bound:xw_4} implies \eqref{eq:lp_ls_bound:functional}
  for some $C \gt 0$,
  $p_0 \defs p_3$ and $\nu \defs \frac{4a}{1-a}$.
\end{proof}

A direct consequence thereof is
\begin{lemma} \label{lm:w_bdd_lp}
  For all $p \in (1, \infty)$ we have
  \begin{align} \label{eq:w_bdd_lp_statement}
    \sup_{t \in (0, T)} \intom w^p(\cdot, t) \lt \infty.
  \end{align}
\end{lemma}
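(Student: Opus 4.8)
The plan is to turn the functional inequality \eqref{eq:lp_ls_bound:functional} furnished by Lemma~\ref{lm:lp_ls_bound} into genuine $L^p$ bounds by an iteration whose engine is an elementary ODE comparison. First I would reduce the claim to controlling $\sup_t \intom w^p$ for \emph{arbitrarily large} $p$ only: since $\Omega$ is bounded, the elementary inequality $\xi^{p'} \le 1 + \xi^p$ for $\xi \ge 0$ and $p' \le p$ gives $\intom w^{p'} \le |\Omega| + \intom w^p$, so a bound at a large exponent automatically covers all smaller ones. The base of the iteration is Lemma~\ref{lm:lp_bdd_w}: since $|x| \le R$ we have $|x|^{\alpha\pu} \le R^{\alpha\pu}$, whence $\sup_{t \in (0,T)} \intom w^{\pu} \le R^{\alpha\pu} \intom |x|^{-\alpha\pu} w^{\pu} \le R^{\alpha\pu} M < \infty$.

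The engine of the iteration is the following observation. Suppose that for some fixed $s \in (0, s_0)$ and some exponent $p \ge p_0$ the feedback quantity is already under control, i.e.\ $\sup_t \intom w^{(p+m-1)s-1} < \infty$. Then the right-hand side of \eqref{eq:lp_ls_bound:functional} is bounded by a constant $B$, so that $y(t) \defs \intom w^p(\cdot, t)$ obeys $y' + y \le B$ on $(0,T)$. Because \eqref{eq:pw_scalar:u0} together with $u(\cdot,0) \le u_0$ forces $w(\cdot,0) = |x|^\alpha u(\cdot,0) \le L$, the initial datum satisfies $y(0) \le L^p |\Omega|$, and an ODE comparison yields $y(t) \le \max\{L^p|\Omega|, B\}$ for all $t \in (0,T)$, i.e.\ $\sup_t \intom w^p < \infty$.

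It then remains to arrange that the feedback exponent is always a \emph{previously controlled} one. Here the decisive structural point—and the reason the scheme closes at all—is that $s_0 \le \frac{1}{(m-1)_+}$ guarantees $s(m-1) < 1$ for every $s < s_0$; choosing moreover $s < 1$ (always possible, since $\frac{2n}{n-2} > 1$ and, for $m > 1$, $s_0 \le \frac{1}{m-1}$), the affine map $q \mapsto \frac{q+1}{s} - (m-1)$ strictly exceeds its argument, with a gap tending to $\infty$. Defining $p_{k+1} \defs \frac{p_k+1}{s} - (m-1)$ makes the feedback exponent at level $p_{k+1}$ equal to $p_k$, so the engine above propagates control from $L^{p_k}$ to $L^{p_{k+1}}$, with $p_k \nea \infty$; any prescribed exponent is thus reached after finitely many steps.

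The single point requiring care—and what I expect to be the main technical obstacle—is the threshold $p \ge p_0$ built into Lemma~\ref{lm:lp_ls_bound}: the base exponent $\pu$ may lie below $p_0$, so in the very first step the feedback exponent $(p+m-1)s-1$ must be kept $\le \pu$ while the target $p$ is pushed above $p_0$. Since $\frac{\pu+1}{s} - (m-1) \to \infty$ as $s \da 0$, I would select $s$ small enough that a single step leaps from the base bound directly past $p_0$; from there the clean recursion of the previous paragraph applies verbatim, and the intermediate exponents below $p_0$ are recovered by the interpolation recorded in the first paragraph. The genuinely delicate ingredient is therefore not any one estimate but the bookkeeping ensuring that the (subcritical, thanks to $s < s_0$) feedback keeps the exponent strictly below the target, so that the iteration both terminates at each prescribed $p$ and never stalls at the threshold.
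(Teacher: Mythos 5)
Your argument is correct, and it reaches the conclusion by a genuinely different route than the paper. The paper's proof is a one-step argument with no iteration at all: after the same reduction to large $p$ (via H\"older's inequality, exactly as in your first paragraph), it chooses $s$ \emph{adapted to} the target exponent, namely $s=\frac{2}{p+m-1}$, so that the feedback exponent in \eqref{eq:lp_ls_bound:functional} equals $1$; the corresponding term is then bounded once and for all by $\intom w \le R^\alpha \intom |x|^{-\alpha} w \le R^\alpha M^{1/\pu}|\Omega|^{(\pu-1)/\pu}$ (H\"older's inequality together with Lemma~\ref{lm:lp_bdd_w}), after which a single ODE comparison, with the same initial bound $\intom w^p(\cdot,0)\le |\Omega|L^p$ that you use, concludes. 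You instead fix $s$ once and climb the ladder $p_{k+1}=\frac{p_k+1}{s}-(m-1)$ starting from the mass exponent $\pu$; this is a Moser-type bootstrap (without the need to track constants uniformly), structurally the same iteration that the paper saves for the $L^\infty$ bound in Lemma~\ref{lm:w_bdd_l_infty}. The ingredients are identical in both proofs (Lemma~\ref{lm:lp_ls_bound}, Lemma~\ref{lm:lp_bdd_w}, the bound $w(\cdot,0)\le L$, ODE comparison); the paper's adapted choice of $s$ buys brevity and dispenses with the monotonicity, termination and threshold bookkeeping your scheme requires, while your ladder never needs the feedback exponent to hit one specific value dictated by the target $p$ and is in that sense more robust.

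Concerning the point you flag as the main obstacle: as stated, Lemma~\ref{lm:lp_ls_bound} permits $p_0$ to depend on $s$, so the prescription ``choose $s$ small enough that $\frac{\pu+1}{s}-(m-1)>p_0$'' is, strictly speaking, circular unless one knows $p_0(s)=o(1/s)$ as $s\searrow 0$. An inspection of the proof of Lemma~\ref{lm:lp_ls_bound} shows that $p_0$ can in fact be chosen independently of $s$, which closes this point. Note, moreover, that the paper's own proof tacitly relies on the same uniformity, since it fixes $p_0$ \emph{before} selecting $s$ in terms of $p$; so this is a shared imprecision of both arguments rather than a gap specific to yours.
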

\begin{proof}
  Let $p_0 \gt 1$ and $s_0 \gt 0$ be as in Lemma~\ref{lm:lp_ls_bound}.
  By Hölder's inequality we may without loss of generality assume that $p \gt p_0$ with $(p+m-1)s_0 - 1 \gt 1$.
  
  Choosing $s \in (0, s_0)$ such that $(p+m-1) s - 1 = 1$
  and noting that
  \begin{align*}
        \intom w
    \le R^\alpha \intom |x|^{-\alpha} w
    \le R^\alpha \intom |x|^{-\alpha} w
    \le R^\alpha M^\frac1{\pu} |\Omega|^{\frac{\pu-1}{\pu}}
  \end{align*}
  in $(0, T)$ by Hölder's inequality and Lemma~\ref{lm:diss},
  we may apply Lemma~\ref{lm:lp_ls_bound} to obtain
  \begin{align*}
    \ddt \intom w^p \le - \intom w^p + C_p
    \quad \text{in $(0, T)$}
  \end{align*}
  for some $C_p \gt 0$
  and hence $\intom w^p \le \max\{\intom w(\cdot, 0)^p, C_p\}$.
  Since $\intom w(\cdot, 0)^p \le |\Omega| \cdot \|w(\cdot, 0)\|_{\leb \infty}^p \le |\Omega| L^p$
  by \eqref{eq:def_w} and \eqref{eq:pw_scalar:u0},
  we may conclude \eqref{eq:w_bdd_lp_statement}.
\end{proof}

Due to a well-established Moser-type iteration technique
(see \cite{AlikakosBoundsSolutionsReactiondiffusion1979} and \cite{MoserNewProofGiorgi1960} for early examples
or also \cite[Lemma~A.1]{TaoWinklerBoundednessQuasilinearParabolic2012}
for an application relevant to quasilinear Keller--Segel systems)
we can also obtain an $L^\infty$ bound for $w$.

\begin{lemma} \label{lm:w_bdd_l_infty}
  There is $C \gt 0$ such that
  \begin{align} \label{eq:w_bdd_l_infty:statement}
    \|w\|_{L^\infty(\Omega \times (0, T))} \lt C.
  \end{align}
\end{lemma}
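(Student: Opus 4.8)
The plan is to run a Moser-type iteration driven by the functional inequality of Lemma~\ref{lm:lp_ls_bound}, using the finite $L^p$ bounds of Lemma~\ref{lm:w_bdd_lp} as its starting point. First I fix some $s \in (0, \min\{1, s_0\})$; this interval is nonempty because $s_0 \gt 0$. Lemma~\ref{lm:lp_ls_bound} then provides constants $C, \nu \gt 0$ and a threshold $p_\ast \gt 1$ (renaming its $p_0$) such that
\begin{align*}
  \ddt \intom w^p + \intom w^p
  \le C p^\nu + C p^\nu \left( \intom w^{(p+m-1)s-1} \right)^\frac1s
  \quad \text{for all } p \ge p_\ast \text{ in } (0, T).
\end{align*}
The reason for also demanding $s \lt 1$ is that then, for large $p$, the exponent $(p+m-1)s - 1$ on the right is strictly \emph{smaller} than $p$, so the inequality relates an $L^p$ quantity to a lower $L^q$ quantity and can be iterated.

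Concretely I define $p_k \nea \infty$ through $(p_k + m - 1)s - 1 = p_{k-1}$, i.e. $p_k = \frac1s(p_{k-1}+1) - (m-1)$, starting from an index large enough that $p_k \ge p_\ast$; since $\frac1s \gt 1$ this grows geometrically with ratio $\frac1s$, so $p_k \sim (\frac1s)^k$. Setting $M_k \defs \max\{1, \sup_{t \in (0,T)} \intom w^{p_k}(\cdot,t)\}$, which is finite by Lemma~\ref{lm:w_bdd_lp}, the inequality for $p = p_k$ gives $\ddt y + y \le C p_k^\nu(1 + M_{k-1}^\frac1s) \sfed G_k$ for $y(t) = \intom w^{p_k}(\cdot,t)$, the right-hand side being constant in $t$. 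The elementary comparison ($y'+y \le G_k \Rightarrow y \le \max\{y(0), G_k\}$) then yields $M_k \le \max\{\intom w^{p_k}(\cdot,0),\, 2C p_k^\nu M_{k-1}^\frac1s\}$, where I used $M_{k-1} \ge 1$. Since $w(\cdot,0) = |\cdot|^\alpha u(\cdot,0) \le L$ pointwise by \eqref{eq:pw_scalar:u0}, the initial term is bounded by $|\Omega| b^{p_k}$ with $b \defs \max\{1, L\}$.

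It remains to show $\limsup_{k} M_k^{1/p_k} \lt \infty$; because $w \in C^0(\ombar \times [0,T))$ and $p_k \nea \infty$, this quantity dominates $\sup_t \|w(\cdot,t)\|_{\leb\infty} = \|w\|_{L^\infty(\Omega \times (0,T))}$, which is exactly \eqref{eq:w_bdd_l_infty:statement}. I distinguish two cases. If the initial term realizes the maximum for infinitely many $k$, then along that subsequence $M_k^{1/p_k} \le (|\Omega| b^{p_k})^{1/p_k} \ra b$. Otherwise $M_k \le 2C p_k^\nu M_{k-1}^\frac1s$ for all large $k$, and passing to $L_k \defs \ln M_k$ gives $L_k \le \frac1s L_{k-1} + c\,k$ for a suitable constant $c$ (absorbing $\nu \ln p_k \sim \nu k \ln\frac1s$ and $\ln(2C)$). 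Dividing by $(\frac1s)^k$ and summing the resulting telescoping inequality, convergence of $\sum_j j\, s^j$ (here one uses $\frac1s \gt 1$) forces $L_k \le \mathrm{const} \cdot (\frac1s)^k \sim \mathrm{const}\cdot p_k$, whence $M_k^{1/p_k} = e^{L_k/p_k}$ stays bounded. Combining both cases proves the claim.

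The main obstacle is not any individual estimate—the analytically heavy work was already done in Lemmas~\ref{lm:ddt_wp}--\ref{lm:lp_ls_bound}—but rather the bookkeeping of the iteration: selecting $s$ (hence the growth ratio $\frac1s$) so that the exponent on the right genuinely decreases, keeping $C$ and $\nu$ uniform along the iteration (automatic once $s$ is fixed), and checking that the recursion for $L_k$ only produces growth of order $(\frac1s)^k \sim p_k$ and no faster, so that $M_k^{1/p_k}$ remains controlled in the limit.
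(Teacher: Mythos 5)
Your proof is correct and follows essentially the same route as the paper's own argument: a Moser iteration driven by Lemma~\ref{lm:lp_ls_bound} with exponents defined through $p_{k-1} = (p_k+m-1)s-1$, starting values supplied by Lemma~\ref{lm:w_bdd_lp}, an ODE comparison, and the same dichotomy between the initial-data-dominated case (handled along a subsequence) and the recursion-dominated case. The only difference is bookkeeping: the paper tracks the norms $A_j = \sup_{t} \|w(\cdot,t)\|_{\leb{p_j}}$ and bounds a convergent infinite product via $\sum_i i\nu 2^{-i} \lt \infty$, whereas you track $\ln M_k$ and sum the geometric series $\sum_j j s^j$, which amounts to the same estimate.
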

\begin{proof}
  Set $s \defs \frac12 \min\{\frac1{(m-1)_+}, 1\} \lt \frac{2n}{n-2}$.
  Then Lemma~\ref{lm:lp_ls_bound} asserts the existence of $\tilde p \gt 1$, $c_1 \gt 0$ and $\nu \gt 1$ such that
  \begin{align} \label{eq:w_bdd_l_infty:functional}
    \ddt \intom w^p + \intom w^p \le c_1 p^\nu + c_1 p^\nu \left( \intom w^{(p+m-1)s-1} \right)^\frac1s
  \end{align}
  in $(0, T)$ for all $p \ge \tilde p$.
  
  Set
  \begin{align} \label{eq:w_bdd_l_infty:def_p0}
    p_0 &\defs \max\{\tilde p, 1 - (m-1) s\}
  \intertext{and} \label{eq:w_bdd_l_infty:def_pj}
    p_j &\defs \frac{p_{j-1} + 1 - (m-1) s}{s}
  \end{align}
  for $j \in \N \setminus \{0\}$.
    
  As $s \le \frac1{(m-1)_+}$ and $s \le \frac12$,
  a straightforward induction gives
  \begin{align} \label{eq:w_bdd_l_infty:pj_lower_bdd}
        p_j
    \ge \frac{p_{j-1}}{s}
    \ge \frac{p_0}{s^j}
    \ge 2^j p_0
    \ge 2^j
    \quad \text{for $j \in \N_0$},
  \end{align}
  in particular the sequence $(p_j)_{j \in \N_0}$ is increasing.
  On the other hand by \eqref{eq:w_bdd_l_infty:def_p0} and another induction,
  \begin{align} \label{eq:w_bdd_l_infty:pj_upper_bdd}
        p_j
    \le \frac{p_{j-1} + p_0}{s}
    \le \frac{2p_{j-1}}{s}
    \le \left( \frac{2}{s} \right)^j p_0
    \quad \text{for $j \in \N$}.
  \end{align}

  Since \eqref{eq:w_bdd_l_infty:def_pj} is equivalent to $p_{j-1} = (p_j+m-1)s - 1$, $j \in \N$,
  an ODE comparison argument and \eqref{eq:w_bdd_l_infty:functional} (with $p = p_j$) yield
  \begin{align*} 
        \intom w^{p_j}(\cdot, t)
    \le \max\left\{
          \intom w(\cdot, 0)^{p_j}, \,
          c_1 p_j^\nu + c_1 p_j^\nu \sup_{\tau \in (0, T)} \left( \intom w^{p_{j-1}}(\cdot, \tau) \right)^\frac1s
        \right\}
  \end{align*}
  for all $t \in (0, T]$ and all $j \in \N$.
  Note that Lemma~\ref{lm:w_bdd_lp} asserts finiteness of the right hand side therein.

  Therefore, $A_j \defs \sup_{t \in (0, T)} \|w(\cdot, t)\|_{\leb{p_j}}$, $j \in \N_0$, fulfills
  \begin{align} \label{eq:w_bdd_l_infty:functional_pj}
        A_j
    \le \max\left\{
          \|w(\cdot, 0)\|_{\leb{p_j}}, \,
          (c_1 p_j^\nu)^\frac1{p_j} \left( 1 + A_{j-1}^\frac{p_{j-1}}{s} \right)^\frac1{p_j}
        \right\}
  \end{align}
  for all $j \in \N$.
  
  As $\lim_{p \nea \infty} \|w(\cdot, 0)\|_{\leb p} = \|w(\cdot, 0)\|_{\leb \infty} \le L$
  by \eqref{eq:pw_scalar:u0} and \eqref{eq:def_w},
  there is $c_2 \ge 1$ with
  \begin{align*}
    \|w_0\|_{\leb{p_j}} \le c_2
    \quad \text{for all $j \in \N$}.
  \end{align*}

  Suppose first that there is a strictly increasing sequence $(j_k)_{k \in \N} \subset \N$ such that
  $A_{j_k} \le c_2$ for all $k \in \N$.
  As then
  \begin{align*}
        \|w(\cdot, t)\|_{\leb \infty}
    =   \lim_{k \ra \infty} \|w(\cdot, t)\|_{\leb{p_{j_k}}}
    \le c_2
  \end{align*}
  for all $t \in (0, T)$ since $\lim_{k \ra \infty} p_{j_k} = \infty$ by \eqref{eq:w_bdd_l_infty:pj_lower_bdd},
  this already implies \eqref{eq:w_bdd_l_infty:statement} for $C \defs c_2$.

  Hence, suppose now that on the contrary there is $j_0 \in \N$ such that
  $A_j \gt c_2$ for all $j \ge j_0$.
  Since then also $A_j \ge 1$ for all $j \ge j_0$
  and because of $\frac{p_j}{s} \gt 1$ for all $j \in \N_0$,
  we conclude from \eqref{eq:w_bdd_l_infty:functional_pj} that
  \begin{align*}
    A_j \le (2 c_1 p_j^\nu)^\frac1{p_j} A_{j-1}^{\frac{p_{j-1}}{p_j s}}
    \quad \text{for all $j \gt j_0$.}
  \end{align*}
  As \eqref{eq:w_bdd_l_infty:pj_lower_bdd} entails $\frac{p_{j-1}}{p_j s} \le 1$ we further obtain
  \begin{align*}
    A_j \le (c_2 p_j^\nu)^\frac1{p_j} A_{j-1}
    \quad \text{for all $j \gt j_0$},
  \end{align*}
  where $c_2 \defs 2 c_1$,
  and hence by induction and \eqref{eq:w_bdd_l_infty:pj_upper_bdd}
  \begin{align*}
        A_j
    \le \left(\prod_{i=j_0+1}^{j} (c_2 p_i^\nu)^\frac1{p_i} \right) A_{j_0}
    \le c_3^{\sum_{i=j_0+1}^j \frac{1}{p_i}} \cdot \left(\tfrac2s\right)^{\sum_{i=j_0+1}^j \frac{i \nu}{p_i}} \cdot A_{j_0}
    \quad \text{for all $j \gt j_0$}
  \end{align*}
  with $c_3 \defs c_2 p_0^\nu$.

  As therein by \eqref{eq:w_bdd_l_infty:pj_lower_bdd}
  \begin{align*}
        \sum_{i=j_0+1}^j \frac{1}{p_i}
    \le \sum_{i=j_0+1}^j \frac{i \nu}{p_i}
    \le \sum_{i=1}^\infty \frac{i \nu}{2^i}
    \sfed c_4
    \lt \infty
    \quad \text{for all $j \ge j_0$},
  \end{align*}
  we conclude
  \begin{align*}
        \sup_{t \in (0, T)} \|w(\cdot, t)\|_{\leb \infty} 
    =   \lim_{j \ra \infty} A_j
    \le \left(\frac{2c_3}{s}\right)^{c_4} A_{j_0}
    \lt \infty,
  \end{align*}
  which in turn directly implies the statement.
\end{proof}

The main result of this section now follows immediately.

\begin{proof}[Proof of Theorem~\ref{th:pw_scalar}]
  Combine Lemma~\ref{lm:w_bdd_l_infty} and \eqref{eq:def_w}.
\end{proof}

\section{Pointwise estimates in quasilinear Keller--Segel systems}
Suppose henceforth that $n \ge 2$, $R \gt 0$ and $\Omega \defs B_R(0)$.

In order to apply Theorem~\ref{th:pw_scalar} to the system \eqref{prob:ks}---and hence prove Theorem~\ref{th:pw_ks}---%
we need some integrability information about $\nabla v$.
This is provided by
\begin{lemma} \label{lm:bdd_v}
  Let $K, L, M \gt 0$, $\tilde \alpha \gt \beta \gt n - 1$ and $\theta \in (1, \infty]$.
  Then there is $C \gt 0$ with the following property:

  Suppose that $T \in (0, \infty]$,
  $g \in C^0(\ombar \times [0, T))$ is radially symmetric and nonnegative with 
  \begin{align*}
    \|g(\cdot, t)\|_{\leb1} \le M \quad \text{for all $t \in (0, T)$},
  \end{align*}
  that $v_0 \in \sob1\infty$ is radially symmetric and nonnegative with
  \begin{align*}
    \|v_0\|_{\sob1\infty} \le L
  \end{align*}
  and that, if $\theta = \infty$, 
  \begin{align*}
    g(x, t) \le K |x|^{-\tilde \alpha} \quad \text{for all $x \in \Omega$ and $t \in (0, T)$}.
  \end{align*}

  Then any classical, radially symmetric solution $v \in C^0(\ombar \times [0, T)) \cap C^{2, 1}(\ombar \times (0, T))$ to
  \begin{align*}
    \begin{cases}
      v_t = \Delta v - v + g(x, t), & \text{in $\Omega \times (0, T)$}, \\
      \partial_\nu v = 0,           & \text{in $\partial \Omega \times (0, T)$}, \\
      v(\cdot, 0) = v_0,            & \text{in $\Omega$}
    \end{cases}
  \end{align*}
  fulfills
  \begin{align*}
    \sup_{t \in (0, T)} \intom |x|^{\theta \beta} |\nabla v(x, t)|^\theta \dx \le C
  \end{align*}
  if $\theta \lt \infty$
  and
  \begin{align*}
    \sup_{t \in (0, T)} |\nabla v(x, t)| \le C |x|^{-\beta} \quad \text{for all $x \in \Omega$}
  \end{align*}
  if $\theta = \infty$.
\end{lemma}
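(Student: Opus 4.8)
The plan is to use the explicit representation of $\nabla v$ via the radial structure. Since $v$ is radially symmetric and solves a linear heat-type equation with source $g$, I would first reduce everything to the radial variable $r = |x|$. Writing $v = v(r, t)$, the key observation is that the first component of the equation, integrated over a ball $B_r(0)$, controls $\partial_r v$ through a Duhamel-type formula. Specifically, integrating $v_t = \Delta v - v + g$ over $B_r(0)$ and using the divergence theorem turns $\int_{B_r} \Delta v$ into a boundary flux $r^{n-1} \partial_r v(r, t)$ (up to the surface-area constant $\omega_{n-1}$), yielding an identity of the form

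\begin{align*}
  r^{n-1} \partial_r v(r, t)
  = \frac{1}{\omega_{n-1}} \intom_{B_r} (v_t + v - g)
  = \frac{1}{\omega_{n-1}} \left( \ddt \int_{B_r} v + \int_{B_r} v - \int_{B_r} g \right).
\end{align*}

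The strategy is then to estimate each of these three contributions separately, bounding $|\partial_r v(r, t)|$ pointwise by something integrable against $r^{(n-1) - 1}\dr$ with the right weight. The mass bound $\|g(\cdot, t)\|_{\leb 1} \le M$ directly controls $\int_{B_r} g \le M$, giving a contribution $\le \frac{M}{\omega_{n-1}} r^{-(n-1)}$; since $\beta > n-1$, this is already the dominant scaling and is what produces the $|x|^{-\beta}$ bound after integrating and using $\beta > n - 1$. The terms involving $v$ itself require an a priori $L^\infty$ bound on $v$, which I would obtain from standard smoothing estimates for the operator $-\Delta + 1$ applied to the $L^1$-bounded source $g$ together with the bounded initial data $v_0 \in \sob1\infty$; semigroup $L^p$–$L^q$ estimates (or the comparison structure of the heat equation) give $\|v(\cdot, t)\|_{\leb\infty} \le c$ uniformly in $t$, whence $\int_{B_r} v \le c |B_r| \lesssim r^n$ and the ODE-type term $\ddt \int_{B_r} v$ is handled by the same representation formula used in \cite{WinklerBlowupProfilesLife}, against which the paper explicitly says it may rely.

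I would then separate the two target estimates by the value of $\theta$. For $\theta < \infty$, the claim $\sup_t \intom |x|^{\theta\beta}|\nabla v|^\theta \le C$ reduces, after passing to polar coordinates, to showing $\int_0^R r^{\theta\beta} |\partial_r v(r, t)|^\theta r^{n-1} \dr \le C$; plugging in the pointwise bound $|\partial_r v(r,t)| \lesssim r^{-(n-1)} + (\text{lower-order in } r)$ from the representation formula, the integrand behaves like $r^{\theta\beta - \theta(n-1) + n - 1}$, which is integrable near $r = 0$ precisely when $\theta(\beta - (n-1)) + n - 1 > -1$, i.e. $\theta(\beta-(n-1)) > -n$, automatic since $\beta > n-1$. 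For $\theta = \infty$, the additional hypothesis $g(x, t) \le K|x|^{-\tilde\alpha}$ with $\tilde\alpha > \beta$ refines the mass estimate: instead of $\int_{B_r} g \le M$ one gets the localized bound $\int_{B_r} g \lesssim \int_0^r \rho^{-\tilde\alpha} \rho^{n-1}\dr$, and I would split the integration at an intermediate radius to get the sharp pointwise decay $|\partial_r v(r,t)| \le C r^{-\beta}$.

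The main obstacle, as I see it, is obtaining the uniform-in-time $L^\infty$ bound on $v$ (and the accompanying control of $\ddt \int_{B_r} v$) from only the $L^1$-in-space bound on $g$; this is where the smoothing of $-\Delta + 1$ is essential, and one must be careful that the semigroup estimates produce constants independent of $T$. Since the paper states that for this step it may "basically rely on the results in \cite{WinklerBlowupProfilesLife}," I would import the corresponding radial estimates from there rather than rederive them, and the remaining work is the elementary but careful bookkeeping of exponents in the two cases $\theta < \infty$ and $\theta = \infty$ to match the stated weights.
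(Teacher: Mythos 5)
There is a genuine gap. First, the factual situation: the paper does not prove this lemma at all---its proof is the single line ``See \cite[Lemma~3.4]{WinklerBlowupProfilesLife}'', the statement being essentially a verbatim import of that lemma. Your fallback position (citing Winkler's radial estimates wholesale) therefore coincides with the paper, and had you left it at that you would be done. But the standalone argument you sketch, taken as a proof, fails at its center, and in a way worth understanding. The ball-integration identity $\omega_{n-1} r^{n-1} \partial_r v(r,t) = \ddt \int_{B_r} v + \int_{B_r} v - \int_{B_r} g$ is the standard \emph{parabolic--elliptic} device: when the second equation is $0 = \Delta v - v + g$, the time-derivative term is absent and the identity immediately yields $|\partial_r v| \lesssim r^{1-n}$ from $L^1$ bounds. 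In the fully parabolic case the term $\ddt \int_{B_r} v$ survives, and no a priori information at hand controls it pointwise in time: boundedness of $\int_{B_r} v$ says nothing about its time derivative. Deferring this term to ``the same representation formula used in \cite{WinklerBlowupProfilesLife}'' is circular, because Winkler's proof never produces or controls this term---it abandons the ball-integration identity entirely and instead works from the Duhamel representation $v(\cdot,t) = \ure^{t(\Delta-1)}v_0 + \int_0^t \ure^{(t-s)(\Delta-1)}g(\cdot,s)\ds$ together with pointwise kernel estimates for the Neumann heat semigroup applied to radially symmetric $L^1$ data. The presence of $v_t$ is precisely what makes the fully parabolic case hard, and your plan reduces to the parabolic--elliptic argument plus a citation that does not cover the missing piece.

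Second, the auxiliary bound you invoke is false: for $n \ge 2$ one cannot deduce $\sup_{t} \|v(\cdot,t)\|_{\leb\infty} \le c$ from $\sup_t \|g(\cdot,t)\|_{\leb1} \le M$ and $v_0 \in \sob1\infty$. The $L^1$--$L^\infty$ smoothing of $\ure^{\sigma(\Delta-1)}$ costs $\sigma^{-n/2}$, which is not integrable near $\sigma = 0$ when $n \ge 2$; and in the intended application $g = u$ blows up, where $v$ is in general unbounded as $t \nea \tmax$. (This particular defect is repairable: integrating the equation gives $\sup_t \|v(\cdot,t)\|_{\leb1} \le \max\{\|v_0\|_{\leb1}, M\}$, which suffices for the $\int_{B_r} v$ term; but the $\ddt \int_{B_r} v$ term remains fatal.) A final warning sign you could have used as a self-check: your sketch would deliver the clean bound $|\partial_r v| \lesssim r^{-(n-1)}$ from $L^1$ data alone, which is strictly stronger than what the lemma asserts---the lemma concedes a loss ($\beta \gt n-1$ rather than $\beta = n-1$) and, for $\theta = \infty$, requires the additional pointwise hypothesis on $g$. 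That such a loss appears in the statement is an indication that no elementary argument of the kind you propose is available in the fully parabolic setting.
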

\begin{proof}
  See \cite[Lemma~3.4]{WinklerBlowupProfilesLife}.
\end{proof}

We are now indeed able to employ Theorem~\ref{th:pw_scalar}
in order to obtain pointwise estimates for solutions to systems slightly more general than \eqref{prob:ks}.
(The generality is needed as the following Lemma will
be used not only to prove Theorem~\ref{th:pw_ks} but also in in the proof of Lemma~\ref{lm:ue_ve_c2_bdd} below.)

\begin{lemma} \label{lm:pw_ks_g} 
  Suppose that the parameters in \eqref{eq:pw_ks:params} comply with \eqref{eq:pw_ks:cond_m_q} and set $K_g \gt 0$.
  Then for any $\alpha \gt \ul \alpha$,
  with $\ul \alpha$ as in \eqref{eq:pw_ks:cond_alpha},
  and any $\beta \gt n-1$,
  there exists $C \gt 0$ with the following property: 

  Given functions in \eqref{eq:pw_ks:reg} and $g \in C^0([0, \infty))$
  complying with \eqref{eq:pw_scalar:u0}, \eqref{eq:pw_ks:D1} -- \eqref{eq:pw_ks:v_0}
  and
  \begin{align} \label{eq:pw_ks_g:cond_g}
    g(\rho) \le K_g \rho \quad \text{for $\rho \ge 0$},
  \end{align}
  any nonnegative and radially symmetric classical solution
  $(u, v) \in C^0(\ombar \times [0, T)) \cap C^{2, 1}(\ombar \times (0, T))$
  of
  \begin{align} \label{prob:ks_gu} 
    \begin{cases}
      u_t = \nabla \cdot (D(u, v) \nabla u - S(u, v) \nabla v), & \text{in $\Omega \times (0, T)$}, \\
      v_t = \Delta v - v + g(u),                                & \text{in $\Omega \times (0, T)$}, \\
      \partial_\nu u = \partial_\nu v = 0,                      & \text{on $\partial \Omega \times (0, T)$}, \\
      u(\cdot, 0) = u_0, v(\cdot, 0) = v_0,                     & \text{in $\Omega$}
    \end{cases}
  \end{align}
  fulfills \eqref{eq:pw_scalar:u_est} and $|\nabla v(x, t)| \le  C|x|^{-\beta}$ for $x \in \Omega$ and $t \in (0, T)$.
\end{lemma}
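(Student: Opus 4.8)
The plan is to recast the first equation of \eqref{prob:ks_gu} into the scalar form \eqref{prob:scalar_eq} with $f \defs -\nabla v$ and $\pu \defs 1$, so that Theorem~\ref{th:pw_scalar} yields the bound on $u$, and to extract the integrability of $f$ it requires from Lemma~\ref{lm:bdd_v}. To begin, I would integrate the first equation of \eqref{prob:ks_gu} over $\Omega$; the no-flux condition $\partial_\nu u = \partial_\nu v = 0$ makes all boundary terms vanish and gives mass conservation $\intom u(\cdot, t) = \intom u_0 \le M$ for $t \in (0, T)$ (as in Remark~\ref{rm:pu_1}). Since the production term satisfies $0 \le g(u) \le K_g u$ by \eqref{eq:pw_ks_g:cond_g}, this entails $\|g(u)(\cdot, t)\|_{\leb 1} \le K_g M$, and together with $\|v_0\|_{\sob1\infty} \le L$ from \eqref{eq:pw_ks:v_0} I may apply Lemma~\ref{lm:bdd_v} to the second equation: for any finite $\theta > n$ and any $\beta_0 > n-1$ it produces $K_f > 0$ with $\sup_{t \in (0,T)} \intom |x|^{\theta \beta_0} |\nabla v|^\theta \le K_f$, which is precisely \eqref{eq:pw_scalar:f} with $\beta = \beta_0$.

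Setting $f \defs -\nabla v$ and $D(x,t,\rho) \defs D(\rho, v(x,t))$, $S(x,t,\rho) \defs S(\rho, v(x,t))$, the first line of \eqref{prob:ks_gu} becomes the divergence-form inequality in \eqref{prob:scalar_eq} (with equality), the no-flux condition becomes the admissible boundary condition, and the structural estimates \eqref{eq:pw_scalar:D1}--\eqref{eq:pw_scalar:S} follow from \eqref{eq:pw_ks:D1}--\eqref{eq:pw_ks:S} upon noting that the bounds over $\sigma \ge 0$ dominate those over the range of $v$. The heart of the matter is then to choose $(\beta_0, \theta)$ so that the hypotheses of Theorem~\ref{th:pw_scalar} hold with $\pu = 1$ and $\beta = \beta_0$. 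For the window in \eqref{eq:pw_scalar:cond_m_q}, the lower bound $m - q > \frac1\theta - \frac1n$ holds for large $\theta$ because $m - q > -\frac1n$ by \eqref{eq:pw_ks:cond_m_q}, while the upper bound $m - q \le \frac1\theta + \frac{\beta_0 - 1}{n}$ holds because $\beta_0 > n-1$ forces $\frac{\beta_0 - 1}{n} > \frac{n-2}{n} \ge m - q$; the remaining condition $m > \frac{n-2}{n}$ is exactly the second part of \eqref{eq:pw_ks:cond_m_q}. For the threshold \eqref{eq:pw_scalar:cond_alpha} I would use the identity $\ul\alpha = \frac{n(n-1)}{(m-q)n+1} = \frac{n-1}{m-q+\frac1n}$ together with the limit $\frac{\beta_0}{m-q+\frac1n-\frac1\theta} \to \ul\alpha$ as $(\beta_0, \theta) \to (n-1, \infty)$; since $\alpha > \ul\alpha$, this lets me fix $\beta_0 > n-1$ near $n-1$ and $\theta > n$ large with $\frac{\beta_0}{m-q+\frac1n-\frac1\theta} < \alpha$. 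With these choices Theorem~\ref{th:pw_scalar} delivers \eqref{eq:pw_scalar:u_est}.

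It remains to upgrade this to the pointwise gradient estimate, for which I would invoke Lemma~\ref{lm:bdd_v} a second time, now with $\theta = \infty$. Putting $\tilde\alpha \defs \max\{\alpha, \beta + 1\}$, the already established bound $u \le C|x|^{-\alpha}$ gives $u(x,t) \le C R^{\tilde\alpha - \alpha} |x|^{-\tilde\alpha}$ on $\Omega = B_R(0)$ (because $|x| \le R$ and $\tilde\alpha \ge \alpha$), whence $g(u) \le K_g C R^{\tilde\alpha - \alpha} |x|^{-\tilde\alpha}$. As $\tilde\alpha > \beta > n-1$, the case $\theta = \infty$ of Lemma~\ref{lm:bdd_v} then yields $C > 0$ with $|\nabla v(x,t)| \le C|x|^{-\beta}$, completing the proof.

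The main obstacle I anticipate is the parameter bookkeeping of the second step -- exhibiting a single pair $(\beta_0, \theta)$ that meets both the membership condition \eqref{eq:pw_scalar:cond_m_q} and the threshold \eqref{eq:pw_scalar:cond_alpha} -- since this is exactly where the explicit value of $\ul\alpha$ and the above limit are used. A comparatively minor point is verifying the regularity $f = -\nabla v \in C^1(\ombar \times (0,T); \R^n)$ demanded by \eqref{eq:pw_scalar:reg_dsf}, which follows from interior parabolic regularity for the second equation of \eqref{prob:ks_gu}.
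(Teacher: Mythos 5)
Your proposal is correct and follows essentially the same route as the paper: reduction to Theorem~\ref{th:pw_scalar} with $\pu = 1$ via $f \defs -\nabla v$ and the composed coefficients, mass conservation as in Remark~\ref{rm:pu_1}, Lemma~\ref{lm:bdd_v} with finite $\theta$ to furnish \eqref{eq:pw_scalar:f}, the same limiting argument to pick $\tilde\beta$ near $n-1$ and $\theta$ large so that \eqref{eq:pw_scalar:cond_m_q} and \eqref{eq:pw_scalar:cond_alpha} hold, and Lemma~\ref{lm:bdd_v} with $\theta = \infty$ for the gradient bound. The only (immaterial) deviation is in the last step, where you apply the $\theta=\infty$ case directly with the target exponent $\beta$ and an inflated $\tilde\alpha$, whereas the paper applies it with the smaller exponent $\tilde\beta$ and then converts $|x|^{-\tilde\beta} \le \max\{R,1\}^{\beta-\tilde\beta}|x|^{-\beta}$.
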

\begin{proof}
  We fix such a solution $(u, v)$ and functions in \eqref{eq:pw_ks:reg} as well as $g \in C^0([0, \infty))$,
  but emphasize that all constants below only depend on the parameters in \eqref{eq:pw_ks:params}
  as well as on $K_g, \alpha$ and $\beta$.

  Set $\pu \defs 1$.
  Noting that
  \begin{align*}
      \lim_{\tilde \beta \sea n-1} \lim_{\theta \nea \infty} \frac{\tilde \beta}{m-q + \frac{\pu}{n} - \frac{\pu}{\theta}}
    = \frac{n (n-1)}{(m-q)n + 1}
    = \ul \alpha,
  \end{align*}
  we can choose $\tilde \beta \in (n-1, \beta)$ small enough
  and $\theta \gt n$ large enough
  such that still
  \begin{align*}
    \alpha \gt \frac{\tilde \beta}{m-q + \frac{\pu}{n} - \frac{\pu}{\theta}}.
  \end{align*}

  Setting
  \begin{align*}
    \tilde D(x, t, \rho) \defs D(\rho, v(x, t)), \quad
    \tilde S(x, t, \rho) \defs D(\rho, v(x, t))
    \quad \text{and} \quad
    f(x, t) \defs -\nabla v(x, t)
  \end{align*}
  for $\rho \ge 0, x \in \ombar$ and $t \in (0, T)$
  we see that \eqref{eq:pw_scalar:reg_dsf} -- \eqref{eq:pw_scalar:S} are satisfied (for $\tilde D, \tilde S$ instead of $D, S$),
  while \eqref{eq:pw_scalar:p} follows by \eqref{eq:pw_ks:v_0} and Remark~\ref{rm:pu_1}.
  Furthermore, the boundary conditions in \eqref{prob:ks_gu} imply
  \begin{align*}
    \left( \tilde D(x, t, u) \nabla u + \tilde S(x, t, u) f \right) \cdot \nu = 0 \le 0
    \quad \text{on $\partial \Omega \times (0, T)$.}
  \end{align*}
  As also 
  \begin{align*}
        K_f
    \defs \sup_{t \in (0, T)} \intom |x|^{\theta \tilde \beta} |f(x, t)|^\theta \dx
    =  \sup_{t \in (0, T)} \intom |x|^{\theta \tilde \beta} |\nabla v(x, t)|^\theta \dx
    \lt \infty
  \end{align*}
  by Lemma~\ref{lm:bdd_v},
  we may indeed invoke Theorem~\ref{th:pw_scalar} to obtain
  $C \gt 0$ such that \eqref{eq:pw_scalar:u_est} holds.
  Once more applying Lemma~\ref{lm:bdd_v}, now with $\theta = \infty$, yields
  \begin{align*}
        |\nabla v(x, t)|
    \le C' |x|^{-\tilde \beta}
    \le C' \max\{R, 1\}^{\beta - \tilde \beta} |x|^{-\beta}
    \quad \text{for $x \in \Omega$ and $t \in (0, T)$}
  \end{align*}
  for some $C' \gt 0$.
\end{proof}

An immediate consequence thereof is Theorem~\ref{th:pw_ks}.
\begin{proof}[Proof of Theorem~\ref{th:pw_ks}]
  Choosing $g = \mathrm{id}$ (and, say, $K_g = 1$) in Lemma~\ref{lm:pw_ks_g},
  we see that \eqref{prob:ks_gu} reduces then to \eqref{prob:ks}.
\end{proof}

\section{Existence of blow-up profiles} \label{sec:ex_profiles}
Throughout this section we suppose $n \ge 2$, $R \gt 0$, $\Omega \defs B_R(0)$,
and that \eqref{eq:pw_scalar:u0} and \eqref{eq:pw_ks:D1} -- \eqref{eq:pw_ks:S}
are fulfilled for certain parameters and functions in \eqref{eq:pw_ks:params} and \eqref{eq:pw_ks:reg}, respectively.
In addition---and in contrast to the preceding sections---%
we will also assume \eqref{eq:profile_ks:non_degen}, that is, that $D \ge \eta$, for some $\eta \gt 0$.

Furthermore, fix $\tmax \lt \infty$ and a solution $(u, v)$ to \eqref{prob:ks} (with $\tmax$ instead of $T$)
with the property $\limsup_{t \nea \tmax} \|u(\cdot, t)\|_{\leb \infty} = \infty$.

We will examine whether and in which form $\lim_{t \nea \tmax} u(\cdot, t)$ and $\lim_{t \nea \tmax} v(\cdot, t)$ exist.
To that end we may moreover assume
\begin{align*}
  u_0, v_0 \in \con2
  \quad \text{as well as} \quad
  u, v \in C^{2, 1}(\ombar \times [0, \tmax))
\end{align*}
since the behavior of $(u, v)$ at $\tmax$
may be directly inferred from that of $(\tilde u, \tilde v)$ at $\frac{\tmax}{2}$, where
\begin{align*}
      (\tilde u, \tilde v)
  \defs \left(u(\cdot, \cdot + \tfrac{\tmax}{2}), v(\cdot, \cdot + \tfrac{\tmax}{2})\right)
  \in \left( C^{2, 1}(\ombar \times [0, \tfrac{\tmax}{2})) \right)^2.
\end{align*}

Furthermore, for $\eps \in (0, 1)$,
we fix henceforth $G_\eps \in C^\infty([0, \infty))$ satisfying
$G_\eps(\xi) = \xi$ for all $\xi \in [0, \frac 1\eps]$
and $0 \le G_\eps(\xi) \le \frac2\eps$ for all $\xi \ge 0$.

The main idea is to construct solutions $(\ue, \ve)$, $\eps \in (0, 1)$ to certain approximative problems
which converge along a subsequence to, say, $(\wh u, \wh v)$.
We will then see that these functions coincide with $u$ and $v$ in $\Omega \setminus \{0\} \times (0, \tmax)$
such that, for instance, $\lim_{t \nea \tmax} u(\cdot, t) = \wh u(\cdot, \tmax)$.

\begin{lemma} \label{lm:ue_ve_local_ex}
  For any $\eps \in (0, 1)$ there exists $\tmaxe$
  and a pair of nonnegative functions $(\ue, \ve)$ solving
  \begin{align} \label{eq:lm:ue_ve_local_ex:prob}
    \begin{cases}
      \uet = \nabla \cdot (D(\ue, \ve) \nabla \ue - S(\Ge(\ue), \ve) \nabla \ve), & \text{in $\Omega \times (0, \tmaxe)$}, \\
      \vet = \Delta \ve - \ve + \Ge(\ue),                                         & \text{in $\Omega \times (0, \tmaxe)$}, \\
      \partial_\nu \ue = \partial_\nu \ve = 0,                                    & \text{on $\partial \Omega \times (0, \tmaxe)$}, \\
      \ue(\cdot, 0) = u_0, \ve(\cdot, 0) = v_0,                                   & \text{in $\Omega$}
    \end{cases}
  \end{align}
  classically and having the property
  that if $\tmaxe \lt \infty$ then
  \begin{align*}
    \limsup_{t \nea \tmaxe} \|\ue(\cdot, t)\|_{\leb \infty} = \infty.
  \end{align*}
\end{lemma}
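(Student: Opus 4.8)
The plan is to prove local existence and the blow-up dichotomy for the regularized system \eqref{eq:lm:ue_ve_local_ex:prob} by appealing to the now-standard existence theory for quasilinear parabolic systems of chemotaxis type (as developed, for instance, in the works of Amann on quasilinear parabolic systems, and as used in \cite{TaoWinklerBoundednessQuasilinearParabolic2012} and related literature). First I would observe that the regularization serves to tame the two nonlinearities that could otherwise destroy well-posedness: the cut-off $G_\eps$ replaces the potentially unbounded reaction term $u$ in the second equation by the bounded quantity $G_\eps(\ue) \in [0, \tfrac2\eps]$, and likewise replaces the argument of $S$ in the cross-diffusion term by a bounded one. The diffusion coefficient $D(\ue, \ve)$ is, by \eqref{eq:pw_ks:D1} and the standing assumption \eqref{eq:profile_ks:non_degen}, bounded below by $\eta > 0$, so the first equation is uniformly parabolic in $\ue$ for as long as $\ue$ and $\ve$ stay bounded, and the system fits the abstract framework guaranteeing local-in-time existence of a classical solution together with an extensibility criterion.

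Concretely, I would carry out the following steps. First, fix $\eps \in (0,1)$ and verify that with the data regularity $u_0, v_0 \in \con2$ (which we have arranged to assume in this section) the hypotheses of the quasilinear local existence theorem are met: the coefficients $(\rho, \sigma) \mapsto D(\rho, \sigma)$ and $(\rho, \sigma) \mapsto S(G_\eps(\rho), \sigma)$ lie in $C^1([0,\infty)^2)$ by \eqref{eq:pw_ks:reg} and the smoothness of $G_\eps$, the principal part is diagonal and (on bounded sets) uniformly elliptic thanks to $D \ge \eta$, and the homogeneous Neumann boundary conditions are compatible. This yields $\tmaxe \in (0, \infty]$ and a classical solution $(\ue, \ve) \in \left(C^0(\ombar \times [0, \tmaxe)) \cap C^{2,1}(\ombar \times (0, \tmaxe))\right)^2$ together with the extensibility criterion that either $\tmaxe = \infty$ or
\begin{align*}
  \limsup_{t \nea \tmaxe} \left( \|\ue(\cdot, t)\|_{\leb\infty} + \|\ve(\cdot, t)\|_{\sob1\infty} \right) = \infty.
\end{align*}

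Next I would establish nonnegativity and, crucially, upgrade the extensibility criterion to the cleaner form claimed in the statement, namely that blow-up of the $\leb\infty$-norm of $\ue$ alone already forces $\tmaxe$ to be finite (equivalently, that $\ve$ cannot blow up independently). Nonnegativity of $\ue$ follows from the parabolic comparison principle applied to the first equation, noting that $\ue \equiv 0$ is a subsolution and that the cross-diffusion term vanishes where $\ue = 0$; nonnegativity of $\ve$ follows similarly from the second equation since $G_\eps(\ue) \ge 0$. To remove $\ve$ from the extensibility criterion, I would argue that $\ve$ is controlled by $\ue$: since $G_\eps(\ue) \le \tfrac2\eps$ pointwise, standard smoothing estimates for the linear heat-type equation $\vet = \Delta \ve - \ve + G_\eps(\ue)$ with Neumann data (via the Neumann heat semigroup and its $L^p$–$L^q$ and gradient estimates) bound $\|\ve(\cdot, t)\|_{\sob1\infty}$ in terms of $\|v_0\|_{\sob1\infty}$ and $\sup_{s<t}\|G_\eps(\ue(\cdot,s))\|_{\leb\infty} \le \tfrac2\eps$, uniformly on $[0, \tmaxe)$ whenever $\tmaxe < \infty$. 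Hence if $\ue$ remains bounded up to $\tmaxe < \infty$, so does $\ve$ in $\sob1\infty$, contradicting the criterion; therefore $\tmaxe < \infty$ indeed forces $\limsup_{t \nea \tmaxe} \|\ue(\cdot, t)\|_{\leb\infty} = \infty$.

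The main obstacle I anticipate is not the existence theory itself—that is essentially a citation—but rather the careful verification that the regularized coefficients genuinely fall within the scope of the quoted quasilinear theorem (in particular that the cross-diffusion structure does not spoil the parabolicity or the boundary compatibility) and the clean decoupling needed to eliminate $\ve$ from the extensibility criterion. The decoupling step is delicate because it relies on the \emph{boundedness} of $G_\eps$, which is exactly what the regularization buys us; without the cut-off, $\ve$ could in principle inherit blow-up from $\ue$ and the two norms could not be separated. I would therefore emphasize that the argument is genuinely local in $\eps$ and that the constants controlling $\ve$ degenerate as $\eps \sea 0$ (they depend on $\tfrac1\eps$), which is acceptable here since $\eps$ is fixed throughout this lemma and the uniform-in-$\eps$ estimates are postponed to the subsequent lemmas.
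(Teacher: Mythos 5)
Your proposal is correct and is, at heart, the same proof as the paper's: both treat local existence for \eqref{eq:lm:ue_ve_local_ex:prob} as an application of standard theory for nondegenerate parabolic systems (the regularization making $D \ge \eta$ effective and $\Ge$ bounded) and obtain nonnegativity from the maximum principle. The difference lies in the reference used and in one explicit step: the paper simply cites the Schauder fixed-point construction of \cite[Lemmas~2.1--2.4]{LankeitLocallyBoundedGlobal2017}, whereas you invoke Amann/Tao--Winkler-type theory \cite{TaoWinklerBoundednessQuasilinearParabolic2012}, whose extensibility criterion involves $\|\ue(\cdot,t)\|_{\leb\infty} + \|\ve(\cdot,t)\|_{\sob1\infty}$, and you then eliminate the $\ve$-term by Neumann heat semigroup estimates applied to $\vet = \Delta \ve - \ve + \Ge(\ue)$ with $0 \le \Ge(\ue) \le \frac2\eps$. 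That reduction is sound---indeed it is essentially the first step of the paper's own proof of Lemma~\ref{lm:ue_ve_global_ex}, where $L^p$-$L^q$ estimates yield $\sup_t \|\ve(\cdot,t)\|_{\sob1\infty} \lt \infty$ precisely from the boundedness of $\Ge$---so your route buys a self-contained derivation of the stated blow-up criterion at the cost of one extra, easy argument, while the paper's route buys brevity by deferring to a construction tailored to this type of system. One caveat, which applies equally to both proofs: deducing $\ue \ge 0$ by comparison with the zero subsolution implicitly uses $S(0, \cdot) \equiv 0$ (true for the prototype $S(\rho, \sigma) = \rho(\rho+1)^{q-1}$, but not forced by the upper bound \eqref{eq:pw_ks:S}); your phrasing at least makes this hidden assumption visible, which the paper's one-line appeal to the maximum principle does not.
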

\begin{proof}
  Local existence and extensibility can be proved as in \cite[Lemma~2.1 -- 2.4]{LankeitLocallyBoundedGlobal2017}
  which essentially relies on regularity theory for nondegenerate parabolic equations
  and Schauder's fixed point theorem---while nonnegativity follows by the maximum principle.
\end{proof}

For all $\eps \in (0, 1)$ fix henceforth $\ue, \ve$ and $\tmaxe$ as given by Lemma~\ref{lm:ue_ve_local_ex}.
By quite standard methods we see that the regularized solutions are global in time.
\begin{lemma} \label{lm:ue_ve_global_ex}
  Let $\eps \in (0, 1)$.
  Then the solution $(\ue, \ve)$ constructed in Lemma~\ref{lm:ue_ve_local_ex} is global in time;
  that is, $\tmaxe = \infty$.
\end{lemma}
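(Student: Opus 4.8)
The plan is to fix $\eps \in (0,1)$ and to show that $\ue$ cannot become unbounded in $\leb\infty$ at any finite time, which—by the extensibility criterion in Lemma~\ref{lm:ue_ve_local_ex}—forces $\tmaxe = \infty$. The decisive structural feature is that the cutoff $\Ge$ caps $\Ge(\ue) \le \frac2\eps$, so all coupling terms become controllable with $\eps$-dependent (but finite) constants; since we only need globality for each fixed $\eps$ and not any uniformity in $\eps$, such constants are perfectly admissible here.

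First I would collect time-uniform bounds on $\ve$. Integrating the first equation in \eqref{eq:lm:ue_ve_local_ex:prob} over $\Omega$ and using the no-flux boundary condition gives $\intom \ue(\cdot, t) = \intom u_0 \le M$ for all $t \in (0, \tmaxe)$, that is, mass conservation. Because $\Ge(\ue) \le \frac2\eps$, comparison with the constant supersolution $\max\{\|v_0\|_{\leb\infty}, \frac2\eps\}$ in the second equation yields $0 \le \ve \le \max\{\|v_0\|_{\leb\infty}, \frac2\eps\}$ uniformly in time, and the standard smoothing estimates for the Neumann heat semigroup (applied to the variation-of-constants representation of $\ve$, whose source $\Ge(\ue)$ is bounded in $\leb\infty$) furnish a constant with $\sup_{t \in (0, \tmaxe)} \|\nabla \ve(\cdot, t)\|_{\leb\infty} \lt \infty$. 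Consequently, by \eqref{eq:pw_ks:S} and $\Ge(\ue) \le \frac2\eps$ the chemotactic coefficient obeys $|S(\Ge(\ue), \ve)| \le K_S \max\{\frac2\eps, 1\}^q$, so that the drift $S(\Ge(\ue), \ve)\nabla \ve$ is bounded in $\leb\infty$ on $\Omega \times (0, \tmaxe)$.

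With these preparations the first equation reduces to a uniformly parabolic equation in divergence form: the nondegeneracy assumption \eqref{eq:profile_ks:non_degen}, $D \ge \eta$, controls the diffusion from below, while the drift is genuinely bounded. I would therefore test with $\ue^{p-1}$, keep the dissipative term $\intom \ue^{p-2}|\nabla\ue|^2$ via $D \ge \eta$, and absorb the drift contribution into it by Young's inequality; combining the resulting functional inequality with the Gagliardo--Nirenberg inequality and the mass bound then produces $\eps$-dependent, $p$-parametrised $\leb p$ bounds that are uniform on $(0, \tmaxe)$. A Moser-type iteration along $p_j \nea \infty$—exactly as in Lemma~\ref{lm:w_bdd_l_infty}, or by invoking \cite[Lemma~A.1]{TaoWinklerBoundednessQuasilinearParabolic2012}—upgrades these to $\sup_{t \in (0, \tmaxe)} \|\ue(\cdot, t)\|_{\leb\infty} \lt \infty$, contradicting the blow-up alternative unless $\tmaxe = \infty$. (Alternatively, one may cite the local-boundedness/globality machinery of \cite{LankeitLocallyBoundedGlobal2017} directly.)

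The main obstacle is, as ever, the $\leb p$-to-$\leb\infty$ passage, but in contrast to Section~\ref{sec:pw_scalar} the task is substantially easier: the diffusion is uniformly elliptic and the drift is bounded, so neither the weight $|x|^\alpha$ nor the delicate parameter bookkeeping of Lemma~\ref{lm:params} is needed, and all constants may depend freely on $\eps$. The only point requiring a little care is to ensure that the $\leb p$ estimates hold uniformly on the entire interval $(0, \tmaxe)$—so that the limiting $\leb\infty$ bound does not degenerate as $t \nea \tmaxe$—which is guaranteed precisely by the time-uniform control of $\ve$ and $\nabla\ve$ obtained in the second step.
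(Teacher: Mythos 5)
Your proposal is correct and follows essentially the same route as the paper's proof: time-uniform $W^{1,\infty}$ bounds on $\ve$ from Neumann heat semigroup ($L^p$-$L^q$) estimates using the cap $\Ge(\ue)\le\frac2\eps$, then testing the first equation with $\ue^{p-1}$, exploiting $D\ge\eta$ and Young's inequality to absorb the bounded drift, and finally a Moser iteration via \cite[Lemma~A.1]{TaoWinklerBoundednessQuasilinearParabolic2012} combined with the extensibility criterion. The only (harmless) deviation is that you invoke mass conservation and Gagliardo--Nirenberg to make the $L^p$ bounds uniform in time, whereas the paper settles for an ODE comparison giving bounds on finite intervals, which suffices since $\tmaxe$ is assumed finite.
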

\begin{proof}
  Since $\Ge$ is bounded,
  $L^p$-$L^q$ estimates (cf.\ \cite[Lemma~1.3~(ii)]{WinklerAggregationVsGlobal2010})
  rapidly yield
  \begin{align*}
    c_1 \defs \sup_{t \in (0, \tmaxe)} \|\ve(\cdot, t)\|_{\sob1\infty} \lt \infty.
  \end{align*}
  Testing the first equation in \eqref{eq:lm:ue_ve_local_ex:prob} with $\ue^{p-1}$, $p \gt 2$, gives
  \begin{align} \label{eq:ue_ve_global_ex:ddt_uep}
          \frac1p \ddt \intom \ue^p
    &=    - (p-1) \intom \ue^{p-2} D(\ue, \ve) |\nabla \ue|^2
          + (p-1) \intom \ue^{p-2} S(G(\ue), \ve) \nabla \ue \cdot \nabla \ve \notag \\
    &\le  - \eta (p-1) \intom \ue^{p-2} |\nabla \ue|^2
          + c_1 c_2 (p-1) \intom \ue^{p-2} |\nabla \ue|
  \end{align}
  in $(0, \tmaxe)$,
  where $c_2 \defs \|S\|_{L^\infty((0, \frac2\eps) \times (0, \infty))}$.

  Therein is by Young's inequality
  \begin{align*}
          \intom \ue^{p-2} |\nabla \ue|
    &\le  \frac{\eta}{4c_1 c_2} \intom \ue^{p-2} |\nabla \ue|^2
          + \frac{c_1c_2}{\eta} \intom \ue^{p-2} \\
    &\le  \frac{\eta}{4c_1 c_2} \intom \ue^{p-2} |\nabla \ue|^2
          + \frac{c_1c_2}{2\eta} \intom \ue^p
          + \frac{c_1c_2|\Omega|}{2\eta}
  \end{align*}
  in $(0, \tmaxe)$,
  so that integrating \eqref{eq:ue_ve_global_ex:ddt_uep} along with an ODE comparison argument yields
  \begin{align} \label{eq:ue_ve_global_ex:u_lp_bdd}
    \sup_{t \in (0, T)} \|u(\cdot, t)\|_{\leb p} \lt \infty \quad \text{for all finite $T \in (0, \tmaxe]$}.
  \end{align}
  By \cite[Lemma~A.1]{TaoWinklerBoundednessQuasilinearParabolic2012},
  this implies \eqref{eq:ue_ve_global_ex:u_lp_bdd} also for $p = \infty$
  so that the extensibility criterion in Lemma~\ref{lm:ue_ve_local_ex} indeed asserts $\tmaxe = \infty$.
\end{proof}

Parabolic regularity allows us to obtain the following
\begin{lemma} \label{lm:ue_ve_c2_bdd}
  For each $\delta \in (0, R)$ and $0 \lt \tau \lt T \lt \infty$ there exist $C \gt 0$ and $\gamma \in (0, 1)$
  such that for all $\eps \in (0, 1)$
  \begin{align} \label{eq:ue_ve_c2_bdd:statement}
    \|\ue\|_{C^{2+\gamma, 1+\frac{\gamma}{2}}(K)} \le C
    \quad \text{and} \quad
    \|\ve\|_{C^{2+\gamma, 1+\frac{\gamma}{2}}(K)} \le C.
  \end{align}
  where $K \defs \ombar \setminus B_\delta(0) \times [\tau, T]$.
\end{lemma}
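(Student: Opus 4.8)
The goal is interior Hölder regularity (in parabolic $C^{2+\gamma,1+\gamma/2}$) for the regularized solutions $(\ue,\ve)$, uniformly in $\eps$, on compact subsets $K$ bounded away from the origin and from the initial time. The crucial leverage is the nondegeneracy $D \ge \eta$ together with the pointwise bounds already secured.

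**Plan of attack:**

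My plan is to first establish a uniform-in-$\eps$ local $L^\infty$ bound on $\ue$ away from the origin, and then bootstrap via standard parabolic Schauder and Hölder theory. The key observation is that the approximative problem \eqref{eq:lm:ue_ve_local_ex:prob} is structurally close to \eqref{prob:ks_gu} with $g = \Ge$: indeed $\Ge(\rho) \le \rho$ for $\rho \in [0, \frac1\eps]$ and in general $\Ge(\rho) \le \rho$ may fail only where $\Ge$ saturates, but since $\Ge(\xi)=\xi$ on $[0,\frac1\eps]$ and is bounded by $\frac2\eps$, one checks $\Ge(\rho) \le K_g \rho$ holds with $K_g = 2$ uniformly in $\eps$ (as $\Ge(\rho) \le \frac2\eps \le 2\rho$ once $\rho \ge \frac1\eps$). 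Therefore I would apply Lemma~\ref{lm:pw_ks_g} with $g = \Ge$ and $K_g = 2$ to obtain, \emph{uniformly in $\eps$}, the pointwise estimate $\ue(x,t) \le C|x|^{-\alpha}$ and the gradient bound $|\nabla\ve(x,t)| \le C|x|^{-\beta}$ for all $x \in \Omega$, $t \in (0,T)$. Restricting to $K = (\ombar\setminus B_\delta(0)) \times [\tau,T]$, this furnishes a uniform $L^\infty$ bound $\|\ue\|_{L^\infty(K')} \le C(\delta)$ on a slightly larger cylinder $K'$ containing $K$ in its interior.

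**Bootstrapping to Hölder bounds:**

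With $\ue$ uniformly bounded on $K'$ (and $\ve$ already bounded in $W^{1,\infty}$ from the proof of Lemma~\ref{lm:ue_ve_global_ex}), the first equation in \eqref{eq:lm:ue_ve_local_ex:prob} becomes, on $K'$, a uniformly parabolic equation in divergence form with bounded, uniformly elliptic coefficient $D(\ue,\ve) \in [\eta, C]$ and a bounded, Lipschitz-in-its-arguments drift term $S(\Ge(\ue),\ve)\nabla\ve$. The nondegeneracy assumption \eqref{eq:profile_ks:non_degen} is precisely what makes the equation uniformly parabolic rather than degenerate. I would then invoke the De Giorgi--Nash--Moser theory to deduce a uniform interior Hölder estimate $\|\ue\|_{C^{\gamma_0,\gamma_0/2}} \le C$ for some $\gamma_0 \in (0,1)$. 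Once $\ue$ is uniformly Hölder continuous, $\ve$ inherits improved regularity through parabolic Schauder estimates applied to its (linear) equation $\vet = \Delta\ve - \ve + \Ge(\ue)$, so that $\ve \in C^{2+\gamma,1+\gamma/2}$ with $\nabla\ve$ Hölder. Feeding this back into the $\ue$-equation, now viewed in nondivergence form with Hölder coefficients, a final application of interior Schauder estimates yields the claimed $C^{2+\gamma,1+\gamma/2}$ bounds for both $\ue$ and $\ve$, uniformly in $\eps$ since every constant in the chain depends only on the uniform $L^\infty$ bounds, on $\eta$, and on the fixed data.

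**The main obstacle:**

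The delicate point is ensuring that all Schauder and De Giorgi constants are genuinely \emph{independent of $\eps$}. This hinges on two facts: that the pointwise bound from Lemma~\ref{lm:pw_ks_g} is $\eps$-uniform (which it is, because $K_g=2$ is fixed and the constant $C$ in that lemma depends only on the listed parameters, not on $g$ beyond $K_g$), and that the ellipticity constants $\eta$ and $C$ for $D(\ue,\ve)$ on $K'$ do not degenerate as $\eps \to 0$. The saturation device $\Ge$ could in principle spoil uniformity if $\ue$ were allowed to grow with $\eps$ on $K'$; the uniform pointwise estimate $\ue \le C\delta^{-\alpha}$ on $K'$ is exactly what rules this out. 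Hence the real work is the careful verification that $\Ge(\rho) \le 2\rho$ and that Lemma~\ref{lm:pw_ks_g} applies to the regularized system, after which the regularity chain is standard; I would cite the appropriate parabolic regularity references (e.g. the interior estimates of Ladyzhenskaya--Solonnikov--Uraltseva) rather than reproduce them.
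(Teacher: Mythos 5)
The first half of your argument coincides with the paper's: there, too, the uniform-in-$\eps$ pointwise bounds are obtained by applying Lemma~\ref{lm:pw_ks_g} to the regularized system \eqref{eq:lm:ue_ve_local_ex:prob}, and your verification that $\Ge(\rho) \le 2\rho$ for all $\rho \ge 0$ (so that \eqref{eq:pw_ks_g:cond_g} holds with $K_g = 2$, independently of $\eps$) is correct and is exactly the point that legitimizes this application; the paper leaves it implicit. One slip in this part: the bound $\sup_t \|\ve(\cdot,t)\|_{\sob1\infty} \lt \infty$ from the proof of Lemma~\ref{lm:ue_ve_global_ex} is \emph{not} uniform in $\eps$ (it is derived from $\|\Ge(\ue)\|_{\leb\infty} \le \frac2\eps$), so the drift bound you need must instead be taken from the $\eps$-uniform estimate $|\nabla \ve| \le C|x|^{-\beta}$ of Lemma~\ref{lm:pw_ks_g} restricted to your cylinder $K'$ --- which you do have.

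The genuine gap is in your final bootstrap step. Having only a uniform $C^{\gamma_0,\frac{\gamma_0}{2}}$ bound for $\ue$ and a $C^{2+\gamma,1+\frac\gamma2}$ bound for $\ve$, you claim the first equation can ``now [be] viewed in nondivergence form with H\"older coefficients'' and conclude by linear Schauder estimates. But in nondivergence form that equation reads
\begin{align*}
  \uet
  &= D(\ue,\ve)\,\Delta \ue
     + D_u(\ue,\ve)\,|\nabla \ue|^2
     + D_v(\ue,\ve)\,\nabla \ve \cdot \nabla \ue \\
  &\pe - S_u(\Ge(\ue),\ve)\,\Ge'(\ue)\,\nabla \ue \cdot \nabla \ve
     - S_v(\Ge(\ue),\ve)\,|\nabla \ve|^2
     - S(\Ge(\ue),\ve)\,\Delta \ve,
\end{align*}
so its lower-order ``coefficients'' contain $\nabla \ue$ itself, indeed quadratically. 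At this stage of your argument $\nabla \ue$ is not known to be bounded, let alone H\"older continuous, uniformly in $\eps$, so the equation is not a linear parabolic equation with H\"older data and Schauder theory does not apply. This is precisely why the paper inserts an intermediate gradient-regularity step: after the Schauder estimate for $\ze$ it applies \cite[Theorem~1.1]{LiebermanHolderContinuityGradient1987} to the divergence-form quasilinear equation --- a result that needs no a priori gradient control, only uniform parabolicity ($D \ge \eta$), boundedness of the solution and regularity of the flux data --- to get uniform $C^{1+\gamma_3,\frac{1+\gamma_3}{2}}$ bounds, and only then invokes \cite[Theorem~IV.5.3]{LadyzenskajaEtAlLinearQuasilinearEquations1998} for the final estimate. (Your De Giorgi--Nash--Moser step plays the role of the paper's appeal to \cite{PorzioVespriHolderEstimatesLocal1993}.) Your chain becomes correct once this $C^{1+\gamma}$ step is inserted; without it, the last step fails as written.

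A second, more minor point: $K = \left(\ombar \setminus B_\delta(0)\right) \times [\tau, T]$ contains points of $\partial\Omega$, so the purely \emph{interior} estimates you invoke throughout do not cover all of $K$; one needs versions valid up to the lateral boundary under the Neumann condition. The paper arranges this by estimating the localized functions $\we = \zeta \ue$ and $\ze = \zeta \ve$, where the cutoff $\zeta$ vanishes near the origin and near $t=0$ and satisfies $\partial_\nu \zeta = 0$ on $\partial\Omega$, so that $\we$, $\ze$ obey homogeneous Neumann conditions and the cited theorems apply up to $\partial\Omega$.
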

\begin{proof}
  This can be shown as in \cite[Lemma~4.3]{WinklerBlowupProfilesLife}. We briefly recall the main idea.

  Start by fixing a cutoff function $\zeta \in C^\infty(\ombar \times [0, \infty))$ such that
  \begin{align*}
    \zeta = 1 &\quad \text{in $K$}, \\
    \zeta = 0
     &\quad \text{in $\left(\ol B_{\frac{\delta}{2}}(0) \times [0, \infty)\right)
                      \cup \left(\ombar \times [0, \tfrac{\tau}{2}]\right)$
                  and} \\
    \partial_\nu \zeta = 0 &\quad \text{on $\partial \Omega \times [0, \infty)$}
  \end{align*}
  and set, for $\eps \in (0, 1)$,
  \begin{align*}
    \we \defs \zeta \ue
    \quad \text{as well as} \quad
    \ze \defs \zeta \ve.
  \end{align*}
  
  By Lemma~\ref{lm:pw_ks_g} there exist $c_1, \alpha, \beta \gt 0$ such that
  \begin{align*}
    |\ue(x, t)| \le c_1 |x|^{-\alpha}
    \quad \text{and} \quad
    |\nabla \ve(x, t)| \le c_1 |x|^{-\beta}
  \end{align*}
  for all $x \in \Omega$, $t \in (0, T+1)$ and $\eps \in (0, 1)$.
  
  In particular,
  \begin{align*}
        \sup_{\eps \in (0, 1)} \left( \|\we\|_{L^\infty(\ombar \times [0, T])} + \|\ze\|_{L^\infty(\ombar \times [0, T])} \right)
    \lt \infty.
  \end{align*}

  Basically, the statement follows then by parabolic regularity theory, applied to $\we$ and $\ze$ for $\eps \in (0, 1)$.
  We sketch the main steps.

  At first, \cite[Theorem~1.3]{PorzioVespriHolderEstimatesLocal1993} gives $\tau_1 \in (0, \tau)$ and $\gamma_1 \in (0, 1)$ such that
  \begin{align*}
    \sup_{\eps \in (0, 1)} \|\we\|_{C^{\gamma_1, \frac{\gamma_1}{2}}(\ombar \times [\tau_1, T])} \lt \infty.
  \end{align*}

  In a second step one uses this information along with \cite[Theorem~IV.5.3]{LadyzenskajaEtAlLinearQuasilinearEquations1998}
  to obtain
  \begin{align*}
    \sup_{\eps \in (0, 1)} \|\ze\|_{C^{2+\gamma_2, 1+\frac{\gamma_2}{2}}(\ombar \times [\tau_2, T])} \lt \infty
  \end{align*}
  for some $\tau_2 \in (\tau_1, \tau)$ and $\gamma_2 \in (0, \gamma_1)$.

  Finally, by employing first \cite[Theorem~1.1]{LiebermanHolderContinuityGradient1987}
  and then again \cite[Theorem~IV.5.3]{LadyzenskajaEtAlLinearQuasilinearEquations1998}
  we may find $\tau_2 \lt \tau_3 \lt \tau_4 \lt \tau$ and $0 \lt \gamma_4 \lt \gamma_3 \lt \gamma_2$ such that
  \begin{align*}
    \sup_{\eps \in (0, 1)} \|\we\|_{C^{1+\gamma_3, \frac{1+\gamma_3}{2}}(\ombar \times [\tau_3, T])} \lt \infty
  \end{align*}
  and
  \begin{align*}
    \sup_{\eps \in (0, 1)} \|\we\|_{C^{2+\gamma_4, 1+\frac{\gamma_4}{2}}(\ombar \times [\tau_4, T])} \lt \infty.
  \end{align*}

  Going back to $\ue$ and $\ve$ this indeed gives \eqref{eq:ue_ve_c2_bdd:statement}.
\end{proof}

\begin{lemma} \label{lm:ue_ve_conv}
  There exist $\wh u, \wh v \in C^2(\ombar \setminus \{0\} \times (0, \infty))$ and
  a sequence $(\eps_j)_{j \in \N} \subset (0, 1)$ with $\eps_j \sea 0$ as well as
\begin{align*}
  \uej \ra \wh u
  \quad \text{and} \quad
  \vej \ra \wh v
  \qquad \text{in $C_{\textrm{loc}}^2(\ombar \setminus \{0\} \times (0, \infty))$ as $j \ra \infty$}.
\end{align*}
\end{lemma}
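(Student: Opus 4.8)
The plan is to extract a convergent subsequence from the family $\{(\ue, \ve)\}_{\eps \in (0, 1)}$ by invoking the uniform $C^{2+\gamma, 1+\frac\gamma2}$-bounds furnished by Lemma~\ref{lm:ue_ve_c2_bdd} together with a diagonal argument over an exhaustion of $\ombar \setminus \{0\} \times (0, \infty)$ by the compact sets $K$. First I would fix a countable exhaustion, say $K_k \defs \left(\ombar \setminus B_{1/k}(0)\right) \times [\tfrac1k, k]$ for $k \in \N$ large enough that $\tfrac1k \lt R$, noting that each compact subset of $\ombar \setminus \{0\} \times (0, \infty)$ is eventually contained in some $K_k$ and that $\bigcup_k K_k = \ombar \setminus \{0\} \times (0, \infty)$.

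For each fixed $k$, Lemma~\ref{lm:ue_ve_c2_bdd} (applied with $\delta = \frac1k$, $\tau = \frac1k$, $T = k$) yields $C_k \gt 0$ and $\gamma_k \in (0, 1)$ with
\begin{align*}
  \sup_{\eps \in (0, 1)} \left( \|\ue\|_{C^{2+\gamma_k, 1+\frac{\gamma_k}2}(K_k)} + \|\ve\|_{C^{2+\gamma_k, 1+\frac{\gamma_k}2}(K_k)} \right) \le 2 C_k.
\end{align*}
Since the embedding $C^{2+\gamma_k, 1+\frac{\gamma_k}2}(K_k) \embed C^{2, 1}(K_k)$ is compact (the H\"older bound on the highest derivatives gives equicontinuity, so Arzel\`a--Ascoli applies to $\ue, \nabla \ue, D^2 \ue, \uet$ and likewise for $\ve$), any sequence $\eps_j \sea 0$ admits a subsequence along which $\uej$ and $\vej$ converge in $C^{2, 1}(K_k)$. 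I would carry out the standard diagonal extraction: first pass to a subsequence converging on $K_1$, then a further subsequence converging on $K_2$, and so on, finally taking the diagonal sequence $(\eps_j)_{j \in \N}$, which converges in $C^{2, 1}(K_k)$ for every $k$. Denote the limits by $\wh u$ and $\wh v$; they are well-defined on all of $\ombar \setminus \{0\} \times (0, \infty)$ because the $K_k$ exhaust this set and the limits on overlapping compacta agree. Convergence in $C^{2, 1}(K_k)$ for every $k$ is precisely convergence in $C^2_{\mathrm{loc}}(\ombar \setminus \{0\} \times (0, \infty))$, and since each limit inherits the $C^2$-regularity of the converging sequence locally, we obtain $\wh u, \wh v \in C^2(\ombar \setminus \{0\} \times (0, \infty))$.

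The only genuinely delicate point is verifying that the relevant norm bounds are $\eps$-uniform, but this is exactly the content of Lemma~\ref{lm:ue_ve_c2_bdd} — the uniformity there rests on the $\eps$-independent pointwise estimates from Lemma~\ref{lm:pw_ks_g} applied to the regularized system \eqref{eq:lm:ue_ve_local_ex:prob}, which is why that lemma was stated for the slightly more general problem \eqref{prob:ks_gu} with $g = \Ge$. Everything else is soft: the diagonalization and the passage from $C^{2+\gamma}$ to $C^2$ via Arzel\`a--Ascoli are routine, so I expect the main (already discharged) obstacle to be the uniform regularity, and the present lemma to follow essentially by compactness and diagonal extraction.
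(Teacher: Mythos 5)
Your argument is correct and is exactly the paper's proof: the paper disposes of this lemma in one line as "Lemma~\ref{lm:ue_ve_c2_bdd}, the Arzel\`a--Ascoli theorem and a diagonalization argument," which is precisely the compactness-plus-diagonal-extraction scheme you carry out in detail. Your elaboration (exhaustion by the compacta $K_k$, compact embedding of $C^{2+\gamma,1+\frac{\gamma}{2}}$ into $C^{2,1}$, consistency of limits on overlaps) is the standard filling-in of that one-line proof and contains no gaps.
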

\begin{proof}
  This follows directly from Lemma~\ref{lm:ue_ve_c2_bdd}, the Arzel\`a--Ascoli theorem and a diagonalization argument.
\end{proof}

\begin{lemma} \label{lm:ue_eq_u}
  There exists $\eps_0 \gt 0$ such that
  \begin{align*}
    T_\eps \defs \sup \left\{T \in (0, \tmax) \colon u \le \frac1\eps \text{ in $\ombar \times [0, T]$}\right\}
  \end{align*}
  is well-defined for all $\eps \in (0, \eps_0)$
  and for all $\eps \in (0, \eps_0)$
  \begin{align*}
    \ue = u
    \quad \text{and} \quad
    \ve = v
  \end{align*}
  holds in $\ombar \times [0, T_\eps)$.
\end{lemma}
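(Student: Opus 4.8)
The plan is to first confirm that $T_\eps$ is well-defined for small $\eps$, then to observe that on $[0, T_\eps)$ the original pair $(u, v)$ itself solves the regularized system \eqref{eq:lm:ue_ve_local_ex:prob}, and finally to invoke the uniqueness statement Lemma~\ref{lm:unique} in order to identify it with $(\ue, \ve)$.

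First I would fix $\eps_0 \defs \min\{1, 1/\|u_0\|_{\leb\infty}\}$ (with the convention $1/0 \defs \infty$). For $\eps \in (0, \eps_0)$ we then have $\|u_0\|_{\leb\infty} \lt \frac1\eps$, so since $u \in C^0(\ombar \times [0, \tmax))$ with $u(\cdot, 0) = u_0$, continuity furnishes some $T \gt 0$ with $u \le \frac1\eps$ on $\ombar \times [0, T]$. Hence the set defining $T_\eps$ is nonempty and, being contained in $(0, \tmax)$, is bounded above, so $T_\eps$ is well-defined. (The blow-up hypothesis $\limsup_{t\nea\tmax}\|u(\cdot, t)\|_{\leb\infty} = \infty$ even forces $T_\eps \lt \tmax$, but this is not needed here.)

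Next I would note that, by the definition of $T_\eps$, one has $u \le \frac1\eps$ throughout $\ombar \times [0, T_\eps)$, whence $G_\eps(u) = u$ there because $G_\eps$ coincides with the identity on $[0, \frac1\eps]$. Consequently the first two equations of \eqref{prob:ks} read verbatim as the first two equations of \eqref{eq:lm:ue_ve_local_ex:prob} on $\Omega \times [0, T_\eps)$. The only point requiring care is the boundary condition: \eqref{eq:lm:ue_ve_local_ex:prob} demands $\partial_\nu \ue = \partial_\nu \ve = 0$, while \eqref{prob:ks} provides $\partial_\nu v = 0$ together with the flux condition $(D(u,v)\nabla u - S(u,v)\nabla v)\cdot\nu = 0$. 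Since $\partial_\nu v = 0$ collapses the flux condition to $D(u,v)\,\partial_\nu u = 0$ on $\partial\Omega \times (0, T_\eps)$, the nondegeneracy assumption $D \ge \eta \gt 0$ from \eqref{eq:profile_ks:non_degen} yields $\partial_\nu u = 0$. Together with $u(\cdot, 0) = u_0$ and $v(\cdot, 0) = v_0$, this shows that $(u, v)$ is a classical solution of \eqref{eq:lm:ue_ve_local_ex:prob} on $[0, T_\eps)$.

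Finally, both $(u, v)$ and $(\ue, \ve)$ solve \eqref{eq:lm:ue_ve_local_ex:prob} on $\Omega \times [0, T_\eps)$ with identical data $(u_0, v_0)$. Because $G_\eps$ is bounded, the sensitivity $\rho \mapsto S(G_\eps(\rho), \sigma)$ is bounded and the source $G_\eps(\ue)$ in the second equation is bounded, while $D \ge \eta$ renders the first equation uniformly parabolic; thus the regularized system lies in the class to which Lemma~\ref{lm:unique} applies. Invoking it on $[0, T]$ for each $T \lt T_\eps$ and taking the union gives $\ue = u$ and $\ve = v$ on $\ombar \times [0, T_\eps)$. The step I expect to be most delicate is checking that \eqref{eq:lm:ue_ve_local_ex:prob} genuinely meets the structural hypotheses of Lemma~\ref{lm:unique} after the composition with $G_\eps$ (in particular the required boundedness and regularity of the modified sensitivity), so that uniqueness may be legitimately applied; by comparison, the reduction $G_\eps(u) = u$ and the conversion of the flux condition into a homogeneous Neumann condition are elementary.
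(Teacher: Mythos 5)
Your proof is correct and follows essentially the same route as the paper: establish well-definedness of $T_\eps$ by continuity of $u$ (the paper instead excludes $u_0 \equiv 0$ via Lemma~\ref{lm:unique} and sets $\eps_0 \defs \frac{1}{2\|u_0\|_{\leb\infty}}$), and then identify $(u, v)$ with $(\ue, \ve)$ on $\ombar \times [0, T_\eps)$ by the uniqueness statement of Lemma~\ref{lm:unique}. If anything, your write-up is the more careful one: you verify that $(u,v)$ itself solves the regularized problem \eqref{eq:lm:ue_ve_local_ex:prob} (using $G_\eps(u) = u$ on $[0, T_\eps)$ and converting the flux condition of \eqref{prob:ks} into $\partial_\nu u = 0$ via $D \ge \eta$), whereas the paper asserts that both pairs solve \eqref{prob:ks}, which for $(\ue, \ve)$ would require knowing a priori that $\ue \le \frac1\eps$ there.
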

\begin{proof}
  As $u_0 \equiv 0$ would imply $u \equiv 0$ by Lemma~\ref{lm:unique}
  we may without loss of generality assume $u_0 \not\equiv 0$.
  Then $\eps_0 \defs \frac{1}{2\|u_0\|_{\leb\infty}} \gt 0$ and as $u$ is continuous $T_\eps$ is indeed well-defined for all $\eps \in (0, \eps_0)$.

  Let $\eps \in (0, \eps_0)$.
  In $\ombar \times [0, T_\eps)$ both $(u, v)$ and $(u_\eps, v_\eps)$ are solutions to \eqref{prob:ks} with $T = T_\eps$,
  such that the statement follows due to uniqueness, see Lemma~\ref{lm:unique} below.
\end{proof}

With these preparations at hand, we may now prove Theorem~\ref{th:profile_ks}.
\begin{proof}[Proof of Theorem~\ref{th:profile_ks}]
  Let $\wh u, \wh v$ be given by Lemma~\ref{lm:ue_ve_conv}.
  Since also $\ue \ra u$ and $\ve \ra v$ pointwise (as $\eps \sea 0$) by Lemma~\ref{lm:ue_eq_u},
  we have $u = \wh u$ and $v = \wh v$ in $\ombar \setminus \{0\} \times [0, \tmax)$.

  Because of $\wh u, \wh v \in C^0([0, \tmax]; C^2_{\mathrm{loc}}(\Omega \setminus \{0\}))$
  a consequence thereof is \eqref{eq:profile_ks:u_v_conv}
  if we set $U \defs \wh u(\cdot, \tmax)$ and $V \defs \wh v(\cdot, \tmax)$.
  Finally, \eqref{eq:profile_ks:profile} follows by Theorem~\ref{th:pw_ks}.
\end{proof}

\appendix
\section{Uniqueness in nondegenerate quasilinear Keller--Segel systems} 
As most of the works on quasilinear Keller--Segel systems cited in the introduction do not state whether the solution is unique,
a uniqueness result for quite general systems,
also accounting, for instance, for cell proliferation or consumption of chemicals, 
might be of independent interest.

Since these generalizations do not drastically complicate or enlarge the proof,
we choose to prove a version slightly more general than actually needed for our purposes.

\begin{lemma} \label{lm:unique}
  Suppose $\Omega \subset \R^n$, $n \in \N$, is a smooth, bounded domain.
  Let $\eta \gt 0$, $p \gt \max\{2, n\}$, $T \in (0, \infty]$ as well as
  $D, S, f, g \in C^1([0, \infty)^2)$
  with $D \ge \eta$. 
  Furthermore, assume also that $u_0, v_0 \in \sob1p$ are nonnegative.

  Then there exists at most one pair of nonnegative functions
  \begin{align*}
    (u, v) \in
    \left( C^{2, 1}(\ombar \times (0, T)) \cap C^0([0, T); \sob1p) \right)^2
  \end{align*}
  solving
  \begin{align*}
    \begin{cases}
      u_t = \nabla \cdot (D(u, v) \nabla u - S(u, v) \nabla v) + f(u, v), & \text{in $\Omega \times (0, T)$}, \\
      v_t = \Delta v + g(u, v),                                           & \text{in $\Omega \times (0, T)$}, \\
      \partial_\nu u = \partial_\nu v = 0,                                & \text{on $\partial \Omega \times (0, T)$}, \\
      u(\cdot, 0) = u_0, v(\cdot, 0) = v_0,                               & \text{in $\Omega$}
    \end{cases}
  \end{align*}
  classically.
\end{lemma}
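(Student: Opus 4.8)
The plan is to argue by an energy method. Suppose $(u_1, v_1)$ and $(u_2, v_2)$ are two such solution pairs, write $u \defs u_1 - u_2$ and $v \defs v_1 - v_2$, and aim to show that a suitable functional of $(u, v)$ vanishes. First I would record that, since $p \gt n$, the embedding $\sob1p \embed \con0$ together with the regularity $(u_i, v_i) \in C^0([0, T); \sob1p)$ entails that $u_1, u_2, v_1, v_2$ are bounded in $\leb\infty$ and $\nabla u_2, \nabla v_2$ are bounded in $\leb p$ on each subinterval $[0, T']$ with $T' \lt T$, with bounds uniform up to $t = 0$; combined with $D, S, f, g \in C^1$ this makes all the nonlinear differences locally Lipschitz. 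Subtracting the two systems, I would test the difference of the $u$-equations with $u$, the difference of the $v$-equations with $v$, and additionally the difference of the $v$-equations with $-\Delta v$, all of which is justified on $(0, T')$ by the $C^{2,1}$-regularity there and by the Neumann conditions.

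In the $u$-testing the diffusion term splits as $-\intom D(u_1, v_1) |\nabla u|^2 - \intom (D(u_1, v_1) - D(u_2, v_2)) \nabla u_2 \cdot \nabla u$; the first summand is dominated by $-\eta \intom |\nabla u|^2$ thanks to the nondegeneracy $D \ge \eta$, while the second, after the Lipschitz bound $|D(u_1, v_1) - D(u_2, v_2)| \le C(|u| + |v|)$ and Young's inequality, contributes remainders of the form $\intom (|u|^2 + |v|^2) |\nabla u_2|^2$. The chemotaxis term splits analogously into a part $\intom S(u_1, v_1) \nabla v \cdot \nabla u$, controlled by Young against the dissipation at the cost of a term $C \intom |\nabla v|^2$, and a part involving $\nabla v_2$ that is treated exactly like the diffusion remainder. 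The reaction differences are immediately bounded by $C(|u| + |v|)$, and the $v$-testings produce analogous, milder, terms.

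The crucial point is to absorb the quadratic-gradient remainders $\intom (|u|^2 + |v|^2)(|\nabla u_2|^2 + |\nabla v_2|^2)$. Here I would apply Hölder's inequality with exponents $\frac p2, \frac p{p-2}$ to bound each such term by $\|\nabla u_2\|_{\leb p}^2 \|u\|_{\leb r}^2$ (and likewise for $v$ and $\nabla v_2$), where $r \defs \frac{2p}{p-2}$. The hypothesis $p \gt \max\{2, n\}$ is precisely what forces $r \lt \frac{2n}{(n-2)_+}$, so Lemma~\ref{lm:ehrling} with $s = 2$ applies and gives $\|u\|_{\leb r} \le \eps \|\nabla u\|_{\leb 2} + C(\eps) \|u\|_{\leb 2}$; choosing $\eps$ small then absorbs the gradient into $-\eta \intom |\nabla u|^2$ and leaves only $C \|u\|_{\leb 2}^2$ (and the analogous $v$-terms into their own dissipations).

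Setting $y \defs \|u\|_{\leb 2}^2 + \|v\|_{\leb 2}^2 + \|\nabla v\|_{\leb 2}^2$, the tested identities combine into a differential inequality $y' \le C y$ on $(0, T')$ — the $-\Delta v$ testing supplying the dissipation $-\|\Delta v\|_{\leb2}^2$ that lets the leftover $C \intom |\nabla v|^2$ be treated as a state variable bounded by $C y$. Since $y$ is continuous on $[0, T']$ with $y(0) = 0$ (the initial data coincide), Gronwall's inequality forces $y \equiv 0$, hence $u_1 = u_2$ and $v_1 = v_2$ on $[0, T']$, and letting $T' \nea T$ yields uniqueness on $[0, T)$. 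The main obstacle is exactly the nonlinear diffusion: because only $\leb p$- rather than $\leb\infty$-bounds on $\nabla u_2, \nabla v_2$ are available uniformly up to $t = 0$, the Gagliardo--Nirenberg interpolation of Lemma~\ref{lm:ehrling} at the borderline exponent $r \lt \frac{2n}{(n-2)_+}$ — available precisely because $p \gt \max\{2, n\}$ — together with the nondegeneracy $D \ge \eta$ is indispensable for absorbing these remainders into the dissipation.
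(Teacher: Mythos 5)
Your proposal is correct and, at its core, identical to the paper's argument: both proofs test the difference of the $u$-equations with $u_1-u_2$, use the nondegeneracy $D \ge \eta$ for the dissipation, convert the differences of the nonlinearities into Lipschitz bounds via the mean value theorem, estimate the resulting remainders $\intom\left(|u_1-u_2|^2+|v_1-v_2|^2\right)|\nabla u_2|^2$ by H\"older's inequality with exponents $\frac p2, \frac p{p-2}$ followed by Lemma~\ref{lm:ehrling} with $r=\frac{2p}{p-2}\lt\frac{2n}{(n-2)_+}$ (exactly where $p\gt\max\{2,n\}$ enters), and conclude with Gr\"onwall's inequality on $[0,T']$ before letting $T'\nea T$. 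The one genuine divergence is the bookkeeping of the leftover terms $C\intom|\nabla(v_1-v_2)|^2$ produced by the cross-diffusion term and by the absorptions: the paper works with the weighted functional $\intom(u_1-u_2)^2+\lambda\intom(v_1-v_2)^2$ and chooses $\lambda\defs c_1+\frac\eta4$ precisely so that $\lambda$ times the dissipation $-\intom|\nabla(v_1-v_2)|^2$, coming from testing the $v$-equation with $v_1-v_2$, swallows those terms; you instead enlarge the energy to include $\|\nabla(v_1-v_2)\|_{\leb2}^2$ and generate its evolution by a third testing with $-\Delta(v_1-v_2)$. Both routes close the argument, and yours trades the careful choice of weight for an additional testing. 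A small caveat about your version: differentiating $\intom|\nabla(v_1-v_2)|^2$ in time and integrating by parts presupposes some surrogate for the mixed derivative $\nabla v_t$, which the stated regularity $C^{2,1}(\ombar\times(0,T))\cap C^0([0,T);\sob1p)$ does not directly provide; this can be repaired by a standard mollification-in-time argument, but the paper's weighted-energy device sidesteps the issue entirely by never testing with anything beyond the zeroth-order differences themselves.
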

\begin{proof}
  Suppose that $(u_1, v_1)$ and $(u_2, v_2)$ are two such solutions and let $T' \in (0, T)$.

  Due to the supposed regularity and the embedding $\sob1p \embed \con0$ we can find $L \gt 0$ such that
  $u_1, u_2, v_1, v_2 \le L$ in $\ombar \times [0, T']$.

  As then
  \begin{align*}
          (u_1 - u_2)_t
    &=    \nabla \cdot (D(u_1, v_1) \nabla u_1 - S(u_1, v_1) \nabla v_1) + f(u_1, v_1) \\
    &\pe  - \nabla \cdot D(u_2, v_2) \nabla u_2 + S(u_2, v_2) \nabla v_2) - f(u_2, v_2) \\
    &=    \nabla \cdot (D(u_1, v_1) \nabla (u_1 - u_2))
          + \nabla \cdot ((D(u_1, v_1) - D(u_2, v_2)) \nabla u_2) \\
    &\pe  - \nabla \cdot (S(u_1, v_1) \nabla (v_1 - v_2))
          - \nabla \cdot ((S(u_1, v_1) - S(u_2, v_2)) \nabla v_2) \\
    &\pe  + f(u_1, v_1) - f(u_2, v_2)
  \end{align*}
  in $\Omega \times (0, T')$,
  testing with $u_1 - u_2$ and integrating by parts gives
  \begin{align*}
          \frac12 \ddt \intom (u_1 - u_2)^2
    &=    - \intom D(u_1, v_1) |\nabla (u_1 - u_2)|^2 \\
    &\pe  - \intom [D(u_1, v_1) - D(u_2, v_2)] \nabla u_2 \cdot \nabla (u_1 - u_2) \\
    &\pe  + \intom S(u_1, v_1) \nabla (v_1 - v_2) \cdot \nabla (u_1 - u_2) \\
    &\pe  + \intom [S(u_1, v_1) - S(u_2, v_2)] \nabla v_2 \cdot \nabla (u_1 - u_2) \\
    &\pe  + \intom [f(u_1, v_1) - f(u_2, v_2)] (u_1 - u_2) \\
    &\sfed I_1 + I_2 + I_3 + I_4 + I_5
  \end{align*}
  in $(0, T')$.

  Therein we make first use of the nondegeneracy, that is, the crucial assumption that $D \ge \eta$, to see that
  \begin{align*}
    I_1 \le -\eta \intom |\nabla (u_1 - u_2)|^2
  \end{align*}
  holds in $(0, T')$.

  Also, by Young's inequality
  \begin{align*}
        I_3
    \le \frac{\eta}{4} \intom |\nabla (u_1 - u_2)|^2
        + c_1 \intom |\nabla (v_1 - v_2)|^2
  \end{align*}
  holds in $(0, T')$, where $c_1 \defs \frac{\|S\|_{C^0([0, L]^2)}^2}{\eta}$.

  By the mean value theorem we can find $\xi_1, \xi_2 \colon \Omega \times (0, T') \ra [0, L]$ such that
  \begin{align*}
          |D(u_1, v_1) - D(u_2, v_2)|
    &\le  |D(u_1, v_1) - D(u_2, v_1)| + |D(u_2, v_1) - D(u_2, v_2)| \\
    &=    |D_u(\xi_1, v_1) (u_1 - u_2)| + |D_v(u_2, \xi_2) (v_1 - v_2)| \\
    &\le  \|D\|_{C^1([0, L]^2)} \left( |u_1 - u_2| + |v_1 - v_2| \right)
  \end{align*}
  in $\Omega \times (0, T')$,
  where $\|\varphi\|_{C^1([0, L]^2)} \defs
  \max\{\|\varphi\|_{C^0([0, L]^2)}, \|\varphi_u\|_{C^0([0, L]^2)}, \|\varphi_v\|_{C^0([0, L]^2)}\}$
  for $\varphi \in C^1([0, L]^2)$.

  Thus, by Young's and Hölder's inequalities (with exponents $\frac p2, \frac{p}{p-2}$)
  \begin{align*}
          I_2
    &\le  \frac{\eta}{8} \intom |\nabla (u_1 - u_2)|^2
          + c_2 \left( \intom |\nabla u_2|^2 (u_1 - u_2)^2 + \intom |\nabla u_2|^2 (v_1 - v_2)^2 \right) \\
    &\le  \frac{\eta}{8} \intom |\nabla (u_1 - u_2)|^2
          + c_3 \left( \intom (u_1 - u_2)^\frac{2p}{p-2} \right)^\frac{p-2}{p}
          + c_3 \left( \intom (v_1 - v_2)^\frac{2p}{p-2} \right)^\frac{p-2}{p}
  \end{align*}
  in $(0, T')$ with $c_2 \defs \frac{4\|D\|_{C^1([0, L]^2)}^2}{\eta}$
  and $c_3 \defs c_2 \|\nabla u_2\|_{L^\infty((0, T'); \sob1p)}^2$.

  As our assumptions on $p$ imply $r \defs \frac{2p}{p-2} \lt \frac{2n}{(n-2)_+}$,
  we may invoke Lemma~\ref{lm:ehrling} to find $c_4 \gt 0$ with the property that
  \begin{align*}
        \left( \intom |\varphi|^r \right)^\frac2r
    \le \frac{\eta}{8c_3} \intom |\nabla \varphi|^2 
        + c_4 \intom \varphi^2
    \quad \text{for all $\varphi \in \sob12$},
  \end{align*}
  hence
  \begin{align*}
          I_2
    &\le  \frac{\eta}{4} \intom |\nabla (u_1 - u_2)|^2
          + \frac{\eta}{8} \intom |\nabla (v_1 - v_2)|^2
          + c_5 \intom (u_1 - u_2)^2
          + c_5 \intom (v_1 - v_2)^2
  \end{align*}
  in $(0, T')$, where $c_5 \defs c_3 c_4$.

  Similarly, we see that
  \begin{align*}
          I_4
    &\le  \frac{\eta}{4} \intom |\nabla (u_1 - u_2)|^2
          + \frac{\eta}{8} \intom |\nabla (v_1 - v_2)|^2
          + c_6 \intom (u_1 - u_2)^2
          + c_6 \intom (v_1 - v_2)^2
  \end{align*}
  in $(0, T')$ for some $c_6 \gt 0$.

  As again by the mean value theorem
  \begin{align*}
    |f(u_1, v_1) - f(u_2, v_2)| \le \|f\|_{C^1([0, L]^2)} \left( |u_1 - u_2| + |v_1 - v_2| \right)
  \end{align*}
  in $\Omega \times (0, T')$,
  we conclude
  \begin{align*}
          I_5 
    \le   c_7 \intom (u_1 - u_2)^2
          + c_8 \intom (v_1 - v_2)^2
  \end{align*}
  in $(0, T')$,
  where $c_7 \defs \frac32 \|f\|_{C^1([0, L]^2)}$ and $c_8 \defs \frac12 \|f\|_{C^1([0, L]^2)}$.

  Moreover, 
  \begin{align*}
          \frac12 \ddt \intom (v_1 - v_2)^2
    &\le  - \intom |\nabla (v_1 - v_2)|^2
          + \intom (g(u_1, v_1) - g(u_2, v_2)) (v_1 - v_2)
  \end{align*}
  in $(0, T')$.
  
  Therein we make once more use of the mean value theorem to see that
  \begin{align*}
    |g(u_1, v_1) - g(u_2, v_2)| \le \|g\|_{C^1([0, L]^2)} \left( |u_1 - u_2| + |v_1 - v_2| \right)
  \end{align*}
  in $\Omega \times (0, T')$, hence
  \begin{align*}
        \intom (g(u_1, v_1) - g(u_2, v_2)) (v_1 - v_2)
    \le c_9 \intom (u_1 - u_2)^2
        + c_{10} \intom (v_1 - v_2)^2
  \end{align*}
  in $(0, T')$, where $c_9 \defs \frac12 \|g\|_{C^1([0, L]^2)}$ and $c_{10} \defs \frac32 \|g\|_{C^1([0, L]^2)}$.

  By combining the above estimates,
  we obtain with $\lambda \defs c_1 + \frac{\eta}{4}$ and some $c_{11} \gt 0$
  \begin{align*}
        \ddt \left( \intom (u_1 - u_2)^2 + \lambda \intom (v_1 - v_2)^2 \right)
    \le c_{11} \left( \intom (u_1 - u_2)^2 + \lambda \intom (v_1 - v_2)^2 \right)
  \end{align*}
  in $(0, T')$, hence
  \begin{align*}
        \intom (u_1 - u_2)^2(\cdot, t) + \lambda \intom (v_1 - v_2)^2(\cdot, t)
    \le \ure^{c_{11} t} \left( \intom (u_0 - u_0)^2 + \lambda \intom (v_0 - v_0)^2 \right)
    =   0.
  \end{align*}
  for $t \in [0, T']$ by Gr\"onwall's inequality.

  Since $u_1, u_2, v_1, v_2 \in C^0(\ombar \times [0, T'])$,
  this implies $u_1 \equiv u_2$ and $v_1 \equiv v_2$ in $\ombar \times [0, T']$.
  The statement follows upon taking $T' \nea T$.
\end{proof}

\section*{Acknowledgments}
The author is partially supported by the German Academic Scholarship Foundation
and by the Deutsche Forschungsgemeinschaft within the project \emph{Emergence of structures and advantages in
cross-diffusion systems}, project number 411007140.

\footnotesize
\newcommand{\noopsort}[1]{}

\end{document}